\numberwithin{equation}{section}
\def\s{\,\,\,\,}
\numberwithin{equation}{section}
\newtheorem{theorem}{Theorem}[section]
\newtheorem{lem}[theorem]{Lemma}
\newtheorem{thm}[theorem]{Theorem}
\newtheorem{pro}[theorem]{Proposition}
\newtheorem{rem}[theorem]{Remark}
\newcounter{Cnumber}
\title[ ]
{\bf Higher order geometric flow of hypersurfaces in a Riemannian manifold}
\author[ ]
{Zonglin Jia\quad\quad Youde Wang}
\begin{document}
\maketitle

\begin{abstract}
In this paper, we consider the high order geometric flows of a submanifolds $M$ in a complete Riemannian manifold $N$ with $\dim(N)=\dim(M)+1=n+1$, which were introduced by Mantegazza in the case the ambient space is an Euclidean space, and extend some results due to Mantegazza to the present situation under some assumptions on $N$. Precisely, we show that if $m\in\mathbb{N}$ is strictly larger than the integer part of $n/2$ and $\varphi(t)$ is a immersion for all $t\in[0,T)$ and if $\mathfrak{F}_m(\varphi_0)$ is bounded by a constant which relies on the injectivity radius $\bar{R}>0$ and sectional curvature $\bar{K}_{\pi}(\bar{K}_{\pi}\leqslant1)$ of $N$ , then $T$ must be $\infty$.
\end{abstract}

\section{Introduction}
Let $N$ is a complete Riemannian manifold with metric $\langle\cdot|\cdot\rangle_N$. We represent orientable sub-manifolds $M$ as immersions $\Phi:M\longrightarrow N$ with   $\dim(M)=\dim(N)-1=n$. For a compact, orientable, $n$-dimensional manifold $M$ without boundary and an immersion $\Phi:M\longrightarrow(N,\langle\cdot|\cdot\rangle_N)$, we define
\[
\mathfrak{F}_m(\Phi):=\int_M(1+|\dot{\nabla}^m\Upsilon|^2)\,d\mu,
\]
where $\Phi:M\longrightarrow(N,\langle\cdot|\cdot\rangle_N)$ is an immersion with codimension 1; $\Upsilon$ is the unit normal vector field of the submanifolds $\Phi(M)$ and $\dot{\nabla}$ is the connection induced by $\Phi$. Here, $\mu$ and $\dot{\nabla}$ are the canonical measure and Riemannian connection of $(M,g)$, where the metric $g$ is obtained by pulling back on $M$ the metric $\langle\cdot|\cdot\rangle_N$ of $N$ with $\Phi$. The symbol $\dot{\nabla}^m$ denotes the $m^{th}$ iterated covariant derivative.

In local charts we have $g_{ij}:=\langle\dot{\nabla}_i\Phi|\dot{\nabla}_j\Phi\rangle$, $(g^{ij}):=(g_{ij})^{-1}$,
\[d\mu:=\sqrt{det(g_{ij})}dx^1\wedge\cdots\wedge dx^n\]
and
\[|\dot{\nabla}^m\Upsilon|^2:=g^{i_1j_1}\cdots g^{i_mj_m}\langle(\dot{\nabla}^m\Upsilon)(\frac{\partial}{\partial x^{i_1}},\cdots,\frac{\partial}{\partial x^{i_m}})|(\dot{\nabla}^m\Upsilon)(\frac{\partial}{\partial x^{j_1}},\cdots,\frac{\partial}{\partial x^{j_m}})\rangle.\]

It is easy to see that $\mathfrak{F}_1(\gamma)=\int_{\mathbb{S}^1}(1+\kappa^2)\,d\mathbb{S}^1$ as $M=\mathbb{S}^1$ (the unit circle) and $N=\mathbb{R}^2$, since the curvature $\kappa$ of a curve $\gamma:\mathbb{S}^1\longrightarrow\mathbb{R}^2$ satisfies $\kappa^2=|\nabla\nu|^2$. If $\gamma(t)$ is an closed immersed curve in $N$ then it has velocity vector $V = \nu T$ and squared geodesic curvature
$$\kappa^2=\|\nabla_T T\|^2.$$
Hence, the functional can be written as
 $$\mathfrak{F}_1(\gamma)=\int_{\mathbb{S}^1}(1+\kappa^2)\,d\mathbb{S}^1=\int_{\mathbb{S}^1}(1+\|\nabla_T T\|^2)\,d\mathbb{S}^1.$$

It is well-known that the functional $$F_1(\gamma)=\int_{\mathbb{S}^1}\kappa^2\,d\mathbb{S}^1$$ is just the total squared curvature functional of a elastic rod on which there is a long research history, for more details we refer to \cite{Sin} and references therein. When the length of curve flow is fixed, one usually calls the flow corresponding to the total squared curvature functional as curve-straightening flow. The negative gradient flow for the total squared curvature defined on curves has been widely studied in the literature. By virtue of a smoothing effect of the functional, there are various results such that the flow has a smooth solution for all times and subconverges to a (possibly nonunique) stationary solution(see \cite{NO, W} and references therein).

Simonett \cite{Si} also discussed the gradient flow of the Willmore functional (see \cite{Wi}) defined on surfaces immersed in $\mathbb{R}^3$. In \cite{KS1} and \cite{KS2}, Kuwert and Sch\"{a}tzle studied the global existence and regularity of the negative gradient flow of the Willmore functional for general initial data, where Willmore functional
\begin{eqnarray*}
\mathcal{W}(\varphi)=\int_M|A|^2\,d\mu
\end{eqnarray*}
is defined on surfaces immersed in $\mathbb{R}^3$ and $|A|=|\nabla\nu|$.

Similarly, the negative gradient flow of $$\mathfrak{F}_1(\gamma)=\int_{\mathbb{S}^1}(1+\kappa^2)\,d\mathbb{S}^1$$ is also fourth order curve flows. Now, the length of curve flow can not be fixed. For this case, the global regularity of the flow was shown by Polden in \cite{Po1} and \cite{Po2}.

Later, Mantegazza considered the negative gradient flow corresponding to $\mathfrak{F}_m(\Phi)$ where $m>1$, and proved that if the order of derivation $m \in \mathbb{N}$ is strictly larger than the integer part of $n/2$ then the singularities of this flow in finite time cannot occur during the evolution.

Actually, there have been many important works on fourth order flows of a slightly different character, from Willmore flow of surfaces to Calabi flow, a fourth order flow of metrics. Besides the above cited works of Polden, Wen and Mantegazza, we would like to quote some related work with geometric flows of high order.

Escher, Mayer and Simonett \cite{EMS} studied the surface diffusion flow (see also the references therein)
$$\frac{\partial\phi}{\partial t} = (\Delta_t H)\nu.$$
Recently, Wheeler in \cite{W} also considered closed immersed hypersurfaces evolving by surface diffusion flow, and perform an analysis based on local and global integral estimates. He showed that a properly immersed stationary ($H \equiv 0$) hypersurface in $\mathbb{R}^3$ or $\mathbb{R}^4$ with restricted growth of the curvature at infinity and small total trace-free curvature must be an embedded union of umbilic hypersurfaces. Furthermore, he show that if a singularity develops the curvature must concentrate in a definite manner, and prove that a blowup under suitable conditions converges to a nonumbilic embedded stationary surface. As a consequence, the surface diffusion flow of a surface initially close to a sphere in $L^2$ is a family of embeddings, exists for all time, and exponentially converges to a round sphere.

In the article \cite{Ch}, Chru\'{s}ciel has ever applied the global existence of a fourth order flow of metrics on a two-dimensional Riemannian manifold to the construction of solutions of Einstein vacuum equations representing an isolated gravitational system.
\medskip

In this paper we consider the negative gradient flow $\varphi:M\times[0,T)\longrightarrow N$ associated to the above functional $\mathfrak{F}_m(\varphi)$. Under some geometric assumptions on $N$, we will adopt the so-called geometric energy method to establish some similar results on the flow with that in \cite{M}. Precisely, we will show the following:
\begin{thm}\label{theorem1.1}
Let $M$ be a closed, $n$-dimensional orientable manifold and $(N,h)$ be a $(n+1)$-dimensional complete Riemannian manifold with $h=\langle\cdot|\cdot\rangle_N$. Denote the volume of the unit ball of $\mathbb{R}^n$ by $\omega_n$ and the injectivity radius of $N$ by $\bar{R}$. Suppose that $N$ is of bounded geometry, $\bar{R}>0$ and its sectional curvature $\bar{K}_{\pi}$ is smaller than or equals to $b^2$, where $b$ is a positive real number or a pure imaginary one and $b^2\leqslant1$. If the solution $\varphi$ of the negative gradient flow of $\mathfrak{F}_m$ with an initial immersion hypersurface $\varphi_0:M\longrightarrow N$ satisfies that for each $t$ in the maximal existence interval $[0,T_{\max})$, $\varphi(t)$ is an immersion, then the solution $\varphi$ is global, provided $m\geqslant[n/2]+1$ where $[n/2]$ is the integer part of $n/2$, and
\begin{eqnarray}\label{G:1}
\left\{
\begin{array}{llll}
\aligned
&\mathfrak{F}_m(\varphi_0)\leqslant\min\left\{\frac{\omega_n}{|b|^n(n+1)},\,\Big(\frac{b\bar{R}}{\pi}\Big)^n\frac{\omega_n}{n+1}\right\},\s \mbox{for}\s b\s real,\\
&\mathfrak{F}_m(\varphi_0)\leqslant\min\left\{\frac{\omega_n}{|b|^n(n+1)}, \, \frac{\bar{R}^n}{(n+1)2^n}\omega_n \right\}, \s\mbox{for}\s b \s imaginary.
\endaligned
\end{array}
\right.
\end{eqnarray}
\end{thm}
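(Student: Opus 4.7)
The plan is to follow the geometric energy method used by Mantegazza in the Euclidean ambient setting, adapted to accommodate the ambient curvature of $N$. I would argue by contradiction: assume $T_{\max} < \infty$ and derive uniform $C^{\infty}$ bounds on $\varphi(t)$ throughout $[0, T_{\max})$, which through a standard continuation criterion contradicts the maximality of $T_{\max}$. The starting ingredients are (i) short-time existence from quasilinear parabolic theory applied to the $2(m+1)$-order evolution equation associated to $\mathfrak{F}_m$, (ii) the monotonicity $\mathfrak{F}_m(\varphi(t)) \leq \mathfrak{F}_m(\varphi_0)$ coming directly from the negative-gradient structure, which yields uniform bounds on $\int_M d\mu_t$ and on $\|\dot{\nabla}^m \Upsilon\|_{L^2}^2$ on $[0, T_{\max})$, and (iii) a Hoffman-Spruck Sobolev inequality on the immersed hypersurface $\varphi(t)(M)$. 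The smallness condition (1.1) is calibrated precisely so that this Sobolev inequality applies with a constant uniform in $t$: the two branches of (1.1) correspond to the $b$ real and $b$ imaginary cases of the Hoffman-Spruck theorem, and involve $\omega_n$, the injectivity radius $\bar{R}$, and the sectional curvature bound $b^2$.

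Since $m \geq [n/2]+1 > n/2$, the Sobolev embedding $W^{m,2}(M) \hookrightarrow L^{\infty}(M)$ combined with Gagliardo-Nirenberg interpolation on submanifolds yields uniform $L^{\infty}$ and $L^{p}$ control on every covariant derivative $\dot{\nabla}^k \Upsilon$ with $k \leq m$, in particular on the second fundamental form $A \sim \dot{\nabla}\Upsilon$. From here I would perform a bootstrap: differentiate $\int_M |\dot{\nabla}^{m+k} \Upsilon|^2 \, d\mu$ in time for $k = 1, 2, \ldots$. Using the negative-gradient structure, the leading term is a good dissipation of the form $-\|\dot{\nabla}^{2m+k+1} \Upsilon\|_{L^2}^2$, while the lower-order contributions are polynomial in the intrinsic curvature, in $A$ and its derivatives, and in the ambient Riemann tensor of $N$ and its covariant derivatives. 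The ambient terms are controlled by the bounded-geometry hypothesis on $N$, the intrinsic terms by the interpolated $L^{\infty}$ estimates of the previous step, and the final absorption into the good term uses the smallness of $\mathfrak{F}_m(\varphi_0)$. This yields uniform bounds on $\|\dot{\nabla}^{m+k} \Upsilon\|_{L^2}$ for every $k \geq 1$. The uniform $L^{\infty}$ bound on $A$, together with the evolution equation for $g_{ij}$, gives uniform upper and lower bounds on $g(t)$, so the immersion stays non-degenerate; the resulting uniform smooth bounds then permit a smooth extension past $T_{\max}$, which is the desired contradiction.

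The main obstacle, relative to the Euclidean case treated in \cite{M}, is the bookkeeping of ambient curvature terms that appear in every commutator of covariant derivatives, in the Simons-type identity for $\Delta A$, and in the evolution equations for $g$, $\Upsilon$, $A$, and their iterated derivatives. These contributions, unlike the purely intrinsic ones, are not controlled by $|A|$ alone and must be estimated via the bounded geometry of $N$; they produce structurally new terms in the energy identities that were absent in \cite{M}. A second subtle point is verifying that the global smallness of $\mathfrak{F}_m(\varphi_0)$, rather than a local volume smallness as is customarily required by Hoffman-Spruck, suffices to apply the Sobolev inequality globally with a $t$-uniform constant; this requirement is exactly what pins down the three numerical thresholds appearing in (1.1), namely $\omega_n/[|b|^n(n+1)]$, $(b\bar{R}/\pi)^n \omega_n/(n+1)$, and $\bar{R}^n \omega_n/[(n+1)2^n]$.
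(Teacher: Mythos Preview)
Your overall architecture matches the paper's: contradiction argument, monotonicity of $\mathfrak{F}_m$, Hoffman--Spruck Sobolev inequality with constants made uniform by the smallness hypothesis (1.1), a differential inequality for $\int_M|\nabla^kA|^2\,d\mu_t$ with a dissipative leading term, Gronwall, and then a smooth extension past $T_{\max}$. The identification of the two technical obstacles (ambient-curvature commutator terms and the calibration of (1.1) to Hoffman--Spruck) is also accurate.

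There is, however, a genuine gap in your intermediate step. The claim that the functional bound plus $m>n/2$ yields \emph{uniform $L^\infty$ control on every $\dot\nabla^k\Upsilon$ with $k\le m$} is false: the functional only controls $\|\nabla^m\nu\|_{L^2}$, and Hamilton-type interpolation (Proposition~\ref{lemma5.2}) then gives $\|\nabla^k\nu\|_{L^{2m/k}}$, not $L^\infty$. To get $\|\nabla^kA\|_{L^\infty}$ from an $L^2$-based Sobolev estimate one would need $\|A\|_{W^{k+[n/2]+1,2}}$, which is not available a priori. Consequently, your proposed bootstrap mechanism---controlling the lower-order nonlinear terms by ``the interpolated $L^\infty$ estimates of the previous step'' and then absorbing ``using the smallness of $\mathfrak{F}_m(\varphi_0)$''---does not work as written. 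The paper instead proceeds purely in $L^p$: it bounds $\|H\|_{L^{n+1}}$ and $\|A\|_{L^{n+1}}$ uniformly (this is where $2m\ge n+1$ is used), applies the Gagliardo--Nirenberg inequality of Lemma~\ref{lemma5.11} with $r=n+1$ and $q=2$ to each factor $\nabla^iA$ in the remainder polynomial, and then performs an explicit exponent count (equations (6.69)--(6.71)) showing that the total power of $\|\nabla^{k+m+1}A\|_{L^2}$ appearing in the remainder is strictly less than $2$. Absorption is then by Young's inequality, with no smallness required at that stage; the role of the threshold (1.1) is solely to ensure the Hoffman--Spruck hypotheses (\ref{2.27})--(\ref{2.28}), so that the interpolation constants in Lemmas~\ref{lemma5.9}--\ref{lemma5.11} are uniform in $t$. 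Only \emph{after} the Gronwall argument has produced uniform $\|\nabla^kA\|_{L^2}$ bounds for all $k$ does one invoke (\ref{5.67}) to upgrade to $L^\infty$.
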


\begin{rem}\label{remark1.2}
$N$ is of bounded geometry if and only if, for any $\alpha\in\mathbb{N}$, there exists a constant $\bar{K}_{\alpha}\in\mathbb{R}^+$ such that for any tangent vector fields $X_1,\cdots,X_{\alpha}$, $Y$, $Z$, $W$ and $V$ on $N$, we have
\begin{align}\label{G:2}
|\langle(\nabla^{\alpha}R^N)(X_1,\cdots,X_{\alpha})(Y,Z)W|V\rangle_N|\leqslant\bar{K}_{\alpha}|X_1|\cdots|X_{\alpha}|\cdot|Y|\cdot|Z|\cdot|W|\cdot|V|.
\end{align}
\end{rem}

\begin{rem}\label{remark1.3}
\textbf{(1).} As $N$ is Euclidean space $\mathbb{R}^{n+1}$, it follows that $\bar{K}_{\pi}=0\leqslant\varepsilon^2$ for any $\varepsilon>0$ and $\bar{R}=\infty$. For any smooth enough $\varphi_0$, we may always choose $b=\varepsilon$ is small enough such that $\mathfrak{F}_m(\varphi_0)$ satisfies (\ref{G:1}) since $\bar{R}=\infty$. That is to say, when $N=\mathbb{R}^{n+1}$, we do not need to think about how big $\mathfrak{F}_m(\varphi_0)$ is. This property coincides with the result of \cite{M} and tells us that our conclusion is indeed stronger than that of Mantegazza.

\textbf{(2).} As $N$ is a Hadamard manifold of bounded geometry, we have $\bar{K}_{\pi}\leqslant0\leqslant\varepsilon^2$ and $\bar{R}=\infty$. For any smooth enough $\varphi_0$, (\ref{G:1}) is always satisfied.
\end{rem}

Denote the initial immersed manifold by $\varphi_0:M\longrightarrow N$. It is known that the analysis of the first variation of the functionals $\mathfrak{F}_m$ gives rise to a quasilinear system of partial differential equations on the manifold $M$ (see \cite{M}). The small time existence and uniqueness of a smooth negative gradient flow of $\mathfrak{F}_m(\varphi)$ is a particular case of a very general result of Polden proven in \cite{Po2}(also see \cite{HuP} and \cite{M1}).

We assume that such a negative gradient flow of the functional $\mathfrak{F}_m$ admits a unique solution defined on some interval $[0, T_{\dot{m}})$ and taking $\varphi_0$ as its initial value submanifold such that map $\varphi(t)=\varphi(\cdot,t)$ is an immersion for every $t\in [0, T_{\dot{m}})$. In the present situation, as in \cite{M} we need to establish suitable a priori estimates on the flow in order to obtain the long time existence.
\medskip

We will follow the route of \cite{M} to approach Theorem \ref{theorem1.1}, and take a contradiction argument. If the maximum extinction time $T_{\max}$ of the flow is not infinite, we try to apply Cauchy convergence rule to deduce contradiction. It requires that, for any $0\leqslant t_1<t_2<T_{\max}$, the derivatives of all orders of $\varphi(t_1)$ and ones of $\varphi(t_2)$ can be subtracted. Therefore, firstly we need to embed $N$ into $\mathbb{R}^{n+1+L}$. Denote the isometric embedding by $\Xi$. Let $\nabla$ be the connection induced by $\varphi(t)$(sometimes we also use $\nabla$ to denote the connection induced by $\varphi$) and $D$ denote the connection induced by $\Xi\circ\varphi(t)$(sometimes we also use $D$ to denote the connection induced by $\Xi\circ\varphi$).

Secondly, in order to show that
\begin{eqnarray*}
\max\limits_M|D_{i_1}D_{i_2}\cdots D_{i_k}D_t^s\varphi(t_1)-D_{i_1}D_{i_2}\cdots D_{i_k}D_t^s\varphi(t_2)|<\varepsilon
\end{eqnarray*}
as $|t_1-t_2|$ is small enough, we need to prove that
\begin{eqnarray*}
\max\limits_M|D_{i_1}D_{i_2}\cdots D_{i_k}D_t^{s+1}\varphi(t)|\leqslant C_{s+1,k}
\end{eqnarray*}
where $C_{s+1,k}$ is a universal constant. Since there holds true (\ref{6.92}), to show the above inequality we only need to show that
\begin{eqnarray*}
\max\limits_M|\nabla_{i_1}\nabla_{i_2}\cdots\nabla_{i_k}\nabla_t^s\varphi(t)|\leqslant C_{s,k}.
\end{eqnarray*}
We will provide a stronger result which is proved in Theorem \ref{theorem6.5}. The key ingredient of the proof is to estimate $||\nabla^pA||_{L^{\infty}(M_t)}$($A$ is the second fundamental form of $\varphi(t)$), where $M_t$ denotes $(M,g_t)$  and its metric $g_t$ is obtained by pulling back the metric $\langle\cdot|\cdot\rangle_N$ of $N$ via $\varphi(t)$.

Thirdly, because of $(\ref{5.67})$, we only need to show that $||\nabla^pA||_{L^2(M_t)}$ has a universal upper bound $C_p$. The method which we use is to consider
\begin{eqnarray*}
\frac{d}{dt}||\nabla^pA||^2_{L^2(M_t)}.
\end{eqnarray*}
By a complicated computation, we get that
\begin{eqnarray*}
\frac{d}{dt}||\nabla^pA||^2_{L^2(M_t)}\leqslant C(-||\nabla^pA||^2_{L^2(M_t)}+1).
\end{eqnarray*}
Using Gronwall inequality, we obtain the required result.

Throughout the process, once and again we need to use some universal interpolation inequalities on the Sobolev norms of $A$ which are shown in Lemma \ref{lemma5.9}, Lemma \ref{lemma5.10} and Lemma \ref{lemma5.11}. Although some similar interpolation inequalities were established in \cite{M} (see also \cite{DW1, DW2, H}), for the present situation the target manifold is a general Riemannian manifold instead of Euclidean space, it seems to be necessary to provide the detailed proofs of these lemmas.

\section{Preliminaries and Notations}\label{section2}
We devote this section to the introduction of some basic notations and facts about differentiable and Riemannian manifolds which are be used in this paper. \textbf{Throughout the paper the convention to sum over repeated indices will be adopted.} For two quantities $Q_1$ and $Q_2$, the symbols $Q_1\lesssim Q_2$ and $Q_1\thickapprox Q_2$ mean there is a universal constant $C$ such that $Q_1\leqslant C\cdot Q_2$ and $Q_1=C\cdot Q_2$ respectively.

Let $M$ be a compact orientable $n$-dimensional smooth manifold without boundary. $(N,\langle\cdot|\cdot\rangle_N)$ is a $(n+1)$-dimensional complete Riemannian manifold. Let $\Phi:M\longrightarrow N$ be an immersion, and the metric $g$ of $M$ is induced by $\Phi$. $\dot{\nabla}$ is the connection induced by $\Phi$ and $\dot{\nabla}^M$ denotes the Riemannian connection of $(M,g)$. $R^M$ and $R^N$ denote the Riemannian curvature tensors of $(M,g)$ and $(N,\langle\cdot|\cdot\rangle_N)$ respectively.

By Nash's embedding theorem, $(N,\,\langle\cdot|\cdot\rangle_N)$ is isometrically embedded into an Euclidean space $(\mathbb{R}^{n+1+L},\,\langle\cdot|\cdot\rangle_{\mathbb{R}^{n+1+L}})$ and the isometric embedding is denoted by $\Xi$.

From now on, we denote $\langle\cdot|\cdot\rangle_N$ and $\langle\cdot|\cdot\rangle_{\mathbb{R}^{n+1+L}}$ by $\langle\cdot|\cdot\rangle$, and denote the connection induced by $\Xi\circ\Phi$ by $\dot{\nabla}^{\mathbb{R}^{n+1+L}}$. Let $(x^i)$ be a local coordinate chart of $M$ and $(y^{\alpha})$ be a local coordinate chart of $N$. Let $g_{ij}:=g(\frac{\partial}{\partial x^i},\frac{\partial}{\partial x^j})$ and $(g^{ij}):=(g_{ij})^{-1}$. We also denote $\langle\frac{\partial}{\partial y^{\alpha}}|\frac{\partial}{\partial y^{\beta}}\rangle$ by $\gamma_{\alpha\beta}$. $\dot{\nabla}_i$ means $\dot{\nabla}_{\frac{\partial}{\partial x^i}}$. $\Upsilon$ is the unit normal vector field to the hypersurface $\Phi$. $\zeta$ is any vector field along $\Phi$. That is to say, for all $p\in M$, $\zeta(p)\in T_{\Phi(p)}N$.\medskip

We assume that $\varphi:M\times[0,T)\longrightarrow N$ is smooth and, for every $t\in[0,T)$, $\varphi(t)$ is an immersion. $g_t$ is the metric on $M$ obtained by pulling back the metric $\langle\cdot|\cdot\rangle$ of $N$ via $\varphi(t)$ and $\nabla$ is the connection induced by $\varphi(t)$ ($D$ is the connection induced by $\Xi\circ\varphi(t)$). Sometimes, we also use $\nabla$ to denote the connection induced by $\varphi:\,M\times[0,T)\longrightarrow N$ and use $D$ to denote the connection induced by $\Xi\circ\varphi$.

For simplicity, we denote $(M,g_t)$ by $M_t$. $\mu_t$ denotes the canonical measure of $M_t$. Sometimes, we use $||\cdot||_{L^p(\mu_t)}$ or $||\cdot||_p$ to denote $||\cdot||_{L^p(M_t)}$ and use $||\cdot||_{W^{k,p}(\mu_t)}$ to denote $||\cdot||_{W^{k,p}(M_t)}$. $\nu'(t)$ is any vector field along $\varphi(t)$. In other words, for any $p\in M$, $\nu'(p,t)\in T_{\varphi(p,t)}N$. $\nu(t)$ denotes the unit normal vector field to the hypersurface $\varphi(t)$.\medskip

For a tensor field on $M$
\[W(x,t):=W^{j_1\cdots j_p}_{i_1\cdots i_l}(x,t)dx^{i_1}\otimes\cdots\otimes dx^{i_l}\otimes\frac{\partial}{\partial x^{j_1}}\otimes\cdots\otimes\frac{\partial}{\partial x^{j_p}},
\]
which depends on time $t$, we define
\[
||\partial^s_t\partial^kW||_{L^{\infty}(M_t)}:=\max\limits_{\substack{i_1,\cdots,i_l\\j_1,\cdots,j_p\\a_1,\cdots,a_k}}\Big|\Big|\frac{\partial^s\partial^kW^{j_1\cdots j_p}_{i_1\cdots i_l}}{\partial t^s\partial x^{a_1}\cdots\partial x^{a_k}}(\cdot,t)\Big|\Big|_{L^{\infty}(M)}.
\]

Let
\[
S:=S^{\alpha_1\cdots\alpha_{\theta}}_{i_1\cdots i_l}dx^{i_1}\otimes\cdots\otimes dx^{i_l}\otimes\frac{\partial}{\partial y^{\alpha_1}}\otimes\cdots\otimes\frac{\partial}{\partial y^{\alpha_{\theta}}}
\]
and
\[
T:=T^{\beta_1\cdots\beta_{\delta}}_{j_1\cdots j_k}dx^{j_1}\otimes\cdots\otimes dx^{j_k}\otimes\frac{\partial}{\partial y^{\beta_1}}\otimes\cdots\otimes\frac{\partial}{\partial y^{\beta_{\delta}}}.
\]
We write $\langle S|T\rangle$ and $S\ast T$ to denote a tensor or a number formed by contraction on some indices of $S\otimes T$ using the coefficients $g^{ij}$ or $\gamma_{\alpha\beta}$. Since we do not specifically illustrate which indices we contract, usually
\begin{eqnarray*}
S\ast T-S\ast T\not=0
\end{eqnarray*}
and
\begin{eqnarray*}
\langle S|T\rangle-\langle S|T\rangle\not=0.
\end{eqnarray*}
So we appoint that
\begin{eqnarray*}
n(S\ast T)+S\ast T:=(n+1)S\ast T,
\end{eqnarray*}
\begin{eqnarray*}
n(S\ast T)-S\ast T:=(n+1)S\ast T,
\end{eqnarray*}
\begin{eqnarray*}
n\langle S|T\rangle+\langle S|T\rangle:=(n+1)\langle S|T\rangle,
\end{eqnarray*}
\begin{eqnarray*}
n\langle S|T\rangle-\langle S|T\rangle:=(n+1)\langle S|T\rangle.
\end{eqnarray*}
The reason why we make such appointment and the metric property of $S\ast T$ and $\langle S|T\rangle$ will be stated at the end of this section.

If $T_1,\ldots,T_l$ is a finite family of tensors, the symbol
\[\circledast_{i=1}^lT_i\]
will mean $T_1\ast T_2\ast\cdots\ast T_l$.

We will use the symbol $\mathfrak{p}_s(T_1,\cdots,T_l)$ for a polynomial in the tensors $T_1,\ldots,T_l$ and their iterated covariant derivatives with the $\ast$ product like
\[
\mathfrak{p}_s(T_1,\cdots,T_l)=\sum\limits_{i_1+\cdots+i_l=s}c_{i_1\cdots i_l}\dot{\nabla}^{i_1}T_1\ast\cdots\ast\dot{\nabla}^{i_l}T_l
\]
where the $c_{i_1\cdots i_l}$ are some real universal constants. Notice that every tensor $T_i$ must be present in every additive term of $\mathfrak{p}_s(T_1,\cdots,T_l)$ and there are no repetitions.\medskip

The second fundamental form $A:=h_{ij}dx^i\otimes dx^j$ of $\Phi$ is a 2-tensor where $h_{ij}$ is defined as follow
\[h_{ij}:=-\langle\dot{\nabla}_i\dot{\nabla}_j\Phi|\Upsilon\rangle,\]
and the mean curvature $H$ is the trace of $A$,
\[H:=g^{ij}h_{ij}.\]

Next, we will generalize the divergence theorem to the case that the ambient space is an abstract Riemannian manifold and the vector field does not have to be tangent to base manifold.

Because for all $X\in TM$, the divergence of $X$
\[div^M_{\Phi}X:=g^{ij}\langle\frac{\partial}{\partial x^i}|\dot{\nabla}^M_jX\rangle=g^{ij}\langle\dot{\nabla}_i\Phi|\dot{\nabla}_jX\rangle,\]
here we have used the fact that $\frac{\partial}{\partial x^i}=\dot{\nabla}_i\Phi$ and $\dot{\nabla}^M_jX=P(\Phi)\dot{\nabla}_jX$ where $P(\Phi(p))$ is the orthogonal projection operator from $T_{\Phi(p)}N$ to $d\Phi(T_pM)$ with $p\in M$. For a vector field $\tilde{X}$ with $\tilde{X}_p\in T_{\Phi(p)}N$ for any $p\in M$, we define its divergence
\[div^N_{\Phi}\tilde{X}:=g^{ij}\langle\dot{\nabla}_i\Phi|\dot{\nabla}_j\tilde{X}\rangle.\]
Since
\[\Xi\circ\Phi:M\longrightarrow \mathbb{R}^{n+1+L}\]
is an isometric immersion, $\tilde{X}$ can be regarded as a vector field in $T\mathbb{R}^{n+1+L}$ which is along $M$ and
\begin{equation}\label{2.1}
div^N_{\Phi}\tilde{X}=g^{ij}\langle\dot{\nabla}_i^{\mathbb{R}^{n+1+L}}(\Xi\circ\Phi)|\dot{\nabla}^{\mathbb{R}^{n+1+L}}_j\tilde{X}\rangle=div^{\mathbb{R}^{n+1+L}}_{\Xi\circ\Phi}\tilde{X},
\end{equation}
here we have used that
\begin{eqnarray*}
\dot{\nabla}_i\Phi=\dot{\nabla}_i^{\mathbb{R}^{n+1+L}}(\Xi\circ\Phi)
\end{eqnarray*}
and
\begin{eqnarray*}
\dot{\nabla}_j\tilde{X}=\mathbb{P}(\Xi\circ\Phi)(\dot{\nabla}^{\mathbb{R}^{n+1+L}}_j\tilde{X}),
\end{eqnarray*}
where $\mathbb{P}(y)$ is the orthogonal projection operator from $T_y\mathbb{R}^{n+1+L}$ to $T_yN$ with $y\in N$.

Let $\{\tau_1,\cdots,\tau_n\}$ be a local orthonormal frames of $TM$ and $\{\vec{n}_1,\cdots,\vec{n}_L\}$ be a local orthogonal basis of the normal bundle of $N$ in $\mathbb{R}^{n+1+L}$. Then $\{\tau_1,\cdots,\tau_n,\Upsilon,\vec{n}_1,\cdots,\vec{n}_L\}$ forms a orthonormal frames of $T\mathbb{R}^{n+1+L}$. The mean curvature vector $\overrightarrow{H}$ of $M$ in $\mathbb{R}^{n+1+L}$ can be written as
\[
\aligned
\overrightarrow{H}:&=\sum\limits_{i=1}^n(\dot{\nabla}_{\tau_i}^{\mathbb{R}^{n+1+L}}\tau_i)^{\bot}\\
&=\left[\sum\limits_{i=1}^n\langle\dot{\nabla}_{\tau_i}^{\mathbb{R}^{n+1+L}}\tau_i|\Upsilon\rangle\Upsilon+\sum\limits_{a=1}^k\langle\dot{\nabla}_{\tau_i}^{\mathbb{R}^{n+1+L}}
\tau_i|\vec{n}_a\rangle\vec{n}_a\right].
\endaligned
\]
Since $\tilde{X}$ is a map from $M$ to $TN$, we have $\langle\vec{n}_a|\tilde{X}\rangle=0$. It follows that
\begin{equation}\label{2.2}
\aligned
\langle\overrightarrow{H}|\tilde{X}\rangle
=&\sum\limits_{i=1}^n\langle\dot{\nabla}_{\tau_i}^{\mathbb{R}^{n+1+L}}\tau_i|\Upsilon\rangle\langle\Upsilon|\tilde{X}\rangle\\
=&\sum\limits_{i=1}^n\langle\mathbb{P}(\Xi\circ\Phi)(\dot{\nabla}_{\tau_i}^{\mathbb{R}^{n+1+L}}\tau_i)|\Upsilon\rangle\cdot\langle\Upsilon|\tilde{X}\rangle\\
=&\sum\limits_{i=1}^n\langle\dot{\nabla}_{\tau_i}\tau_i|\Upsilon\rangle\langle\Upsilon|\tilde{X}\rangle=-\sum\limits_{i=1}^n\langle\dot{\nabla}_{\tau_i}
\Upsilon|\tau_i\rangle\langle\Upsilon|\tilde{X}\rangle\\
=&-H\langle\Upsilon|\tilde{X}\rangle.
\endaligned
\end{equation}
Since $M$ is of empty boundary, by formula $(7.6)$ in the section 7 of the chapter 2 of \cite{S}, we have
\begin{equation}\label{2.3}
\int_M div^{\mathbb{R}^{n+1+L}}_{\Xi\circ\Phi}\tilde{X}\,dM=-\int_M\langle\tilde{X}|\overrightarrow{H}\rangle\,dM.
\end{equation}
Substituting $(\ref{2.1})$ and $(\ref{2.2})$ into $(\ref{2.3})$, we have
\begin{equation}\label{2.4}
\int_Mdiv^N_{\Phi}\tilde{X}\,dM=\int_MH\langle\Upsilon|\tilde{X}\rangle\,dM.
\end{equation}

For a fourth-order covariant tensor $\hat{T}:=\hat{T}_{ijkl}dx^i\otimes dx^j\otimes dx^k\otimes dx^l$ and a second-order covariant tensor $B:=B_{ij}dx^i\otimes dx^j$, we define following quantities:\medskip\\
\textbf{Quantity 1}.
\[
\mathfrak{q}^s(\underbrace{\hat{T},\cdots,\hat{T}}_{\alpha\,\,times},\underbrace{B,\cdots,B}_{\beta\,\,times}):=\sum C_{i_1\cdots i_lj_1\cdots j_k}\dot{\nabla}^{i_1}\hat{T}\ast\cdots\ast\dot{\nabla}^{i_l}\hat{T}\ast\dot{\nabla}^{j_1}B\ast\cdots\ast\dot{\nabla}^{j_k}B,
\]
where
\[s:=(i_1+2)+\cdots+(i_l+2)+(j_1+1)+\cdots+(j_k+1).\]
$\alpha$ can be arbitrary integer in $[1,(s-1)/2]$ while $\beta$ may be any integer in $[1,s-2]$.\medskip\\
\textbf{Quantity 2}.
\[
\aligned
&\mathfrak{q}^s(\underbrace{\hat{T},\cdots,\hat{T}}_{\alpha\,\,times},\underbrace{\dot{\nabla}\zeta,\cdots,\dot{\nabla}\zeta}_{\beta\,\,times},\underbrace{B,\cdots,B}_{\theta\,\,times})\\
:=&\sum C_{i_1\cdots i_lj_1\cdots j_kp_1\cdots p_q}\dot{\nabla}^{i_1}\hat{T}\ast\cdots\ast\dot{\nabla}^{i_l}\hat{T}\ast\dot{\nabla}^{j_1}B\ast\cdots\ast\dot{\nabla}^{j_k}B\ast\dot{\nabla}^{p_1}\zeta\ast\cdots\ast\dot{\nabla}^{p_q}\zeta,
\endaligned
\]
where
\[s:=(i_1+2)+\cdots+(i_l+2)+(j_1+1)+\cdots+(j_k+1)+p_1+\cdots+p_q,\]
and $p_1\geqslant1,\cdots,p_q\geqslant1$. $\alpha$ can be any integer in $[1,s/2-1]$ while $\beta$ and $\theta$ can be arbitrary integers in $[1,s-3]$.\medskip\\
\textbf{Quantity 3}.
\[R^s_1(\dot{\nabla}\Phi,\zeta):=\sum C_{a_1\cdots a_qbcde}\langle(\dot{\nabla}^qR^N)(\dot{\nabla}^{a_1+1}\Phi,\cdots,\dot{\nabla}^{a_q+1}\Phi)(\dot{\nabla}^b\zeta,\dot{\nabla}^{c+1}\Phi)\dot{\nabla}^d\zeta|\dot{\nabla}^e\zeta\rangle,\]
where
\[s:=(a_1+1)+\cdots+(a_q+1)+b+(c+1)+d+e.\]\medskip\\
\textbf{Quantity 4}.
\[R^s_2(\dot{\nabla}\Phi,\zeta):=\sum C_{a_1\cdots a_qbcde}\langle(\dot{\nabla}^qR^N)(\dot{\nabla}^{a_1+1}\Phi,\cdots,\dot{\nabla}^{a_q+1}\Phi)(\dot{\nabla}^{b+1}\Phi,\dot{\nabla}^{c+1}\Phi)\dot{\nabla}^d\zeta|\dot{\nabla}^{e+1}\Phi\rangle,\]
where
\[s:=(a_1+1)+\cdots+(a_q+1)+(b+1)+(c+1)+d+(e+1).\]\medskip\\
\textbf{Quantity 5}.
\[R^s_3(\dot{\nabla}\Phi,\zeta):=\sum C_{a_1\cdots a_qbcde}\langle(\dot{\nabla}^qR^N)(\dot{\nabla}^{a_1+1}\Phi,\cdots,\dot{\nabla}^{a_q+1}\Phi)(\dot{\nabla}^{b+1}\Phi,\dot{\nabla}^{c+1}\Phi)\dot{\nabla}^{d+1}\Phi|
\dot{\nabla}^e\zeta\rangle,\]
where
\[s:=(a_1+1)+\cdots+(a_q+1)+(b+1)+(c+1)+(d+1)+e.\]\medskip\\
\textbf{Quantity 6}.
\[R^s_4(\dot{\nabla}\Phi,\zeta):=\sum C_{a_1\cdots a_qbcde}\langle(\dot{\nabla}^qR^N)(\dot{\nabla}^{a_1+1}\Phi,\cdots,\dot{\nabla}^{a_q+1}\Phi)(\dot{\nabla}^b\zeta,\dot{\nabla}^{c+1}\Phi)
\dot{\nabla}^{d+1}\Phi|\dot{\nabla}^e\zeta\rangle,\]
where
\[s:=(a_1+1)+\cdots+(a_q+1)+b+(c+1)+(d+1)+e.\]\medskip\\
\textbf{Quantity 7}.
\[\mathfrak{q}^s(\underbrace{\dot{\nabla}\zeta,\cdots,\dot{\nabla}\zeta}_{\alpha\,\,times},\underbrace{B,\cdots,B}_{\beta\,\,times}):=\sum C_{j_1\cdots j_kp_1\cdots p_q}\dot{\nabla}^{j_1}B\ast\cdots\ast\dot{\nabla}^{j_k}B\ast\dot{\nabla}^{p_1}\zeta\ast\cdots\ast\dot{\nabla}^{p_q}\zeta,\]
where
\[s:=(j_1+1)+\cdots+(j_k+1)+p_1+\cdots+p_q,\]
and $p_1\geqslant1,\cdots,p_q\geqslant1$. $\alpha$ and $\beta$ can be arbitrary integers in $[1,\,s-1]$.
\medskip

\subsection{Some fundamental equations and differential relations}
Now we recall some formula or equations. Since those proofs are tedious and omitted, we list directly the facts as follows.

\noindent\textbf{Gauss equation:}
\begin{equation}\label{2.5}
\aligned
R^M_{lkij}:=&g\Big(R^M(\frac{\partial}{\partial x^i},\frac{\partial}{\partial x^j})\frac{\partial}{\partial x^k},\frac{\partial}{\partial x^l}\Big)\\
=&\langle R^N(\dot{\nabla}_i\Phi,\dot{\nabla}_j\Phi)\dot{\nabla}_k\Phi|\dot{\nabla}_l\Phi\rangle-h_{ik}h_{jl}+h_{il}h_{jk},
\endaligned
\end{equation}
which is also equivalent to
\begin{equation}\label{2.6}
(R^M)^a_{ljk}=g^{ai}\langle R^N(\dot{\nabla}_j\Phi,\dot{\nabla}_k\Phi)\dot{\nabla}_l\Phi|\dot{\nabla}_i\Phi\rangle-g^{ai}h_{ik}h_{jl}+g^{ai}h_{ij}h_{lk},
\end{equation}
where $(R^M)^a_{ljk}$ is defined as follow:
\begin{eqnarray*}
R^M(\frac{\partial}{\partial x^j},\frac{\partial}{\partial x^k})\frac{\partial}{\partial x^l}:=(R^M)^a_{ljk}\frac{\partial}{\partial x^a}.
\end{eqnarray*}
\medskip

\noindent\textbf{Codazzi equation:}
\begin{equation}\label{2.7}
h_{jk,i}-h_{ik,j}=(R^N)^{\Upsilon}_{ijk}:=\langle R^N(\dot{\nabla}_i\Phi,\dot{\nabla}_j\Phi)\dot{\nabla}_k\Phi|\Upsilon\rangle.
\end{equation}
\medskip

\noindent\textbf{Gauss-Weingarten relations:}
\begin{equation}\label{2.8}
\dot{\nabla}_i\Upsilon=h_{iq}g^{qp}\dot{\nabla}_p\Phi,
\end{equation}
\begin{equation}\label{2.9}
\dot{\nabla}_j\dot{\nabla}_i\Phi=\Gamma^k_{ij}\dot{\nabla}_k\Phi-h_{ij}\Upsilon
\end{equation}
where $\Gamma^k_{ij}$ is the connection coefficient of $\dot{\nabla}$,
\[
\Gamma^k_{ij}:=\frac{1}{2}g^{kl}\{\frac{\partial g_{il}}{\partial x^j}+\frac{\partial g_{jl}}{\partial x^i}-\frac{\partial g_{ij}}{\partial x^l}\}.
\]
\medskip

\noindent\textbf{Simon identity:}
By the definition we can prove that:
\begin{equation}\label{2.10}
(R^N)^{\Upsilon}_{ijk,l}=\langle(\dot{\nabla} R^N)(\dot{\nabla}_l\Phi)(\dot{\nabla}_i\Phi,\dot{\nabla}_j\Phi)\dot{\nabla}_k\Phi|\Upsilon\rangle+\langle R^N(\dot{\nabla}_i\Phi,\dot{\nabla}_j\Phi)\dot{\nabla}_k\Phi|\dot{\nabla}_q\Phi\rangle h_{lp}g^{pq}.
\end{equation}
Combining $(\ref{2.7})$ and $(\ref{2.10})$, we get the following Simon identity:
\begin{equation}\label{2.11}
\aligned
H,_{ij}
=&\Delta h_{ij}+|A|^2h_{ij}-h_{is}g^{sr}h_{rj}H\\
&+g^{pq}g^{sr}h_{sq}\langle R^N(\dot{\nabla}_r\Phi,\dot{\nabla}_i\Phi)\dot{\nabla}_p\Phi|\dot{\nabla}_j\Phi\rangle\\
&+g^{pq}g^{sr}h_{is}\langle R^N(\dot{\nabla}_r\Phi,\dot{\nabla}_q\Phi)\dot{\nabla}_p\Phi|\dot{\nabla}_j\Phi\rangle\\
&+g^{pq}g^{sr}h_{sj}\langle R^N(\dot{\nabla}_i\Phi,\dot{\nabla}_p\Phi)\dot{\nabla}_q\Phi|\dot{\nabla}_r\Phi\rangle\\
&+g^{pq}g^{sr}h_{pr}\langle R^N(\dot{\nabla}_j\Phi,\dot{\nabla}_q\Phi)\dot{\nabla}_i\Phi|\dot{\nabla}_s\Phi\rangle\\
&+g^{pq}\langle(\dot{\nabla} R^N)(\dot{\nabla}_j\Phi)(\dot{\nabla}_i\Phi,\dot{\nabla}_p\Phi)\dot{\nabla}_q\Phi|\Upsilon\rangle\\
&+g^{pq}\langle(\dot{\nabla} R^N)(\dot{\nabla}_p\Phi)(\dot{\nabla}_j\Phi,\dot{\nabla}_q\Phi)\dot{\nabla}_i\Phi|\Upsilon\rangle.
\endaligned
\end{equation}

Later, we will use the {\it ``Relations of changing the order of derivatives"} with respect to time variable and space variables:
\begin{equation}\label{2.12}
\nabla_t\nabla_i\nu'=\nabla_i\nabla_t\nu'+R^N(\nabla_t\varphi,\nabla_i\varphi)\nu'.
\end{equation}

For $s\geqslant2$, let $\vec{A}:=(i_1,\cdots,i_s)$,

\[
\nabla_{\vec{A}}\nu':=(\nabla^s\nu')(\frac{\partial}{\partial x^{i_1}},\cdots,\frac{\partial}{\partial x^{i_s}}),\s\s\s \tilde{\nabla}_{\vec{A}}\nu':=\nabla_{i_1}\nabla_{i_2}\cdots\nabla_{i_s}\nu',
\]

\[
\nabla_{\vec{A}}\varphi:=(\nabla^s\varphi)(\frac{\partial}{\partial x^{i_1}},\cdots,\frac{\partial}{\partial x^{i_s}}) \s\s\mbox{and}\s\s \tilde{\nabla}_{\vec{A}}\varphi:=\nabla_{i_1}\nabla_{i_2}\cdots\nabla_{i_s}\varphi.
\]
Then we obtain
\begin{equation}\label{2.13}
\aligned
\nabla_t\nabla_{\vec{A}}\nu'=&\nabla_{\vec{A}}\nabla_t\nu'+\sum(\nabla^qR^N)(\nabla_{\vec{A}_1}\varphi,\cdots,\nabla_{\vec{A}_q}\varphi)(\nabla_{\vec{B}}
\nabla_t\varphi,\nabla_{\vec{C}}\varphi)\nabla_{\vec{D}}\nu'\\
&-\mathfrak{p}_{s-2}(\nabla\nu',\nabla a(X)),
\endaligned
\end{equation}
where
\[\mathfrak{p}_{s-2}(\nabla\nu',\nabla a(X)):=\sum\limits_{k=2}^s\nabla_{i_{k+1}\cdots i_s}\Big(\sum\limits_{p=1}^{k-1}\frac{\partial\Gamma^d_{i_pi_k}}{\partial t}\nabla_{i_1\cdots i_{p-1}di_{p+1}\cdots i_{k-1}}\nu'\Big)\]
($\Gamma_{i_pi_k}^d$ is the connection coefficient of $\nabla$), and for $1\leqslant\theta\leqslant q$, $0\leqslant q\leqslant s-1$,
\[\vec{A}_{\theta}:=(i_{a_{\theta1}},\cdots,i_{a_{\theta n_{\theta}}}), \s a_{\theta1}<a_{\theta2}<\cdots<a_{\theta n_{\theta}},\s 1\leqslant n_{\theta}\leqslant s-1;\]
\[\vec{B}:=(i_{b_1},\cdots,i_{b_m}),\s b_1<b_2<\cdots<b_m,\s 0\leqslant m\leqslant s-1;\]
\[\vec{C}:=(i_{c_1},\cdots,i_{c_e}),\s c_1<c_2<\cdots<c_e, \s 1\leqslant e\leqslant s;\]
\[\vec{D}:=(i_{d_1},\cdots,i_{d_f}),\s d_1<d_2<\cdots<d_f,\s 0\leqslant f\leqslant s-1;\]
and
\[\bigcup\limits_{\theta=1}^q\{i_{a_{\theta1}},\cdots,i_{a_{\theta n_{\theta}}} \}\bigcup\{i_{b_1},\cdots,i_{b_m}\}\bigcup\{i_{c_1},\cdots,i_{c_e}\}\bigcup\{i_{d_1},\cdots,i_{d_f}\}=\{i_1,\cdots,i_s\}.\]
\medskip

\noindent\textbf{Covariant derivatives of curvature tensor of $N$ \uppercase\expandafter{\romannumeral1}:}
\begin{equation}\label{2.14}
\aligned
&[(\dot{\nabla}^mR^N)(\dot{\nabla}_{\vec{A}_1}\Phi,\cdots,\dot{\nabla}_{\vec{A}_m}\Phi)(\dot{\nabla}_{\vec{B}}\Phi,\dot{\nabla}_{\vec{C}}\Phi)\dot{\nabla}_{\vec{D}}\zeta],_e\\
=&(\dot{\nabla}^{m+1}R^N)(\dot{\nabla}_{\vec{A}_1}\Phi,\cdots,\dot{\nabla}_{\vec{A}_m}\Phi,\dot{\nabla}_e\Phi)(\dot{\nabla}_{\vec{B}}\Phi,\dot{\nabla}_{\vec{C}}\Phi)\dot{\nabla}_{\vec{D}}\zeta\\
&+\sum\limits_{q=1}^m(\dot{\nabla}^mR^N)(\dot{\nabla}_{\vec{A}_1}\Phi,\cdots,\dot{\nabla}_{(\vec{A}_q,e)}\Phi,\cdots,\dot{\nabla}_{\vec{A}_m}\Phi)(\dot{\nabla}_{\vec{B}}\Phi,\dot{\nabla}_{\vec{C}}\Phi)\dot{\nabla}_{\vec{D}}\zeta\\
&+(\dot{\nabla}^mR^N)(\dot{\nabla}_{\vec{A}_1}\Phi,\cdots,\dot{\nabla}_{\vec{A}_m}\Phi)(\dot{\nabla}_{(\vec{B},e)}\Phi,\dot{\nabla}_{\vec{C}}\Phi)\dot{\nabla}_{\vec{D}}\zeta\\
&+(\dot{\nabla}^mR^N)(\dot{\nabla}_{\vec{A}_1}\Phi,\cdots,\dot{\nabla}_{\vec{A}_m}\Phi)(\dot{\nabla}_{\vec{B}}\Phi,\dot{\nabla}_{(\vec{C},e)}\Phi)\dot{\nabla}_{\vec{D}}\zeta\\
&+(\dot{\nabla}^mR^N)(\dot{\nabla}_{\vec{A}_1}\Phi,\cdots,\dot{\nabla}_{\vec{A}_m}\Phi)(\dot{\nabla}_{\vec{B}}\Phi,\dot{\nabla}_{\vec{C}}\Phi)\dot{\nabla}_{(\vec{D},e)}\zeta
\endaligned
\end{equation}
where
\[\vec{D}=(d_1,\cdots,d_l)\]
and
\[(\vec{D},e)=(d_1,\cdots,d_l,e).\]
\textbf{Ricci identity:}
\begin{equation}\label{2.15}
\aligned
&\dot{\nabla}_{i_1\cdots i_mkl}\zeta-\dot{\nabla}_{i_1\cdots i_mlk}\zeta\\
=&R^N(\dot{\nabla}_l\Phi,\dot{\nabla}_k\Phi)\dot{\nabla}_{i_1\cdots i_m}\zeta+\sum\limits_{r=1}^m\dot{\nabla}_{i_1\cdots i_{r-1}si_{r+1}\cdots i_m}\zeta\cdot(R^M)^s_{i_rkl}.
\endaligned
\end{equation}
\textbf{Covariant derivative of curvature tensor of $M$:}
\begin{equation}\label{2.16}
(\dot{\nabla}^mR^M)(\frac{\partial}{\partial x^{p_1}},\cdots,\frac{\partial}{\partial x^{p_m}})(\frac{\partial}{\partial x^i},\frac{\partial}{\partial x^j})\frac{\partial}{\partial x^k}=(R^M)^a_{ijk,p_1\cdots p_m}\frac{\partial}{\partial x^a}.
\end{equation}
Here $(R^M)^a_{ijk,p_1\cdots p_m}$ is defined inductively as follows:
\medskip

Firstly, we define
\[(R^M)^a_{ijk,l}:=\frac{\partial(R^M)^a_{ijk}}{\partial x^l}-(R^M)^a_{sjk}\Gamma^s_{li}-(R^M)^a_{isk}\Gamma^s_{jl}-(R^M)^a_{ijs}\Gamma^s_{lk}+(R^M)^b_{ijk}\Gamma^a_{bl}.\]
If $(R^M)^a_{ijk,p_1\cdots p_m}$ has been defined, then we define
\[
\aligned
(R^M)^a_{ijk,p_1\cdots p_mp_{m+1}}:=&\frac{\partial(R^M)^a_{ijk,p_1\cdots p_m}}{\partial x^{p_{m+1}}}-(R^M)^a_{sjk,p_1\cdots p_m}\Gamma^s_{p_{m+1}i}-(R^M)^a_{isk,p_1\cdots p_m}\Gamma^s_{jp_{m+1}}\\
&-(R^M)^a_{ijs,p_1\cdots p_m}\Gamma^s_{p_{m+1}k}+(R^M)^b_{ijk,p_1\cdots p_m}\Gamma^a_{bp_{m+1}}\\
&-\sum\limits_{r=1}^m\Gamma^s_{p_rp_{m+1}}(R^M)^a_{ijk,p_1\cdots p_{r-1}sp_{r+1}\cdots p_m}.
\endaligned
\]

\noindent\textbf{Covariant derivative of curvature tensor of $N$ \uppercase\expandafter{\romannumeral2}:}\\
Let
\begin{equation}\label{2.17}
\mathfrak{R}_{ijkl}:=\langle R^N(\dot{\nabla}_j\Phi,\dot{\nabla}_k\Phi)\dot{\nabla}_l\Phi|\dot{\nabla}_i\Phi\rangle.
\end{equation}
Then
\begin{equation}\label{2.18}
\mathfrak{R}_{ijkl,p_1\cdots p_m}=\sum\langle(\dot{\nabla}^qR^N)(\dot{\nabla}_{\vec{A}_1}\Phi,\cdots,\dot{\nabla}_{\vec{A}_q}\Phi)
(\dot{\nabla}_{\vec{B}}\Phi,\dot{\nabla}_{\vec{C}}\Phi)\dot{\nabla}_{\vec{D}}\Phi|\dot{\nabla}_{\vec{E}}\Phi\rangle,
\end{equation}
where
\[(\vec{A}_1,\cdots,\vec{A}_q,\vec{B},\vec{C},\vec{D},\vec{E})=\sigma(i,j,k,l,p_1,\cdots,p_m)\]
and $\sigma$ is a permutation.

Using $(\ref{2.15})$, we get {\it "the relation of changing order of derivatives about space"}
\begin{equation}\label{2.19}
\aligned
\dot{\nabla}_{j_1\cdots j_mi_m\cdots i_1}\zeta
=&\dot{\nabla}_{j_1i_1j_2i_2\cdots j_mi_m}\zeta+\mathfrak{p}_{2m-3}(\dot{\nabla}\zeta,\mathfrak{R}+A\otimes A)\\
&+\sum(\dot{\nabla}^qR^N)(\dot{\nabla}_{\vec{A}_1}\Phi,\cdots,\dot{\nabla}_{\vec{A}_q}\Phi)(\dot{\nabla}_{\vec{B}}\Phi,\dot{\nabla}_{\vec{C}}\Phi)\dot{\nabla}_{\vec{D}}\zeta\\
\endaligned
\end{equation}
where
\begin{equation}\label{2.20}
\mathfrak{p}_s(\dot{\nabla}\zeta,\mathfrak{R}+A\otimes A):=\sum\limits_{i+j=s}C_{ij}\dot{\nabla}^{i+1}\zeta\ast\dot{\nabla}^j(\mathfrak{R}+A\otimes A),
\end{equation}
\[\mathfrak{p}_{-1}(\dot{\nabla}\zeta,\mathfrak{R}+A\otimes A):=0,\]
and
\[(\vec{A}_1,\cdots,\vec{A}_q,\vec{B},\vec{C},\vec{D})=\sigma(j_1,\cdots,j_m,i_m,\cdots,i_1)\]
where $\sigma$ is a permutation and $C_{ij}$ are some universal constants.
\medskip

\noindent\textbf{The Divergences of some vector fields along $\Phi$:}\medskip\\
I. From $(\ref{2.19})$, we get
\begin{equation}\label{2.21}
\aligned
&div^N_{\Phi}(g^{i_1j_1}\cdots g^{i_mj_m}\dot{\nabla}_{j_1\cdots j_mi_m\cdots i_1}\zeta)\\
=&g^{kl}\langle\dot{\nabla}_k\Phi|(\Delta^m\zeta),_l\rangle + g^{kl}g^{i_1j_1}\cdots g^{i_mj_m}\langle\dot{\nabla}_k\Phi|\mathfrak{p}_{2m-2}(\dot{\nabla}\zeta,\mathfrak{R}+A\otimes A)\rangle\\
&+\sum g^{kl}g^{i_1j_1}\cdots g^{i_mj_m}\langle\dot{\nabla}_k \Phi|(\dot{\nabla}^{q'}R^N)(\dot{\nabla}_{\vec{A}'_1}\Phi,\cdots,\dot{\nabla}_{\vec{A}'_{q'}}\Phi) (\dot{\nabla}_{\vec{B}'}\Phi,\dot{\nabla}_{\vec{C}'}\Phi)\dot{\nabla}_{\vec{D}'}\zeta\rangle\\
=&g^{kl}\langle\dot{\nabla}_k\Phi|(\Delta^m\zeta),_l\rangle +\sum\limits_{i+j=2m-2}C_{ij}\langle\dot{\nabla}^{i+1}\zeta|\dot{\nabla}\Phi\rangle\ast\dot{\nabla}^j(\mathfrak{R}
+A\otimes A)\\
&+\sum g^{kl}g^{i_1j_1}\cdots g^{i_mj_m}\langle\dot{\nabla}_k\Phi|(\dot{\nabla}^{q'}R^N)(\dot{\nabla}_{\vec{A}'_1}\Phi,\cdots,\dot{\nabla}_{\vec{A}'_{q'}}\Phi)
(\dot{\nabla}_{\vec{B}'}\Phi,\dot{\nabla}_{\vec{C}'}\Phi)\dot{\nabla}_{\vec{D}'}\zeta\rangle,
\endaligned
\end{equation}
where
\[(\vec{A}'_1,\cdots,\vec{A}'_{q'},\vec{B}',\vec{C}',\vec{D}')=\sigma(j_1,\cdots,j_m,i_m,\cdots,i_1,l)\]
and $\sigma$ is a permutation.\medskip

\noindent II. By a direct calculation, one can easily get
\begin{equation}\label{2.22}
div^N_{\Phi}\Upsilon=H.
\end{equation}

\noindent III. Using $(\ref{2.7})$, we obtain
\begin{equation}\label{2.23}
\aligned
div^N_{\Phi}(\Delta\Upsilon)
=&\Delta(div^N_{\Phi}\Upsilon)-H\cdot|A|^2+g^{kl}g^{ij}g^{sa}h_{sk}\langle R^N(\dot{\nabla}_i\Phi,\dot{\nabla}_j\Phi)\dot{\nabla}_l\Phi|\dot{\nabla}_a\Phi\rangle\\
&+g^{kl}g^{ij}\sum\langle\dot{\nabla}_k\Phi|(\dot{\nabla}^qR^N)(\dot{\nabla}_{\vec{A}_1}\Phi,\cdots,\dot{\nabla}_{\vec{A}_q}\Phi)(\dot{\nabla}_{(l,\vec{B})}\Phi,
\dot{\nabla}_{\vec{C}}\Phi)\dot{\nabla}_{\vec{D}}\Upsilon\rangle
\endaligned
\end{equation}
where
\[(\vec{A}_1,\cdots,\vec{A}_q,\vec{B},\vec{C},\vec{D})=\sigma(i,j)\]
and $\sigma$ is a permutation.
\medskip

\noindent IV.
\begin{equation}\label{2.24}
\aligned
div^N_{\Phi}(\Delta\zeta)
=&\Delta(div^N_{\Phi}\zeta)+g^{kl}g^{ij}h_{ki}\langle\Upsilon|\dot{\nabla}_{lj}\zeta\rangle\\
&+g^{kl}g^{ij}h_{kj}\langle\Upsilon|\dot{\nabla}_{li}\zeta\rangle+g^{kl}g^{ij}\langle h_{ki,j}\Upsilon+h_{ki}\dot{\nabla}_j\Upsilon|\dot{\nabla}_l\zeta\rangle\\
&+g^{kl}g^{ij}\langle\dot{\nabla}_k\Phi|(\dot{\nabla} R^N)(\dot{\nabla}_j\Phi)(\dot{\nabla}_l\Phi,\dot{\nabla}_i\Phi)\zeta+R^N(\dot{\nabla}_{lj}\Phi,\dot{\nabla}_i\Phi)\zeta\\
&+R^N(\dot{\nabla}_l\Phi,\dot{\nabla}_{ij}\Phi)\zeta+R^N(\dot{\nabla}_l\Phi,\dot{\nabla}_i\Phi)\dot{\nabla}_j\zeta+R^N(\dot{\nabla}_l\Phi,\dot{\nabla}_j\Phi)\dot{\nabla}_i\zeta\rangle\\
&+g^{kl}g^{ij}g^{sa}\langle\dot{\nabla}_k\Phi|\dot{\nabla}_s\zeta\rangle\langle R^N(\dot{\nabla}_i\Phi,\dot{\nabla}_j\Phi)\dot{\nabla}_l\Phi|\dot{\nabla}_a\Phi\rangle\\
&+g^{ij}g^{sa}h_{aj}\langle\dot{\nabla}_i\Upsilon|\dot{\nabla}_s\zeta\rangle-H\cdot g^{sa}\langle\dot{\nabla}_a\Upsilon|\dot{\nabla}_s\zeta\rangle.
\endaligned
\end{equation}

\noindent V. From $(\ref{2.23})$ and $(\ref{2.24})$ we can get inductively
\begin{equation}\label{2.25}
\aligned
div^N_{\Phi}(\Delta^m\Upsilon)
=&\Delta^m(div^N_{\Phi}\Upsilon)+\sum\limits_{i+j+k=2m-2}C_{ijk}\dot{\nabla}^iA\ast\langle\dot{\nabla}^{j+1}\Upsilon|\dot{\nabla}^{k+1}\Upsilon\rangle\\
&+\sum\limits_{i+j+k=2m-2}C_{ijk}\langle\dot{\nabla}^{i+1}\Phi|\dot{\nabla}^{j+1}\Upsilon\rangle\ast\dot{\nabla}^k\mathfrak{R}\\
&+\sum\limits_{\substack{a_1+\cdots+a_q\\+q+b+c+d+e\\
=2m-2}}C_{a_1\cdots a_qbcde}\langle\dot{\nabla}^{e+1}\Phi\\
&|(\dot{\nabla}^qR^N)(\dot{\nabla}^{a_1+1}\Phi,\cdots,\dot{\nabla}^{a_q+1}\Phi)(\dot{\nabla}^{b+1}\Phi,\dot{\nabla}^{c+1}\Phi)\dot{\nabla}^{d+1}\Upsilon\rangle.
\endaligned
\end{equation}

\begin{rem}\label{remark2.2}
Later, we usually use the following inequalities
\begin{eqnarray}\label{2.31}
|S\ast T|\leqslant|S|\cdot|T|
\end{eqnarray}
and
\begin{eqnarray}\label{2.32}
|\langle S|T\rangle|\leqslant|S|\cdot|T|.
\end{eqnarray}
This can be easily seen by choosing respectively an orthonormal basis in the tangent space at a point of $M$ and the tangent space at the corresponding point of $N$. In such two coordinate charts we have
\begin{eqnarray*}
\aligned
|S\ast T|^2&=\sum\limits_{\substack{free\\indices}}\Big(\sum\limits_{\substack{contracted\\indices}}S^{\alpha_1\cdots\alpha_{\theta}}_{i_1\cdots i_l}\cdot T^{\beta_1\cdots\beta_{\delta}}_{j_1\cdots j_k}\Big)^2\\
&\leqslant\sum\limits_{\substack{free\\indices}}\Big[\sum\limits_{\substack{contracted\\indices}}(S^{\alpha_1\cdots\alpha_{\theta}}_{i_1\cdots i_l})^2\Big]\cdot\Big[\sum\limits_{\substack{contracted\\indices}}(T^{\beta_1\cdots\beta_{\delta}}_{j_1\cdots j_k})^2\Big]\\
&\leqslant\Big[\sum\limits_{\substack{free\\indices}}\sum\limits_{\substack{contracted\\indices}}(S^{\alpha_1\cdots\alpha_{\theta}}_{i_1\cdots i_l})^2\Big]\cdot\Big[\sum\limits_{\substack{free\\indices}}\sum\limits_{\substack{contracted\\indices}}(T^{\beta_1\cdots\beta_{\delta}}_{j_1\cdots j_k})^2\Big]\\
&=|S|^2\cdot|T|^2.
\endaligned
\end{eqnarray*}
Using the same approach, we can easily get $(\ref{2.32})$. Comparing the following inequality
\begin{eqnarray*}
\aligned
|n(S\ast T)-(S\ast T)|
\leqslant |n(S\ast T)|+|S\ast T|\leqslant & n|S|\cdot|T|+|S|\cdot|T|\\
=&(n+1)|S|\cdot|T|
\endaligned
\end{eqnarray*}
with
\begin{eqnarray*}
|(n+1)S\ast T|\leqslant(n+1)|S|\cdot|T|,
\end{eqnarray*}
one will see why the result of $n(S\ast T)-(S\ast T)$ is denoted by $(n+1)(S\ast T)$ instead of $(n-1)(S\ast T)$. As for $n\langle S|T\rangle-\langle S|T\rangle$, the reason is similar.
\end{rem}

\subsection{Computation on the first variation of $\mathfrak{F}_m$}
In the previous we have defined that $\varphi_t:M\longrightarrow N$ is a one-parameter family of immersions. From now on, we use $div^N$ to denote $div^N_{\varphi_t}$ for short and write $g_t$ as $g$. $H$ and $A$ are the mean curvature and the second fundamental form of $\varphi_t$ respectively.

Setting $X_p:=\frac{\partial}{\partial t}\varphi_t(p)\Big|_{t=0}$, one can obtain a vector field along $M$ as a submanifold of $N$ via $\varphi_0$.
Then
\begin{equation}\label{3.1}
\aligned
\frac{d}{dt}\sqrt{det(g_{ij})}\Big|_{t=0}
=&\frac{1}{2\sqrt{det(g_{ij})}}\cdot\frac{d}{dt}\Big[det(g_{ij})\Big]\Big|_{t=0}\\
=&\frac{1}{2}\frac{dg_{ij}}{dt}\Big|_{t=0}\cdot g^{ij}\sqrt{det(g_{ij})}\\
=&g^{ij}\langle\nabla_t\nabla_i\varphi_t|\nabla_j\varphi_t\rangle\Big|_{t=0}\sqrt{det(g_{ij})}\\
=&g^{ij}\langle\nabla_i\nabla_t\varphi_t\Big|_{t=0}|\nabla_j\varphi_0\rangle\sqrt{det(g_{ij})}\\
=&g^{ij}\langle\nabla_iX|\nabla_j\varphi_0\rangle\sqrt{det(g_{ij})}\\
=&div^NX\sqrt{det(g_{ij})}.
\endaligned
\end{equation}
So it follows that
\begin{equation}\label{3.2}
\aligned
&\frac{d}{dt}\mathfrak{F}_m(\varphi_t)\Big|_{t=0}\\
=&\int_M(1+|\nabla^m\nu|^2)div^NX\,d\mu_0+\int_M\frac{\partial}{\partial t}|\nabla^m\nu|^2\,d\mu_0\\
=&\int_M\Big\{div^N[(1+|\nabla^m\nu|^2)X]-\langle \mbox{grad}^M(|\nabla^m\nu|^2)|X\rangle\Big\}\,d\mu_0+\int_M\frac{\partial}{\partial t}|\nabla^m\nu|^2\,d\mu_0.
\endaligned
\end{equation}
Here, for a function $u\in C^1(M)$, the gradient of $u$ with respect to the metric on $M$ is defined as
\begin{eqnarray*}
\mbox{grad}^Mu:=\frac{\partial u}{\partial x^i}g^{ij}\frac{\partial}{\partial x^j}=\frac{\partial u}{\partial x^i}g^{ij}\nabla_j\varphi.
\end{eqnarray*}
In $(\ref{2.4})$, taking $\tilde{X}=(1+|\nabla^m\nu|^2)X$, we have
\begin{equation}\label{3.3}
\aligned
\frac{d}{dt}\mathfrak{F}_m(\varphi_t)\Big|_{t=0}
=&-\int_M\langle\mbox{grad}^M(|\nabla^m\nu|^2)|X\rangle\,d\mu_0+\int_M(1+|\nabla^m\nu|^2)H\langle X|\nu\rangle\,d\mu_0\\
&+\int_M\frac{\partial}{\partial t}|\nabla^m\nu|^2\,d\mu_0.
\endaligned
\end{equation}
Now, we need to compute the derivatives in the last term on the right-hand side of $(\ref{3.3})$. For the metric tensor $g_{ij}$, let
\begin{equation}\label{3.4}
\aligned
a_{ij}(X):&=\frac{\partial g_{ij}}{\partial t}\Big|_{t=0}=\langle\nabla_t\nabla_i\varphi_t\Big|_{t=0}|\nabla_j\varphi_0\rangle+\langle\nabla_t\nabla_j\varphi_t\Big|_{t=0}|\nabla_i\varphi_0\rangle\\
&=\langle\nabla_iX|\nabla_j\varphi_0\rangle+\langle\nabla_jX|\nabla_i\varphi_0\rangle.
\endaligned
\end{equation}
Differentiating the formula $g_{is}g^{sj}=\delta_i^j$, we get
\begin{equation}\label{3.5}
\frac{\partial g^{ij}}{\partial t}\Big|_{t=0}=-g^{is}\cdot\frac{\partial}{\partial t}g_{sl}\Big|_{t=0}\cdot g^{lj}=-g^{is}a_{sl}(X)g^{lj}.
\end{equation}
It is well-known that the derivative of $\nu$ is tangent to $M$, so it is given by
\begin{equation}\label{3.6}
\aligned
b(X):&=\nabla_t\nu\Big|_{t=0}=\langle\nabla_t\nu\Big|_{t=0}|\nabla_i\varphi_0\rangle g^{ij}\nabla_j\varphi_0\\
&=-\langle\nu|\nabla_t\nabla_i\varphi_t\Big|_{t=0}\rangle g^{ij}\nabla_j\varphi_0=-\langle\nu|\nabla_iX\rangle g^{ij}\nabla_j\varphi_0\\
&=-\frac{\partial\langle\nu|X\rangle}{\partial x^i}g^{ij}\nabla_j\varphi_0+\langle\nabla_i\nu|X\rangle g^{ij}\nabla_j\varphi_0\\
&=-\mbox{grad}^M(\langle\nu|X\rangle)+\langle\nabla_p\varphi_0|X\rangle\nabla_j\varphi_0\cdot h_{iq}g^{qp}g^{ij}.
\endaligned
\end{equation}
Using the same method as in page 147 of \cite{M}, one can easily prove that
\begin{equation}\label{3.7}
\frac{\partial}{\partial t}\Gamma^i_{jk}=\frac{1}{2}g^{il}\Big\{a_{kl,j}(X)+a_{jl,k}(X)-a_{jk,l}(X)\Big\}.
\end{equation}
Hence
\begin{eqnarray*}
\aligned
\frac{\partial}{\partial t}|\nabla^m\nu|^2=&\frac{\partial}{\partial t}\Big(g^{i_1j_1}\cdots g^{i_mj_m}\langle\nabla_{i_1\cdots i_m}\nu|\nabla_{j_1\cdots j_m}\nu\rangle\Big)\\
=&\sum\limits_{l=1}^mg^{i_1j_1}\cdots\frac{\partial}{\partial t}g^{i_lj_l}\cdots g^{i_mj_m}\langle\nabla_{i_1\cdots i_m}\nu|\nabla_{j_1\cdots j_m}\nu\rangle\\
&+2g^{i_1j_1}\cdots g^{i_mj_m}\langle\nabla_t\nabla_{i_1\cdots i_m}\nu|\nabla_{j_1\cdots j_m}\nu\rangle.
\endaligned
\end{eqnarray*}
Noting $(\ref{3.5})$, immediately we have
\begin{eqnarray*}
\aligned
\frac{\partial}{\partial t}|\nabla^m\nu|^2=&-\sum\limits_{l=1}^mg^{i_1j_1}\cdots(g^{i_lp}a_{pq}(X)g^{qj_l})\cdots g^{i_mj_m}\langle\nabla_{i_1\cdots i_m}\nu|\nabla_{j_1\cdots j_m}\nu\rangle\\
&+2g^{i_1j_1}\cdots g^{i_mj_m}\langle\nabla_t\nabla_{i_1\cdots i_m}\nu|\nabla_{j_1\cdots j_m}\nu\rangle.
\endaligned
\end{eqnarray*}
Substituting $(\ref{2.13})$ into the right-hand side of above equation, we obtain
\begin{equation}\label{3.8}
\aligned
\frac{d}{dt}\mathfrak{F}_m(\varphi_t)\Big|_{t=0}
=&-\int_M\langle \mbox{grad}^M(|\nabla^m\nu|^2)|X\rangle\,d\mu_0+\int_M(1+|\nabla^m\nu|^2)H\langle X|\nu\rangle\,d\mu_0\\
&-\sum\limits_{k=1}^m\int_Mg^{i_1j_1}\cdots g^{i_ks}a_{sl}(X)g^{lj_k}\cdots g^{i_mj_m}\langle\nabla_{i_1\cdots i_m}\nu|\nabla_{j_1\cdots j_m}\nu\rangle\,d\mu_0\\
&+2\int_Mg^{i_1j_1}\cdots g^{i_mj_m}\langle\nabla_{i_1\cdots i_m}b(X)|\nabla_{j_1\cdots j_m}\nu\rangle\,d\mu_0\\
&-2\int_Mg^{i_1j_1}\cdots g^{i_mj_m}\langle\mathfrak{p}_{m-2}(\nabla\nu,\nabla a(X))|\nabla_{j_1\cdots j_m}\nu\rangle\,d\mu_0\\
&+2\sum\int_Mg^{i_1j_1}\cdots g^{i_mj_m}\langle\nabla_{j_1\cdots j_m}\nu|\\
&(\nabla^qR^N)(\nabla_{\vec{A}_1}\varphi,\cdots,\nabla_{\vec{A}_q}\varphi)(\nabla_{\vec{B}}
X,\nabla_{\vec{C}}\varphi)\nabla_{\vec{D}}\nu\rangle.
\endaligned
\end{equation}
It is easy to see that the above identity is linear with respect to $X$. By the same argument as in Proposition 3.4 of \cite{M} we know that $\frac{d}{dt}\mathfrak{F}_m(\varphi_t)\Big|_{t=0}$ depends only on $\langle X|\nu\rangle$. This means that, in the computation of $\frac{d}{dt}\mathfrak{F}_m(\varphi_t)\Big|_{t=0}$, we can assume that $X$ is just a normal field, i.e.
\begin{equation}\label{3.9}
X=\langle X|\nu\rangle\nu.
\end{equation}
Hence we can continue the previous computations in this situation and to get from $(\ref{3.4})$, $(\ref{3.5})$ and $(\ref{3.6})$
\begin{equation}\label{3.10}
a_{ij}(X)=2h_{ij}\langle X|\nu\rangle,
\end{equation}
\begin{equation}\label{3.11}
\frac{\partial g^{ij}}{\partial t}=-2g^{is}h_{sl}g^{lj}\langle X|\nu\rangle
\end{equation}
and
\begin{equation}\label{3.12}
b(X)=-\mbox{grad}^M\langle X|\nu\rangle.
\end{equation}

Note that, for the last term on the right-hand side of $(\ref{3.8})$, the unknown vector field $X$ hides in curvature tensor. To get Euler-Lagrange equation, we would like to take it out. From $(\ref{3.9})$ we derive that
\begin{equation}\label{3.13}
\nabla_{\vec{B}}X=\sum\nabla_{\vec{G}}(\langle X|\nu\rangle)\cdot\nabla_{\vec{H}}\nu
\end{equation}
where $$\vec{G}:=(i_{g_1},\cdots,i_{g_{|\vec{G}|}}), \s g_1<g_2<\cdots<g_{|\vec{G}|};$$
$$\vec{H}:=(i_{h_1},\cdots,i_{h_{|\vec{H}|}}), \s h_1<h_2<\cdots<h_{|\vec{H}|};$$
and $(\vec{G},\vec{H})=\sigma(\vec{B})$
where $\sigma$ is a permutation of $\{G, H\}$. $|\vec{G}|$ means the length of $\vec{G}$.

Substituting $(\ref{3.9}),(\ref{3.10}),(\ref{3.11}),(\ref{3.12})$ and $(\ref{3.13})$ into $(\ref{3.8})$, one can obtain
\begin{equation}\label{3.14}
\aligned
&\frac{d}{dt}\mathfrak{F}_m(\varphi_t)\Big|_{t=0}=\int_M(1+|\nabla^m\nu|^2)H\langle X|\nu\rangle\,d\mu_0\\
&-2\sum\limits_{k=1}^m\int_Mg^{i_1j_1}\cdots g^{i_ks}h_{sl}g^{lj_k}\cdots g^{i_mj_m}\langle X|\nu\rangle\langle\nabla_{i_1\cdots i_m}\nu|\nabla_{j_1\cdots j_m}\nu\rangle\,d\mu_0\\
&-2\int_Mg^{i_1j_1}\cdots g^{i_mj_m}\langle\nabla_{i_1\cdots i_m}(grad^M\langle X|\nu\rangle)|\nabla_{j_1\cdots j_m}\nu\rangle\,d\mu_0\\
&-2\int_Mg^{i_1j_1}\cdots g^{i_mj_m}\langle\mathfrak{p}_{m-2}(\nabla\nu,\nabla a(X))|\nabla_{j_1\cdots j_m}\nu\rangle\,d\mu_0\\
&+2\sum\int_Mg^{i_1j_1}\cdots g^{i_mj_m}\langle(\nabla^qR^N)(\nabla_{\vec{A}_1}\varphi,\cdots,\nabla_{\vec{A}_q}\varphi)(\nabla_{\vec{H}}
\nu,\nabla_{\vec{C}}\varphi)\nabla_{\vec{D}}\nu|\\
&\nabla_{j_1\cdots j_m}\nu\rangle\nabla_{\vec{G}}(\langle X|\nu\rangle)\,d\mu_0\\
:=&\int_M(1+|\nabla^m\nu|^2)H\langle X|\nu\rangle\,d\mu_0 -2J_1-2J_2+2\sum J_{\vec{A}_1\cdots\vec{A}_q\vec{H}\vec{C}\vec{D}\vec{G}}\\
&-2\sum\limits_{k=1}^m\int_Mg^{i_1j_1}\cdots g^{i_ks}h_{sl}g^{lj_k}\cdots g^{i_mj_m}\langle X|\nu\rangle\langle\nabla_{i_1\cdots i_m}\nu|\nabla_{j_1\cdots j_m}\nu\rangle\,d\mu_0\\
=&\int_M(1+|\nabla^m\nu|^2)H\langle X|\nu\rangle\,d\mu_0-2m\int_MA\ast\langle\nabla^m\nu|\nabla^m\nu\rangle\langle X|\nu\rangle\,d\mu_0\\
&-2J_1-2J_2+2\sum J_{\vec{A}_1\cdots\vec{A}_q\vec{H}\vec{C}\vec{D}\vec{G}},
\endaligned
\end{equation}
where
\[
\aligned
J_{\vec{A}_1\cdots\vec{A}_q\vec{H}\vec{C}\vec{D}\vec{G}}
:=&\int_Mg^{i_1j_1}\cdots g^{i_mj_m}\nabla_{\vec{G}}(\langle X|\nu\rangle)\\
&\langle(\nabla^qR^N)(\nabla_{\vec{A}_1}\varphi,\cdots,\nabla_{\vec{A}_q}\varphi)(\nabla_{\vec{H}}
\nu,\nabla_{\vec{C}}\varphi)\nabla_{\vec{D}}\nu|\nabla_{j_1\cdots j_m}\nu\rangle\,d\mu_0.
\endaligned\]
However, taking integration by parts we have
\begin{equation}\label{3.15}
\aligned
J_{\vec{A}_1\cdots\vec{A}_q\vec{H}\vec{C}\vec{D}\vec{G}}
=&(-1)^{|\vec{G}|}\sum\int_Mg^{i_1j_1}\cdots g^{i_mj_m}\langle X|\nu\rangle\langle \nabla_{j_1\cdots j_m\vec{K}''}\nu |\\ &(\nabla^{q'}R^N)(\nabla_{\vec{A}'_1}\varphi,\cdots,\nabla_{\vec{A}'_{q'}}\varphi)(\nabla_{\vec{H}'}
\nu,\nabla_{\vec{C}'}\varphi)\nabla_{\vec{D}'}\nu\rangle\,d\mu_0
\endaligned
\end{equation}
where $q'\geqslant q$, $\vec{A}'_r=(\vec{A}_r,\vec{A}''_r)$ for $1\leqslant r\leqslant q$, $\vec{H}'=(\vec{H},\vec{H}'')$, $\vec{C}'=(\vec{C},\vec{C}'')$, $\vec{D}'=(\vec{D},\vec{D}'')$ and, moreover, the following relation holds
\begin{eqnarray*}
(\vec{A}'_{q+1},\cdots,\vec{A}'_{q'},\vec{A}''_1,\cdots,\vec{A}''_q,\vec{H}'',\vec{C}'',\vec{D}'',\vec{K}'')=\sigma(\vec{G})
\end{eqnarray*}
where $\sigma$ is a permutation. Using integration by parts and $(\ref{2.4})$, we have
\begin{eqnarray}\label{3.16}
\aligned
J_1:=&\int_Mg^{i_1j_1}\cdots g^{i_mj_m}\langle\nabla_{i_1\cdots i_m}(grad^M\langle X|\nu\rangle)|\nabla_{j_1\cdots j_m}\nu\rangle\,d\mu_0\\
=&(-1)^m\int_Mg^{i_1j_1}\cdots g^{i_mj_m}\langle grad^M\langle X|\nu\rangle|\nabla_{j_1\cdots j_mi_m\cdots i_1}\nu\rangle\,d\mu_0\\
=&(-1)^m\int_Mdiv^N(\langle X|\nu\rangle g^{i_1j_1}\cdots g^{i_mj_m}\nabla_{j_1\cdots j_mi_m\cdots i_1}\nu)\,d\mu_0\\
&-(-1)^m\int_M\langle X|\nu\rangle div^N(g^{i_1j_1}\cdots g^{i_mj_m}\nabla_{j_1\cdots j_mi_m\cdots i_1}\nu)\,d\mu_0\\
=&(-1)^m\int_M\langle X|\nu\rangle g^{i_1j_1}\cdots g^{i_mj_m}\langle\nabla_{j_1\cdots j_mi_m\cdots i_1}\nu|\nu\rangle H\,d\mu_0\\
&+(-1)^{m+1}\int_M\langle X|\nu\rangle div^N(g^{i_1j_1}\cdots g^{i_mj_m}\nabla_{j_1\cdots j_mi_m\cdots i_1}\nu)\,d\mu_0\\
:=&(-1)^mJ_1^1+(-1)^{m+1}J_1^2.
\endaligned
\end{eqnarray}
Since
\begin{equation}\label{3.17}
0=\nabla_{j_1\cdots j_mi_m\cdots i_1}\langle\nu|\nu\rangle=2\langle\nabla_{j_1\cdots j_mi_m\cdots i_1}\nu|\nu\rangle+\sum\langle\nabla_{\vec{a}}\nu|\nabla_{\vec{b}}\nu\rangle
\end{equation}
where
\begin{equation}\label{3.18}
|\vec{a}|+|\vec{b}|=2m,\s\s |\vec{a}|\geqslant1,\s\s |\vec{b}|\geqslant1,
\end{equation}
substituting $(\ref{3.17})$ into the expression of $J_1^1$ we have
\begin{equation}\label{3.19}
\aligned
J_1=&\frac{(-1)^{m+1}}{2}\sum\limits_{\substack{|\vec{a}|+|\vec{b}|=2m\\|\vec{a}|\geqslant1,|\vec{b}|\geqslant1}}\int_M\langle X|\nu\rangle g^{i_1j_1}\cdots g^{i_mj_m}\langle\nabla_{\vec{a}}\nu|\nabla_{\vec{b}}\nu\rangle H\,d\mu_0+(-1)^{m+1}J_1^2\\
=&\frac{(-1)^{m+1}}{2}\sum\limits_{a=1}^{2m-1}\int_M\langle X|\nu\rangle A\ast\langle\nabla^a\nu|\nabla^{2m-a}\nu\rangle\,d\mu_0+(-1)^{m+1}J_1^2.
\endaligned
\end{equation}
Substituting $(\ref{3.7})$ and $(\ref{3.10})$ into $\mathfrak{p}_{m-2}(\nabla\nu,\nabla a(X))$ and taking integration by parts we have
\begin{equation}\label{3.20}
\aligned
J_2:=&\int_Mg^{i_1j_1}\cdots g^{i_mj_m}\langle\mathfrak{p}_{m-2}(\nabla\nu,\nabla a(X))|\nabla_{j_1\cdots j_m}\nu\rangle\,d\mu_0\\
=&\sum\limits_{k=2}^m(-1)^{m-k+1}\sum\limits_{p=1}^{k-1}\int_Mg^{i_1j_1}\cdots g^{i_mj_m}g^{dl}h_{i_kl}\langle X|\nu\rangle\langle\nabla_{i_1\cdots i_{p-1}di_{p+1}\cdots i_{k-1}i_p}\nu|\\
&\nabla_{j_1\cdots j_mi_m\cdots i_{k+1}}\nu\rangle\,d\mu_0\\
&+\sum\limits_{k=2}^m(-1)^{m-k+1}\sum\limits_{p=1}^{k-1}\int_Mg^{i_1j_1}\cdots g^{i_mj_m}g^{dl}h_{i_kl}\langle X|\nu\rangle\langle\nabla_{i_1\cdots i_{p-1}di_{p+1}\cdots i_{k-1}}\nu|\\
&\nabla_{j_1\cdots j_mi_m\cdots i_{k+1}i_p}\nu\rangle\,d\mu_0\\
&+\sum\limits_{k=2}^m(-1)^{m-k+1}\sum\limits_{p=1}^{k-1}\int_Mg^{i_1j_1}\cdots g^{i_mj_m}g^{dl}h_{i_pl}\langle X|\nu\rangle\langle\nabla_{i_1\cdots i_{p-1}di_{p+1}\cdots i_k}\nu|\\
&\nabla_{j_1\cdots j_mi_m\cdots i_{k+1}}\nu\rangle\,d\mu_0\\
&+\sum\limits_{k=2}^m(-1)^{m-k+1}\sum\limits_{p=1}^{k-1}\int_Mg^{i_1j_1}\cdots g^{i_mj_m}g^{dl}h_{i_pl}\langle X|\nu\rangle\langle\nabla_{i_1\cdots i_{p-1}di_{p+1}\cdots i_{k-1}}\nu|\\
&\nabla_{j_1\cdots j_mi_m\cdots i_k}\nu\rangle\,d\mu_0\\
&-\sum\limits_{k=2}^m(-1)^{m-k+1}\sum\limits_{p=1}^{k-1}\int_Mg^{i_1j_1}\cdots g^{i_mj_m}g^{dl}h_{i_pi_k}\langle X|\nu\rangle\langle\nabla_{i_1\cdots i_{p-1}di_{p+1}\cdots i_{k-1}l}\nu|\\
&\nabla_{j_1\cdots j_mi_m\cdots i_{k+1}}\nu\rangle\,d\mu_0\\
&-\sum\limits_{k=2}^m(-1)^{m-k+1}\sum\limits_{p=1}^{k-1}\int_Mg^{i_1j_1}\cdots g^{i_mj_m}g^{dl}h_{i_pi_k}\langle X|\nu\rangle\langle\nabla_{i_1\cdots i_{p-1}di_{p+1}\cdots i_{k-1}}\nu|\\
&\nabla_{j_1\cdots j_mi_m\cdots i_{k+1}l}\nu\rangle\,d\mu_0\\
=&-3\sum\limits_{k=2}^m(-1)^{m-k}\langle X|\nu\rangle A\ast(\langle\nabla^k\nu|\nabla^{2m-k}\nu\rangle+\langle\nabla^{k-1}\nu|\nabla^{2m-k+1}\nu\rangle).
\endaligned
\end{equation}

For $J_1^2$, we use $(\ref{2.21})$ to transform $div^N(g^{i_1j_1}\cdots g^{i_mj_m}\nabla_{j_1\cdots j_mi_m\cdots i_1}\nu)$ into $div^N(\Delta^m\nu)$ and some remainder terms. Then, using $(\ref{2.22})$ and $(\ref{2.25})$, we can transform $div^N(\Delta^m\nu)$ into $\Delta^m(div^N\nu)=\Delta^mH$ adding some remainder terms again. In conclusion,
\begin{equation}\label{3.21}
\frac{d}{dt}\mathfrak{F}_m(\varphi_t)\Big|_{t=0}=\int_ME_m(\varphi_0)\langle\nu|X\rangle\,d\mu_0
\end{equation}
where
\begin{eqnarray}\label{3.22}
\aligned
E_m(\varphi_0)
:=&(1+|\nabla^m\nu|^2)H-2mA\ast\langle\nabla^m\nu|\nabla^m\nu\rangle+(-1)^m\sum\limits_{a=1}^{2m-1}\langle\nabla^a\nu|\nabla^{2m-a}\nu\rangle\ast A\\
&+6\sum\limits_{k=2}^m(-1)^{m-k}A\ast(\langle\nabla^k\nu|\nabla^{2m-k}\nu\rangle+\langle\nabla^{k-1}\nu|\nabla^{2m-k+1}\nu\rangle)\\
&+2\sum\limits_{\substack{a_1+\cdots+a_q+q\\+h+c+d+k=m-1}}\langle(\nabla^qR^N)(\nabla^{a_1+1}\varphi,\cdots,\nabla^{a_q+1}\varphi)
(\nabla^h\nu,\nabla^{c+1}\varphi)\nabla^d\nu|\nabla^{m+k}\nu\rangle\\
&+2(-1)^m\Big[\Delta^mH+\sum\limits_{i+j+k=2m-2}C_{ijk}\nabla^iA\ast\langle\nabla^{j+1}\nu|\nabla^{k+1}\nu\rangle\\
&+\sum\limits_{\substack{a_1+\cdots+a_q\\+q+b+c+d+e\\=2m-2}}C_{a_1\cdots a_qbcde}\langle\nabla^{e+1}\varphi|\\
&(\nabla^qR^N)(\nabla^{a_1+1}\varphi,\cdots,\nabla^{a_q+1}\varphi)(\nabla^{b+1}\varphi,\nabla^{c+1}\varphi)\nabla^{d+1}\nu\rangle\\
&+\sum\limits_{i+j+k=2m-2}C_{ijk}\langle\nabla^{i+1}\varphi|\nabla^{j+1}\nu\rangle\ast\nabla^k\mathfrak{R}\\
&+\sum\limits_{\substack{a_1+\cdots+a_q+q\\+b+c+d=2m-1}}\langle\nabla\varphi|(\nabla^qR^N)(\nabla^{a_1+1}\varphi,\cdots,
\nabla^{a_q+1}\varphi)(\nabla^{b+1}\varphi,\nabla^{c+1}\varphi)\nabla^d\nu\rangle\\
&+\sum\limits_{i+j=2m-2}C_{ij}\langle\nabla^{i+1}\nu|\nabla\varphi\rangle\ast\nabla^j(\mathfrak{R}+A\otimes A)\Big].
\endaligned
\end{eqnarray}
Here the coefficients are universal constants.

\section{Sobolev inequalities on submanifolds}
Later, we need to establish some interpolation inequalities for some covariant derivative tensors with uniform coefficients with respect to the time variable $t$. In \cite{M} such estimates lie on the well-known Sobolev inequalities on a submanifold established by Michael and Simon in \cite{MS}. In the present situation, Theorem 2.1 of \cite{HS} is instead the foundation to prove interpolation inequalities for tensors of universal coefficients. We will see that, combining it with our Lemma \ref{lemma5.4} indicates why the sectional curvature and injectivity radius appear in our main result. For the sake of convenience and completeness, we would like to write it in the following.

Let $N$ be an $n$-dimensional Riemannian manifold and let $M\rightarrow N$ be an isometric immersion of an $m$-dimensional Riemannian manifold $M$ into $N$. We use the following quantities:

$\bar{K}_{\pi}=$ sectional curvature in $N$,

$\vec{H}=$ mean curvature vector field of the immersion,

$\bar{R}=$ injectivity radius of $N$,

$\omega_m=$ volume of the unit ball of $\mathbb{R}^m$,

$b=$ a positive real number or a pure imaginary one.

\begin{thm}(\cite{HS, HS1})\label{theorem2.1}
Let $M\longrightarrow N$ be an isometric immersion of Riemannian manifolds of dimension $m$ and $n$, respectively.
Assume that $\bar{K}_{\pi}\leqslant b^2\leqslant1$ and $\bar{R}>0$. Then, for a nonnegative $C^1$ function $h$ on $M$ vanishing on $\partial M$ there holds true
\begin{eqnarray}\label{2.26}
\Big(\int_Mh^{\frac{m}{m-1}}\,dM\Big)^{\frac{m-1}{m}}\leqslant c(m)\int_M(|\nabla h|+h|\vec{H}|)\,dM,
\end{eqnarray}
provided
\begin{eqnarray}\label{2.27}
b^2(1-\alpha)^{-2/m}[\omega_m^{-1}\mbox{vol}(\mbox{supp}\,h)]^{2/m}\leqslant1
\end{eqnarray}
and
\begin{eqnarray}\label{2.28}
2\rho_0\leqslant\bar{R},
\end{eqnarray}
where
\begin{eqnarray}\label{2.29}
\rho_0:=\left\{
\begin{array}{ll}
\aligned
&b^{-1}\arcsin(b)(1-\alpha)^{-1/m}[\omega_m^{-1}vol(supp\,h)]^{1/m}\s\s \mbox{for}\s b\,\,real,\\
&(1-\alpha)^{-1/m}[\omega_m^{-1}vol(supp\,h)]^{1/m}\s\s \mbox{for}\s b\,\,imaginary.
\endaligned
\end{array}
\right.
\end{eqnarray}
Here $\alpha$ is a free parameter, $0<\alpha<1$, and
\begin{eqnarray}\label{2.30}
c(m)=c(m,\alpha)=\frac{1}{2}\pi\cdot2^m\alpha^{-1}(1-\alpha)^{-1/m}\frac{m}{m-1}\omega_m^{-1/m}.
\end{eqnarray}
For $b$ imaginary we may omit the factor $\frac{1}{2}\pi$ in the definition of $c(m)$.
\end{thm}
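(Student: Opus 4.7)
The plan is to adapt the classical Michael--Simon argument from the Euclidean case to the Riemannian ambient $N$, using Hessian/Rauch comparison to absorb the curvature into controlled error terms; the hypotheses \eqref{2.27}--\eqref{2.28} are precisely what is needed to keep those error terms small on the support of $h$. More concretely, I would fix $x\in M$ and work inside the geodesic ball $B_{\rho_0}(x)\subset N$, which by \eqref{2.28} lies strictly inside the injectivity radius, so the exponential map $\exp_x$ is a diffeomorphism onto its image and the distance function $r(y)=d_N(x,y)$ is smooth away from $x$.

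First I would establish a monotonicity inequality for the function
\[
\psi(r)=\frac{1}{\omega_m r^m}\int_{M\cap B_r(x)} h\,dM
\]
(or a suitable smoothed variant), in the spirit of the Euclidean case. The derivative $\psi'(r)$ is computed by slicing and differentiating the cut-off distance $r\wedge\rho$; this produces three kinds of terms: a ``good'' flux term coming from the tangential gradient of $h$, a mean-curvature term coming from the first variation of area, and a correction term coming from the fact that $\mathrm{Hess}\,r$ is not exactly the Euclidean projector $\mathrm{id}-dr\otimes dr$. Here I would invoke Hessian comparison: under $\bar K_\pi\le b^2$ one has
\[
\mathrm{Hess}\,r\;\ge\;\frac{b\cos(br)}{\sin(br)}\,(g_N-dr\otimes dr)
\]
(with the hyperbolic analogue for $b$ imaginary), which on $B_{\rho_0}$ gives $r\cdot\mathrm{Hess}\,r\ge (1-\epsilon(r))(g_N-dr\otimes dr)$ where $\epsilon(r)$ is controlled by $b^2r^2$. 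Condition \eqref{2.27} is exactly the quantitative form that makes this error $\le\alpha$, which is what lets the Euclidean-style monotonicity go through with a factor like $(1-\alpha)^{-1}$ on the right-hand side.

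Integrating the differential inequality from $0$ to $\rho_0$ then yields the pointwise estimate
\[
h(x)\;\lesssim\;\frac{1}{\omega_m\rho_0^m}\int_{M\cap B_{\rho_0}(x)}\bigl(|\nabla h|+h|\vec H|\bigr)\,r^{1-m}\,dM
\]
at every Lebesgue point of $h$. From this pointwise bound the passage to the $L^{m/(m-1)}$ Sobolev inequality \eqref{2.26} is the standard Michael--Simon covering/layer-cake trick: replace $h$ by $h^{m/(m-1)}$, use the pointwise bound, apply Fubini in $r$, and balance the exponents via a $5$-times-covering lemma on the geodesic balls $B_{\rho_0}(x)$ in $N$ (the $2^m$ factor in $c(m)$ and the need for the injectivity radius hypothesis \eqref{2.28} come from this covering step). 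The constant $c(m,\alpha)$ then assembles from the comparison constant $(1-\alpha)^{-1/m}$, the covering factor $2^m$, the sharp isoperimetric constant $\omega_m^{-1/m} m/(m-1)$, and the arc-length factor $\frac{\pi}{2}$ that converts the geodesic radius $b^{-1}\arcsin(b\,\cdot)$ into an effective Euclidean radius when $b$ is real.

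The main obstacle is the monotonicity step: in Euclidean space the proof hinges on the identity $\Delta_M|x-y|^2=2m$, which in $N$ becomes a Hessian comparison inequality with an error controlled by the sectional curvature. Carrying out the estimate cleanly so that the final constant depends only on $m$ and $\alpha$ (and not, for instance, on higher curvature bounds $\bar K_\alpha$) requires choosing the radius cut-off $\rho_0$ precisely as in \eqref{2.29} so that $b\rho_0\le\arcsin(b\rho_0)\le\pi/2$ and the comparison factor stays bounded; this is the reason the condition on $\mathfrak F_m(\varphi_0)$ in Theorem \ref{theorem1.1} is phrased in terms of $\bar R$ and $b$ rather than just curvature norms. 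Once the monotonicity is secured, the rest of the argument is a fairly mechanical adaptation of Michael--Simon, and I would not expect further surprises.
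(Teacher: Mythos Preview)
The paper does not prove this theorem at all: it is quoted verbatim from Hoffman--Spruck \cite{HS,HS1} (``For the sake of convenience and completeness, we would like to write it in the following''), and no argument is supplied. Your sketch is essentially the original Hoffman--Spruck adaptation of the Michael--Simon monotonicity argument to a curved ambient manifold, so in that sense your approach is the ``right'' one --- it is just not something the present paper undertakes. If you want to check your sketch against a source, compare it directly with \cite{HS,HS1} rather than with this paper.
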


We will also use the following Proposition \ref{lemma5.6}. The idea of its proof stems from Theorem 17.7 of \cite{S} directly.

\begin{pro}\label{lemma5.6}
Let $\varphi:M\longrightarrow N$ be an isometric immersion with $\dim M=n$ and $\dim N=n+1$. Let $\bar{R}$ be $N$'s injectivity radius and $\bar{K}_{\pi}$ be the sectional curvature of $N$. Assume that $\bar{R}$ is positive, $\bar{K}_{\pi}\leqslant b^2\leqslant1$ where $b$ is a positive real number or a pure imaginary one, and the mean curvature vector field of $\varphi$, denoted by $\vec{H}$, satisfies that $||\vec{H}||_{L^p(\mu)}\leqslant\Gamma$ for some $p\in(n,\infty)$. Then, for any $\xi\in N$, there hold that

 (1). when $b$ is a positive real number, for all $0<\sigma\leqslant\rho<\min\{\bar{R},\frac{\pi}{b}\}$,
\begin{equation}\label{5.15}
\Big[\frac{\mu(B_{\sigma}(\xi))}{(\sin b\sigma)^n}\Big]^{\frac{1}{p}}\leqslant\Big[\frac{\mu(B_{\rho}(\xi))}{(\sin b\rho)^n}\Big]^{\frac{1}{p}}
+\frac{\Gamma}{p}\int_{\sigma}^{\rho}(\sin b\tau)^{-\frac{n}{p}}\,d\tau,
\end{equation}

(2). when $b$ is a pure imaginary number, for all $0<\sigma\leqslant\rho<\bar{R}$,
\begin{equation}\label{5.16}
\Big[\frac{\mu(B_{\sigma}(\xi))}{\sigma^n}\Big]^{\frac{1}{p}}\leqslant\Big[\frac{\mu(B_{\rho}(\xi))}{\rho^n}\Big]^{\frac{1}{p}}
+\frac{\Gamma}{p-n}(\rho^{1-\frac{n}{p}}-\sigma^{1-\frac{n}{p}}),
\end{equation}
where
\begin{equation}\label{5.17}
\mu(B_{\sigma}(\xi)):=\int_{\{x\in M|d^N(\varphi(x),\xi)<\sigma\}}1\,dM.
\end{equation}
\end{pro}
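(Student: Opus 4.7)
The plan is to adapt the classical monotonicity-formula argument of Simon (Theorem 17.7 of \cite{S}), replacing the flat identity $\operatorname{Hess}(|y-\xi|^2/2)=g^{\mathbb{R}^{n+1}}$ by the Hessian comparison theorem under $\bar{K}_\pi\leq b^2$. Fix $\xi\in N$ and let $r(y)=d^N(y,\xi)$; the hypotheses $\bar{R}>0$ together with $\rho<\min\{\bar{R},\pi/b\}$ in the real case or $\rho<\bar{R}$ in the imaginary case ensure that $r$ is smooth on $B^N_\rho(\xi)\setminus\{\xi\}$ and away from focal points. Write $\mu(\sigma):=\mu(B_\sigma(\xi))$ as in $(\ref{5.17})$.

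The first step is to apply the divergence identity $(\ref{2.4})$ to the vector field $\tilde X=\chi(r)\nabla^N r$ along $\varphi$, where $\chi:[0,\infty)\to[0,\infty)$ is a smooth nonincreasing cutoff supported in $[0,\rho)$. A direct computation with a local $M$-orthonormal frame gives
\[
div^N_\varphi \tilde X \;=\; \chi'(r)|\nabla^M r|^2 \;+\; \chi(r)\,\tr_M(\operatorname{Hess}^N r).
\]
The Hessian comparison theorem under $\bar{K}_\pi\leq b^2$ yields $\operatorname{Hess}^N r\geq \mathfrak{c}_b(r)\,(g^N-dr\otimes dr)$, where $\mathfrak{c}_b(r)=b\cot(br)$ for $b$ real and $\mathfrak{c}_b(r)=|b|\coth(|b|r)\geq 1/r$ for $b$ imaginary, so that tracing over $TM$ and using $|\nabla^M r|\leq 1$ gives
\[
\tr_M(\operatorname{Hess}^N r)\;\geq\;\mathfrak{c}_b(r)\bigl(n-|\nabla^M r|^2\bigr).
\]
Substituting this lower bound into $(\ref{2.4})$, letting $\chi\uparrow\chi_{[0,\sigma)}$ by a standard approximation and using the coarea formula, one extracts the distributional inequality
\[
\mu'(\sigma)\;\geq\; n\,\tilde{\mathfrak{c}}_b(\sigma)\,\mu(\sigma)\;-\;\int_{B_\sigma(\xi)}|\vec{H}|\,d\mu,
\]
with $\tilde{\mathfrak{c}}_b(\sigma)=b\cot(b\sigma)$ (real case) or $1/\sigma$ (imaginary case, after using $|b|\coth(|b|\sigma)\geq 1/\sigma$).

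Setting $w_b(\sigma)=\sin(b\sigma)/b$ in the real case and $w_b(\sigma)=\sigma$ in the imaginary case, the above inequality rewrites as
\[
\frac{d}{d\sigma}\!\left[\frac{\mu(\sigma)}{w_b(\sigma)^n}\right]\;\geq\;-\frac{1}{w_b(\sigma)^n}\int_{B_\sigma(\xi)}|\vec{H}|\,d\mu.
\]
By H\"older's inequality, $\int_{B_\sigma}|\vec{H}|\,d\mu\leq \Gamma\,\mu(\sigma)^{(p-1)/p}$. Writing $F(\sigma):=\mu(\sigma)/w_b(\sigma)^n$, this becomes $F'(\sigma)\geq -\Gamma\,F(\sigma)^{(p-1)/p}w_b(\sigma)^{-n/p}$, equivalently
\[
\frac{d}{d\sigma}F(\sigma)^{1/p}\;\geq\;-\frac{\Gamma}{p}\,w_b(\sigma)^{-n/p}.
\]
Integrating from $\sigma$ to $\rho$ yields $(\ref{5.15})$ in the real case after absorbing the constant $b^{n/p}$ into the ratio, and $(\ref{5.16})$ in the imaginary case via $\int_\sigma^\rho \tau^{-n/p}d\tau=\tfrac{p}{p-n}(\rho^{1-n/p}-\sigma^{1-n/p})$.

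The principal obstacle is the careful justification of the cutoff limit $\chi\uparrow\chi_{[0,\sigma)}$ that produces the distributional differential inequality for $\mu(\sigma)$, in particular a secondary inner cutoff near the preimages of $\xi$ followed by dominated convergence; this is precisely the technical content of the classical monotonicity argument. The appearance of $\bar{K}_\pi\leq b^2$, $\bar{R}$, and (in the real case) $\pi/b$ in the hypotheses is dictated entirely by the Hessian comparison step: the curvature bound produces the pointwise lower bound on $\operatorname{Hess}^N r$, while $\rho<\min\{\bar{R},\pi/b\}$ (resp.\ $\rho<\bar{R}$) is the largest region on which $r$ is smooth and the comparison is valid.
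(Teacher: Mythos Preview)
Your overall strategy (tangential divergence identity + Hessian comparison under $\bar K_\pi\le b^2$ + H\"older + integration of a differential inequality) is exactly the paper's, which follows Simon and cites Lemma~3.6 of \cite{HS} for the comparison step. However, your intermediate distributional inequality
\[
\mu'(\sigma)\;\ge\;n\,\tilde{\mathfrak c}_b(\sigma)\,\mu(\sigma)-\int_{B_\sigma(\xi)}|\vec H|\,d\mu
\]
does not follow from the vector field $\tilde X=\chi(r)\nabla^N r$ you chose. With that choice the divergence identity plus the Hessian comparison yield
\[
\int_M\chi(r)\,\mathfrak c_b(r)\bigl(n-|\nabla^M r|^2\bigr)\,d\mu\;\le\;-\int_M\chi'(r)|\nabla^M r|^2\,d\mu+\int_M\chi(r)|\vec H|\,d\mu,
\]
and the factor $(n-|\nabla^M r|^2)$ cannot be bounded below by $n$ pointwise, so passing to $\chi\uparrow\chi_{[0,\sigma)}$ does \emph{not} produce $n\,\tilde{\mathfrak c}_b(\sigma)\,\mu(\sigma)$ on the left. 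Likewise the limiting boundary term is $\int_{\{r=\sigma\}}|\nabla^M r|\,d\mathcal H^{n-1}$, not $\mu'(\sigma)$ (though the inequality between them goes the right way).

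The paper repairs this by taking instead the test field $\gamma(r)\,r\,\nabla^N r$, i.e.\ with an extra factor of $r$. Then the relevant Hessian is that of $r^2/2$, for which the comparison (this is the content of Lemma~3.6 in \cite{HS}) gives directly $\operatorname{tr}_M\operatorname{Hess}^N(r^2/2)\ge n\,b\,r\cot(br)$ in the real case and $\ge n$ in the imaginary case, with the $|\nabla^M r|^2$ contributions already absorbed. One then sets $\gamma(r)=\phi(r/\rho)$ for a fixed cutoff $\phi$ and uses the scaling identity $-\gamma'(r)\,r=\rho\,\partial_\rho[\phi(r/\rho)]$ to turn the boundary term into $\rho I'(\rho)$ for $I(\rho)=\int_M\phi(r/\rho)\,d\mu$, obtaining the clean ODE $(d/d\rho)[I(\rho)/w_b(\rho)^n]^{1/p}\ge -(\Gamma/p)\,w_b(\rho)^{-n/p}$ for smooth $I$; the limit $\phi\uparrow\chi_{[0,1)}$ is taken only \emph{after} integration. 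This avoids both the $(n-|\nabla^M r|^2)$ loss and the delicate coarea passage you flagged as the ``principal obstacle''.
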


\begin{proof} We follow Simon's idea in \cite{S} to prove the proposition. For any $x\in M$, let
\begin{equation}\label{5.18}
r(x):=d^N(\varphi(x),\xi).
\end{equation}
In the following context, we also use $\mbox{grad}^Nr$ to denote $\mbox{grad}^Nd^N$. Take $\gamma\in C^1(\mathbb{R}^1)$ which satisfies $\gamma'(t)\leqslant0$; for $t\leqslant\frac{\rho}{2}$, $\gamma(t)=1$; for $t\geqslant\rho$, $\gamma(t)=0$. Denote $-H\cdot\nu$ by $\vec{H}$.
Since
\begin{equation}\label{5.19}
\int_Mdiv^N(\gamma(r)\cdot r\cdot \mbox{grad}^Nr)\,dM = -\int_M\langle\gamma(r)\cdot r\cdot \mbox{grad}^Nr|\vec{H}\rangle\,dM,
\end{equation}
we have
\begin{equation}\label{5.20}
\aligned
&\int_M\gamma(r)div^N(r\cdot \mbox{grad}^Nr)\,dM + \int_M\gamma'(r)\langle \mbox{grad}^Mr|r\cdot \mbox{grad}^Nr\rangle\,dM\\
=&-\int_M\gamma(r)\cdot r\cdot\langle \mbox{grad}^Nr|\vec{H}\rangle\,dM.
\endaligned
\end{equation}
It is easy to know that, since $\varphi$ is an isometric immersion, the following identity is obvious
\[
\langle\varphi_*(\mbox{grad}^Mr)\Big|\varphi_*(\frac{\partial}{\partial x^i})\rangle=\langle \mbox{grad}^Mr\Big|\frac{\partial}{\partial x^i}\rangle=\langle \mbox{grad}^Nr|\nabla_i\varphi\rangle,
\]
where $\varphi_*$ is the tangent map of $\varphi$. So
\[
P(\varphi)\Big(\mbox{grad}^Nr-\varphi_*(\mbox{grad}^Mr)\Big)=0,
\]
where $P(\varphi(p))$ is the orthogonal projection operator from $T_{\varphi(p)}N$ to $\varphi_*(T_pM)$ with $p\in M$. Therefore, we get
\[\varphi_*(\mbox{grad}^Mr)=P(\varphi)(\mbox{grad}^Nr).\]
It means that
\begin{equation}\label{5.21}
|\mbox{grad}^Mr|\leqslant|\mbox{grad}^Nr|=1.
\end{equation}
Lemma 3.6 of \cite{HS} tells us that
\begin{equation}\label{5.22}
\int_M\gamma(r)\cdot n\cdot b\cdot r\cdot \cot(br)\,dM\leqslant-\int_M\gamma'(r)r\,dM-\int_M\gamma(r)\cdot r\langle \mbox{grad}^Nr|\vec{H}\rangle\,dM.
\end{equation}
Next, we need to consider the following two cases.

\textbf{Case 1:} $b$ is a positive real number. In this situation, for $r\leqslant\rho$, we have
\begin{equation}\label{5.23}
n\cdot b\cdot r\cdot \cot(br)\geqslant n\cdot b\cdot \rho\cdot \cot(b\rho),
\end{equation}

\textbf{Case 2:} $b$ is a pure imaginary number. For this case we have
\begin{equation}\label{5.24}
n\cdot b\cdot r\cdot \cot(br):=n|b|r\cdot \coth(|b|r)\geqslant n.
\end{equation}

For Case 1, substituting $(\ref{5.23})$ into $(\ref{5.22})$ we get
\begin{equation}\label{5.25}
n\cdot b\cdot \rho\cdot \cot(b\rho)\int_M\gamma(r)\,dM\leqslant-\int_M\gamma'(r)r\,dM-\int_M\gamma(r)\cdot r\langle \mbox{grad}^Nr|\vec{H}\rangle\,dM.
\end{equation}
Now, choose $\phi\in C^1(\mathbb{R}^1)$ which satisfies that $\phi(t)=0$ for $t\geqslant1$, $\phi(t)=1$ for $t\leqslant\frac{1}{2}$, and $\phi'(t)\leqslant0$ for all $t$.

Since $\gamma$ is arbitrary, we set
\begin{equation}\label{5.26}
\gamma(r):=\phi\Big(\frac{r}{\rho}\Big),
\end{equation}
\begin{equation}\label{5.27}
I(\rho):=\int_M\phi\Big(\frac{r}{\rho}\Big)\,dM,
\end{equation}
and
\begin{equation}\label{5.28}
L(\rho):=\int_M\phi\Big(\frac{r}{\rho}\Big)\langle r\cdot\mbox{grad}^Nr|\vec{H}\rangle\,dM.
\end{equation}
Note that
\begin{equation}\label{5.29}
\gamma'(r)\cdot r=-\rho\cdot\frac{\partial}{\partial\rho}\Big[\phi\Big(\frac{r}{\rho}\Big)\Big].
\end{equation}
Substituting $(\ref{5.26}),(\ref{5.27}),(\ref{5.28})$ and $(\ref{5.29})$ into $(\ref{5.25})$ gives rise to
\begin{equation}\label{5.30}
n\cdot b\cdot\rho\cdot \cot(b\rho)I(\rho)-\rho\cdot I'(\rho)\leqslant-L(\rho),
\end{equation}
which is equivalent to
\begin{equation}\label{5.31}
\frac{d}{d\rho}\Big[\frac{I(\rho)}{(\sin b\rho)^n}\Big]\geqslant\frac{L(\rho)}{(\sin b\rho)^n\rho}.
\end{equation}
Hence
\begin{equation}\label{5.32}
\aligned
\frac{d}{d\rho}\Big[\frac{I(\rho)}{(\sin b\rho)^n}\Big]
\geqslant&-\frac{\int_M\phi(r/\rho)\cdot\rho\cdot|\vec{H}|\,dM}{(\sin b\rho)^n\rho}\\
\geqslant&-\frac{(\int_M|\vec{H}|^p\,dM)^{\frac{1}{p}}(\int_M\phi(r/\rho)^{\frac{p}{p-1}}\,dM)^{\frac{p-1}{p}}}{(\sin b\rho)^n}.
\endaligned
\end{equation}
Since $0\leqslant\phi\leqslant1$, recalling the definition of $I(\rho)$ yields
\begin{equation}\label{5.33}
\frac{d}{d\rho}\Big[\frac{I(\rho)}{(\sin b\rho)^n}\Big]\geqslant-\Gamma\frac{I(\rho)^{\frac{p-1}{p}}}{(\sin b\rho)^n}.
\end{equation}
Furthermore
\begin{equation}\label{5.34}
\frac{d}{d\rho}\Big\{\Big[\frac{I(\rho)}{(\sin b\rho)^n}\Big]^{\frac{1}{p}}\Big\}\geqslant-\frac{\Gamma}{p}(\sin b\rho)^{-\frac{n}{p}}.
\end{equation}
Integrating on $[\sigma,\rho]$, we have
\begin{equation}\label{5.35}
\Big[\frac{I(\rho)}{(\sin b\rho)^n}\Big]^{\frac{1}{p}}-\Big[\frac{I(\sigma)}{(\sin b\sigma)^n}\Big]^{\frac{1}{p}}\geqslant-\frac{\Gamma}{p}\int_{\sigma}^{\rho}(\sin b\tau)^{-\frac{n}{p}}\,d\tau.
\end{equation}
Letting $\phi$ increasingly tends to the characteristic function of $(-\infty,1)$, we get $(\ref{5.15})$.
\medskip

For Case 2, substituting $(\ref{5.24})$ into $(\ref{5.22})$ gives
\begin{equation}\label{5.36}
n\int_M\gamma(r)\,dM\leqslant-\int_M\gamma'(r)r\,dM-\int_M\gamma(r)\cdot r\langle \mbox{grad}^Nr|\vec{H}\rangle\,dM.
\end{equation}
Let $\gamma$, $I$ and $L$ be the same as $(\ref{5.26})$, $(\ref{5.27})$ and $(\ref{5.28})$. Using similar method as we infer $(\ref{5.31})$, one can obtain
\begin{equation}\label{5.37}
\frac{d}{d\rho}\Big[\frac{I(\rho)}{\rho^n}\Big]\geqslant\frac{L(\rho)}{\rho^{n+1}}.
\end{equation}
Then we get $(\ref{5.16})$ by using H\"{o}lder inequality, integrating on $[\sigma,\rho]$ and letting $\phi$ increasingly tends to the characteristic function of $(-\infty,1)$. This completes the proof.
\end{proof}

Following directly the idea in Section 18 of \cite{S} we also have the following proposition which will be used in the sequel.
\begin{pro}\label{lemma5.7}
Let $\varphi:M\longrightarrow N$ be an isometric immersion with $\dim M=n$ and $\dim N=n+1$. Let $\bar{R}$ be $N$'s injectivity radius, $\bar{K}_{\pi}$ be the sectional curvature of $N$
and $\vec{H}$ be the mean curvature vector field of $\varphi$. Assume that $\bar{R}$ is positive, $\bar{K}_{\pi}\leqslant b^2\leqslant1$ where $b$ is a positive real number or a pure imaginary one. Then, for $h\in C^1(M)$ and $h\geqslant0$, there hold that

(1). when $b$ is real, for all $0<\sigma\leqslant\rho<\min\{\bar{R},\frac{\pi}{b}\}$, we have
\begin{equation}\label{5.38}
\frac{\int_{B_{\sigma}(\xi)}h\,dM}{(\sin b\sigma)^n}\leqslant\frac{\int_{B_{\rho}(\xi)}h\,dM}{(\sin b\rho)^n}+\int_{\sigma}^{\rho}d\tau\cdot\tau^{-1}(\sin b\tau)^{-n}\int_{B_{\tau}(\xi)}r(|\nabla h|+h|\vec{H}|)\,dM;
\end{equation}

(2). when $b$ is imaginary, for all $0<\sigma\leqslant\rho<\bar{R}$, we have
\begin{equation}\label{5.39}
\frac{\int_{B_{\sigma}(\xi)}h\,dM}{\sigma^n}\leqslant\frac{\int_{B_{\rho}(\xi)}h\,dM}{\rho^n}+\int_{\sigma}^{\rho}d\tau\cdot\tau^{-n-1}\int_{B_{\tau}(\xi)}r(|\nabla h|+h|\vec{H}|)\,dM,
\end{equation}
where $\xi\in N$ and $B_{\rho}(\xi):=\{x\in M|d^N(\varphi(x),\xi)<\rho\}$.
\end{pro}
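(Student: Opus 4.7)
The plan is to mimic the proof of Proposition \ref{lemma5.6} but insert the test function $h$ as an extra weight in the radial vector field. Fix $\gamma\in C^1(\mathbb{R})$ with $\gamma'\leqslant 0$, $\gamma\equiv 1$ on $(-\infty,\rho/2]$ and $\gamma\equiv 0$ on $[\rho,+\infty)$, and apply the divergence identity $(\ref{2.4})$ to the compactly supported vector field along $\varphi$, namely $\tilde X:=h\,\gamma(r)\,r\,\mbox{grad}^Nr$, where $r(x):=d^N(\varphi(x),\xi)$. Expanding $div^N_\varphi\tilde X$ by the Leibniz rule yields three bulk terms; exactly as in Proposition \ref{lemma5.6}, Lemma 3.6 of \cite{HS} furnishes the Hessian lower bound for the $div^N(r\,\mbox{grad}^Nr)$ factor, $(\ref{5.21})$ controls the $\gamma'$-term, and the boundary term on the right of $(\ref{2.4})$ is absorbed by $|\vec H|$. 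The only genuinely new contribution compared with Proposition \ref{lemma5.6} is the extra Leibniz term $\gamma(r)\langle\mbox{grad}^Mh\,|\,r\,\mbox{grad}^Nr\rangle$, which is controlled by Cauchy--Schwarz together with $|\mbox{grad}^Mr|\leqslant|\mbox{grad}^Nr|=1$ to yield the pointwise bound $r|\nabla h|$. The net outcome is the weighted analogue of $(\ref{5.22})$:
\[
\int_M h\,\gamma(r)\,n\,b\,r\,\cot(br)\,dM\leqslant-\int_M h\,\gamma'(r)\,r\,dM+\int_M\gamma(r)\,r(|\nabla h|+h|\vec H|)\,dM.
\]

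Next I would specialize $\gamma(r):=\phi(r/\rho)$ as in $(\ref{5.26})$, set $I_h(\rho):=\int_M h\,\phi(r/\rho)\,dM$, and use $(\ref{5.29})$ to rewrite $-\int_M h\,\gamma'(r)\,r\,dM$ as $\rho\,I_h'(\rho)$. For Case~(1), the monotonicity of $t\mapsto bt\cot(bt)$ on $(0,\pi/b)$ gives $nbr\cot(br)\geqslant nb\rho\cot(b\rho)$ on $\mathrm{supp}\,\gamma$, so the preceding display becomes
\[
nb\rho\cot(b\rho)\,I_h(\rho)-\rho\,I_h'(\rho)\leqslant\int_{B_\rho(\xi)}r(|\nabla h|+h|\vec H|)\,dM.
\]
Multiplication by $(\sin b\rho)^{-n}/\rho$ converts the left-hand side into $-\frac{d}{d\rho}\bigl[I_h(\rho)/(\sin b\rho)^n\bigr]$, and integration from $\sigma$ to $\rho$ followed by the standard limit $\phi\nearrow\chi_{(-\infty,1)}$ yields $(\ref{5.38})$. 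Case~(2) is structurally identical: the pointwise lower bound $n|b|r\coth(|b|r)\geqslant n$ replaces the cotangent estimate, and multiplication by $\rho^{-n-1}$ realizes the left-hand side as $-\frac{d}{d\rho}[I_h(\rho)/\rho^n]$, giving $(\ref{5.39})$.

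No fundamentally new idea is required beyond the proof of Proposition \ref{lemma5.6}. The only subtlety is the Leibniz contribution $\langle\mbox{grad}^Mh\,|\,r\,\mbox{grad}^Nr\rangle$, which has no \emph{a priori} sign; once it is routed to the right-hand side using $|\mbox{grad}^Mr|\leqslant1$ and Cauchy--Schwarz, the curvature-comparison-plus-ODE mechanism that produced $(\ref{5.15})$--$(\ref{5.16})$ carries over verbatim and delivers the desired weighted monotonicity $(\ref{5.38})$--$(\ref{5.39})$.
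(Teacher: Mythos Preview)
Your proposal is correct and follows essentially the same approach as the paper's own proof: both apply the divergence identity $(\ref{2.4})$ to the weighted radial field $h\,\gamma(r)\,r\,\mbox{grad}^Nr$, invoke Lemma~3.6 of \cite{HS} for the Hessian comparison together with $(\ref{5.21})$ to arrive at the weighted analogue of $(\ref{5.22})$, and then specialize $\gamma(r)=\phi(r/\rho)$ to convert the inequality into a differential inequality for $I_h(\rho)/(\sin b\rho)^n$ (respectively $I_h(\rho)/\rho^n$) before integrating and passing to the limit. The only cosmetic difference is notation (the paper writes $\tilde I$, $\tilde L$ for your $I_h$ and the right-hand integrand).
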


\begin{proof}
Let $r$, $\gamma$, $\phi$ be the same as those in proof of Proposition \ref{lemma5.6}. Since
\begin{equation}
\int_Mdiv^N(h\cdot\gamma(r)\cdot r\cdot \mbox{grad}^Nr)\,dM=-\int_M\langle h\cdot\gamma(r)\cdot r\cdot \mbox{grad}^Nr|\vec{H}\rangle\,dM,
\end{equation}
simple calculation gives
\begin{equation}
\aligned
\int_M h\cdot\gamma(r)div^N(r\cdot \mbox{grad}^Nr)\,dM
=&-\int_M\langle \mbox{grad}^Mh\cdot\gamma(r)|r\cdot \mbox{grad}^Nr\rangle\,dM\\
&-\int_M h\cdot\gamma(r)\cdot r\cdot\langle \mbox{grad}^Nr|\vec{H}\rangle\,dM\\
&-\int_M\langle h\cdot\gamma'(r)\cdot \mbox{grad}^Mr|r\cdot \mbox{grad}^Nr\rangle\,dM.
\endaligned
\end{equation}
Because of Lemma 3.6 of \cite{HS} and $(\ref{5.21})$, readers can see
\begin{equation}\label{5.42'}
\aligned
&\int_M\gamma(r)\cdot n\cdot b\cdot r\cdot \cot(br)h\,dM\\
\leqslant&-\int_Mh\gamma'(r)r\,dM+\int_Mh\gamma(r)\cdot r|\vec{H}|\,dM+\int_M|\nabla h|\gamma(r)r\,dM.
\endaligned
\end{equation}
When $b$ is real, for $\rho<\min\{\frac{\pi}{b},\bar{R}\}$, from $(\ref{5.23})$ we have
\begin{equation}
\aligned
&n\cdot b\cdot \rho\cdot \cot(b\rho)\int_M\gamma(r)h\,dM\\
\leqslant&-\int_Mh\gamma'(r)r\,dM+\int_M\gamma(r)r(|\nabla h|+h|\vec{H}|)\,dM.
\endaligned
\end{equation}
Let
\begin{equation}
\tilde{I}(\rho):=\int_M\phi(r/\rho)h\,dM
\end{equation}
and
\begin{equation}
\tilde{L}(\rho):=\int_M\phi(r/\rho)r(|\nabla h|+h|\vec{H}|)\,dM.
\end{equation}
Because of $(\ref{5.29})$, we get
\begin{equation}
n\cdot b\cdot\rho\cdot \cot(b\rho)\tilde{I}(\rho)-\rho\cdot \tilde{I}'(\rho)\leqslant\tilde{L}(\rho)
\end{equation}
which is equivalent to
\begin{equation}
-\frac{d}{d\rho}\Big[\frac{\tilde{I}(\rho)}{(\sin b\rho)^n}\Big]\leqslant\frac{\tilde{L}(\rho)}{(\sin b\rho)^n\rho}.
\end{equation}
Integrating on $[\sigma,\rho]$, one can easily know
\begin{equation}
\frac{\tilde{I}(\sigma)}{(\sin b\sigma)^n}-\frac{\tilde{I}(\rho)}{(\sin b\rho)^n}\leqslant\int_{\sigma}^{\rho}\tau^{-1}(\sin b\tau)^{-n}\tilde{L}(\tau)\,d\tau.
\end{equation}
Letting $\phi$ increasingly tends to the characteristic function of $(-\infty,1]$, we obtain $(\ref{5.38})$.

When $b$ is imaginary, substituting $(\ref{5.24})$ into $(\ref{5.42'})$ and using a similar argument with the above, we derive $(\ref{5.39})$. This completes the proof.
\end{proof}

\begin{rem}
Proposition \ref{lemma5.6} and Proposition \ref{lemma5.7} do not require that $\varphi:M\longrightarrow N$ is an embedding. This is the difference between our proof of Lemma \ref{lemma5.8} and that of Proposition 6.2 in \cite{M}. Thanks to the difference, we do not have to construct an embedding as Mantegazza shows Proposition 6.2 in \cite{M}, when we prove Lemma \ref{lemma5.8}.
\end{rem}

We also need to recall the following Gagliardo-Nirenberg type inequality which established in \cite{H} (also see \cite{C, M}).
\begin{pro}\label{lemma5.1}
Assume that $(\tilde{M},g)$ is smooth, compact, without boundary, $\dim\tilde{M}=n$. Then there exists a constant $C=C(s,n)$ such that for any smooth section $T$ of a vector bundle $(E,\pi,\tilde{M})$ and for all $1\leqslant j\leqslant s$, there holds
\[
\int_{\tilde{M}}|\nabla^jT|^2\,d\tilde{M}\leqslant C\left(\int_{\tilde{M}}|\nabla^sT|^2\,d\tilde{M}\right)^{\frac{j}{s}}\left(\int_{\tilde{M}}|T|^2\,d\tilde{M}\right)^{1-\frac{j}{s}}.
\]
Here the constant $C$ depends not on the metric or the geometry of $\tilde{M}$.
\end{pro}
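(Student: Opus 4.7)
The plan is to reduce the claim to a three-term ``log-convexity'' inequality that is insensitive to the geometry of $\tilde{M}$, and then to iterate it combinatorially. The observation which makes the final constant geometry-free is that one performs only a single integration by parts at each step, so no commutators of covariant derivatives arise and therefore no curvature tensor enters.

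The first step is to establish, for any smooth section $U$ of $(E,\pi,\tilde M)$ and every $0\leqslant k\leqslant s-2$, the base inequality
\[
\int_{\tilde M}|\nabla^{k+1}U|^{2}\,d\tilde M\;\leqslant\;\sqrt{n}\,\left(\int_{\tilde M}|\nabla^{k}U|^{2}\,d\tilde M\right)^{1/2}\left(\int_{\tilde M}|\nabla^{k+2}U|^{2}\,d\tilde M\right)^{1/2}.
\]
Indeed, setting $V:=\nabla^{k}U$ and using that $\partial\tilde M=\varnothing$, the divergence theorem yields $\int_{\tilde M}|\nabla V|^{2}\,d\tilde M=-\int_{\tilde M}\langle V,\,\nabla^{*}\nabla V\rangle\,d\tilde M$, where $\nabla^{*}\nabla V=-g^{ab}\nabla_{a}\nabla_{b}V$ is the connection (rough) Laplacian. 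Since only one integration by parts is performed, no reordering of covariant derivatives is required and no curvature of $\tilde M$ appears. Combined with the pointwise trace--Frobenius bound $|g^{ab}\nabla_{a}\nabla_{b}V|^{2}\leqslant n\,|\nabla^{2}V|^{2}$ (a direct consequence of Cauchy--Schwarz applied to a trace in any orthonormal frame) and Cauchy--Schwarz in $L^{2}$, one obtains the asserted three-term inequality, with constant $\sqrt{n}$ depending only on the dimension.

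Having this base inequality, I would complete the proof by iteration. Set $a_{j}:=\int_{\tilde M}|\nabla^{j}T|^{2}\,d\tilde M$, so that the base inequality reads $a_{j}^{2}\leqslant n\,a_{j-1}\,a_{j+1}$ for every $1\leqslant j\leqslant s-1$; equivalently, the sequence $j\mapsto \log a_{j}$ has discrete second difference bounded below by $-\log n$. Applying the discrete maximum principle, comparing with the auxiliary quadratic $v_{j}=\tfrac{1}{2}j(s-j)\log n$ (which satisfies $v_{j-1}+v_{j+1}-2v_{j}=-\log n$ and $v_{0}=v_{s}=0$), one deduces
\[
\log a_{j}\;\leqslant\;\Bigl(1-\tfrac{j}{s}\Bigr)\log a_{0}+\tfrac{j}{s}\log a_{s}+\tfrac{j(s-j)}{2}\log n,
\]
which exponentiates to the desired interpolation with explicit constant $C=n^{j(s-j)/2}$, depending only on $n$, $j$ and $s$.

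The only genuine obstacle in this scheme is the bookkeeping for the iteration: one must verify that the exponents in the geometric mean telescope correctly to $1-j/s$ and $j/s$, and that the accumulated multiplicative factor stays independent of the metric. Both are handled uniformly by the maximum-principle comparison with $v_{j}$. All the rest is a one-shot integration by parts plus an elementary trace--Frobenius bound, which is precisely what prevents any geometric invariant of $\tilde M$ (curvature, injectivity radius, volume, etc.) from appearing in the final constant.
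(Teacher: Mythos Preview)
Your argument is correct and is precisely the approach the paper invokes: the paper does not give its own proof but refers to Corollary~12.7 of Hamilton~\cite{H}, whose scheme is exactly the single integration-by-parts base inequality $\|\nabla V\|_2^2\leqslant \sqrt{n}\,\|V\|_2\|\nabla^2 V\|_2$ followed by iteration (log-convexity) to obtain the interpolated exponents, with a metric-independent constant. Apart from a harmless sign slip in the line $\int|\nabla V|^2=-\int\langle V,\nabla^*\nabla V\rangle$ (the adjoint identity gives a plus sign, but the subsequent Cauchy--Schwarz step is unaffected), the write-up is complete and even yields the explicit constant $C=n^{j(s-j)/2}$.
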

For the proof of this proposition we refer to Corollary 12.7 of \cite{H} and The following proposition has also been proved in Corollary 12.6 of \cite{H}.

\begin{pro}\label{lemma5.2}
Assume that $(\tilde{M},g)$ is smooth, compact, without boundary, $\dim\tilde{M}=n$. Then there exists a constant $C=C(s,n)$ such that for any smooth section $T$ of a vector bundle $(E,\pi,\tilde{M})$ and for all $1\leqslant j\leqslant s-1$, there holds
\[
\int_{\tilde{M}}|\nabla^jT|^{\frac{2s}{j}}\,d\tilde{M}\leqslant C||T||_{L^{\infty}(\tilde{M})}^{2(\frac{s}{j}-1)}\int_{\tilde{M}}|\nabla^sT|^2\,d\tilde{M}.
\]
\end{pro}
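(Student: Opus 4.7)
The plan is to imitate Hamilton's classical interpolation argument, which hinges on a log-convexity estimate for the $L^{2s/j}$ norms of the iterated covariant derivatives. Write $J_j := \|\nabla^j T\|_{2s/j}$, so that $J_0 = \|T\|_{L^\infty(\tilde M)}$ (formally) and $J_s = \|\nabla^s T\|_{L^2(\tilde M)}$. The stated inequality is equivalent to the log-linear interpolation $J_j \leqslant C\,J_0^{(s-j)/s}\,J_s^{\,j/s}$ for $1\leqslant j\leqslant s-1$, after raising both sides to the power $2s/j$.

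The first step is to derive the fundamental pointwise-integrated identity by integration by parts. Starting from
\[
\int_{\tilde M}|\nabla^j T|^{2s/j}\,d\tilde M = \int_{\tilde M}|\nabla^j T|^{2s/j-2}\,\langle \nabla^j T,\nabla^j T\rangle\,d\tilde M,
\]
one absorbs one factor of $\nabla^j T$ into a covariant divergence; since $\tilde M$ is closed, the boundary term vanishes. Expanding via the Leibniz rule produces two contributions: one where the derivative lands on the remaining $\nabla^j T$, giving $\nabla^{j+1} T$ paired with $\nabla^{j-1} T$, and one where the derivative lands on the scalar factor $|\nabla^j T|^{2s/j-2}$, for which Kato's inequality $|\nabla|\nabla^j T||\leqslant |\nabla^{j+1} T|$ gives the same qualitative bound. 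Collecting constants, one obtains
\[
\int_{\tilde M}|\nabla^j T|^{2s/j}\,d\tilde M \leqslant C(s,j)\int_{\tilde M}|\nabla^{j-1} T|\cdot |\nabla^j T|^{2s/j-2}\cdot |\nabla^{j+1} T|\,d\tilde M.
\]

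The second step is a Hölder inequality on the right-hand side with the triple of conjugate exponents $\bigl(2s/(j-1),\,s/(s-j),\,2s/(j+1)\bigr)$; it is immediate to check that these sum to $1$ in reciprocals and that $(2s/j - 2)\cdot s/(s-j) = 2s/j$, so the middle factor reproduces $\int|\nabla^j T|^{2s/j}$. Absorbing that factor to the left, a short algebraic rearrangement yields the clean log-convexity bound
\[
J_j^{2} \leqslant C(s,j)\, J_{j-1}\, J_{j+1}, \qquad 1\leqslant j\leqslant s-1.
\]
At the endpoint $j=1$ the quantity $J_0$ appears through the naive bound $|\nabla^{0}T|=|T|\leqslant\|T\|_{L^\infty}$ that is already used pointwise before Hölder is applied.

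The third step is to iterate this log-convexity. A standard induction (or a direct interpolation of sequences) turns $J_j^2 \leqslant C J_{j-1}J_{j+1}$, together with $J_0 = \|T\|_{L^\infty(\tilde M)}$ and $J_s = \|\nabla^s T\|_{L^2(\tilde M)}$, into the linear interpolation $J_j \leqslant C\,J_0^{(s-j)/s}\,J_s^{\,j/s}$ with a constant depending only on $s$ and $n$. Raising to the $2s/j$-th power gives exactly the stated inequality. The main technical point I expect to be slightly delicate is the integration-by-parts step for a general tensor bundle section: one must carefully control the ``Kato defect'' term $\nabla|\nabla^j T|^{2s/j-2}$ so that it is dominated by $|\nabla^j T|^{2s/j-3}|\nabla^{j+1} T|$ with a uniform constant, and verify that no curvature of $\tilde M$ enters the final constant — which is true precisely because one never commutes covariant derivatives in this argument, only differentiates and integrates by parts. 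Everything else is a routine application of Hölder's inequality and iteration.
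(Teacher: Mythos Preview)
The paper does not give its own proof of this proposition; it simply cites Corollary~12.6 of Hamilton's paper~\cite{H}. Your outline reproduces exactly Hamilton's original argument---integration by parts to obtain $\int|\nabla^jT|^{2s/j}\leqslant C\int|\nabla^{j-1}T|\,|\nabla^jT|^{2s/j-2}\,|\nabla^{j+1}T|$, H\"older with the triple $\bigl(\tfrac{2s}{j-1},\tfrac{s}{s-j},\tfrac{2s}{j+1}\bigr)$, and then log-convexity iteration---so your approach coincides with the referenced proof.
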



\section{Small time existence and Uniform A Priori Estimates}\label{section4}
Suppose that $\varphi_0:M\longrightarrow N$ is a smooth immersion. We want to look for a smooth map $\varphi:M\times[0,T)\longrightarrow N$ for some $T>0$ such that
\begin{equation}\label{4.1}
\left\{
\begin{array}{llll}
\aligned
&\frac{\partial\varphi}{\partial t}(p,t)=-E_m(\varphi_t)(p)\nu(p,t),\\
&\varphi(\cdot,0)=\varphi_0
\endaligned
\end{array}
\right.
\end{equation}
and for every $t\in[0,T)$, $\varphi_t$ is an immersion. If such a solution exists, we say that the submanifold $M_t:=(M,g_t)$, where $g_t$ is obtained by pulling back the metric $\langle\cdot|\cdot\rangle$ of $N$ via $\varphi_t$, evolves by the negative gradient flow of the functional $\mathfrak{F}_m$.

The following theorem is due to \cite{HuP}. For the details we also refer to Theorem 2.5.2 of Section 2 in \cite{Po2}.
\begin{thm}\label{theorem 4.1}
For any smooth submanifold immersion $\varphi_0:M^n\longrightarrow N^{n+1}$, there exists a unique solution to the flow problem
\[
\frac{\partial\varphi}{\partial t}=[(-1)^{s+1}\Delta^sH+\Phi(\varphi,\nabla\varphi,\nu,A,\nabla A,\cdots,\nabla^{2s-1}A)]\nu
\]
defined on some interval $0\leqslant t<T$ and taking $\varphi_0$ as its initial value.
\end{thm}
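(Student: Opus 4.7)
The plan is to reduce the problem to a standard short-time existence result for a quasilinear parabolic system of order $2s+2$ on the compact manifold $M$. Since the statement is attributed to Huisken--Polden, the essential route has three stages: (i) analyze the principal part of the spatial operator, (ii) break the diffeomorphism invariance to obtain strict parabolicity, and (iii) invoke standard parabolic theory on the closed manifold $M$.

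First, I would linearize the right-hand side of the flow at the initial immersion $\varphi_0$ and compute the principal symbol. The leading term $(-1)^{s+1}\Delta^sH\cdot\nu$ contains $2s+2$ spatial derivatives of $\varphi$, and by the Simon-type identity \eqref{2.11} together with the Gauss--Weingarten relations \eqref{2.8}--\eqref{2.9}, a direct calculation shows that, acting on a normal variation $u\nu_0$, the principal symbol reads $(-1)^{s+1}|\xi|^{2s+2}u\,\nu_0$ in local coordinates on $M$. Purely tangential variations correspond to reparametrizations of the image and produce a degenerate direction of the symbol, which is the usual geometric obstruction to strict parabolicity of intrinsically defined flows.

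Second, to bypass this degeneracy I would restrict attention to normal graphs over $\varphi_0$: write
\[
\varphi(p,t)=\exp^N_{\varphi_0(p)}\bigl(u(p,t)\nu_0(p)\bigr)
\]
for a scalar function $u$ on $M$ with $u(\cdot,0)=0$, and derive the resulting scalar quasilinear PDE for $u$. Its top-order part is, up to sign, $\partial_t u+\Delta_0^{s+1}u+\mathcal{N}(u,\nabla u,\dots,\nabla^{2s+1}u)$, where $\Delta_0$ is the Laplace--Beltrami operator of $\varphi_0$ and $\mathcal{N}$ is a smooth nonlinearity of lower order, whose smooth dependence uses that $N$ has bounded geometry and positive injectivity radius. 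This is a scalar quasilinear parabolic equation of order $2s+2$ on a closed manifold, to which the classical theory (Eidelman--Solonnikov, Amann's semigroup approach, or the framework developed in \cite{Po2}) directly applies, producing a unique smooth solution on some interval $[0,T)$.

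Third, I would verify that the map $\varphi$ obtained from $u$ is an immersion for small $t$ (immediate because $d\varphi_t\to d\varphi_0$ uniformly as $t\to0$, and $\varphi_0$ is an immersion on the compact $M$), and that it genuinely solves the prescribed flow; the latter is automatic here since the ansatz is already in normal-graph form, so no pullback by a time-dependent diffeomorphism is needed. Uniqueness in the geometric sense follows from standard parabolic uniqueness for $u$, together with the observation that any two smooth solutions coincide as normal graphs on a short time interval. The main obstacle is the careful computation of the principal symbol and the verification that the full nonlinearity $\mathcal{N}$ remains a smooth function of its arguments under the standing geometric assumptions on $N$; once that is in hand, the rest is routine parabolic theory on a compact manifold as carried out in \cite{HuP, Po2, M1}.
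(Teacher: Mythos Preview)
Your proposal is correct and follows the standard Huisken--Polden route. Note, however, that the paper does not supply its own proof of this theorem: it is stated with the attribution ``The following theorem is due to \cite{HuP}'' and is used as a black box, with the short-time existence of the gradient flow of $\mathfrak{F}_m$ obtained as a special case (see also the citations to \cite{Po2} and \cite{M1} in the surrounding discussion). Your sketch therefore reconstructs precisely the argument in those references rather than diverging from anything the paper itself does.
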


From $(\ref{2.8})$ and $(\ref{2.9})$, we have
\begin{equation}\label{4.2}
\nabla\nu=A\ast\nabla\varphi
\end{equation}
and
\begin{equation}\label{4.3}
\nabla^2\varphi=-A\otimes\nu.
\end{equation}
With an argument of induction, one can obtain following important relations: for $s\geqslant2$
\begin{equation}\label{4.4}
\nabla^s\nu=\mathfrak{q}^s(A)\ast\nabla\varphi+\mathfrak{q}^s(A)\otimes\nu,
\end{equation}
and for $s\geqslant3$
\begin{equation}\label{4.5}
\nabla^s\varphi=\mathfrak{q}^{s-1}(A)\ast\nabla\varphi+\mathfrak{q}^{s-1}(A)\otimes\nu.
\end{equation}

By the remark below Theorem $4.1$ in page 153 of \cite{M}, we can let $\Phi$ depend on the metric $g$. So $\Phi$ also depends on $g^{-1}$. Hence there is a unique local smooth solution to the following flow:
\begin{equation}\label{4.6}
\left\{
\begin{array}{llll}
\frac{\partial\varphi}{\partial t}=[(-1)^{m+1}\Delta^mH+ \Phi(\varphi, g^{-1}, \nu, A)]\nu,\\
\varphi(\cdot,0)=\varphi_0:M^n\longrightarrow N^{n+1}.
\end{array}
\right.
\end{equation}
Here
$$\Phi(\varphi, g^{-1}, \nu, A)\equiv\Phi(\varphi,\nabla\varphi,\cdots,\nabla^{2m+1}\varphi,g^{-1},\nu,\cdots,\nabla^{2m}\nu,A,\nabla A,\cdots,\nabla^{2m-1}A).$$
Therefore $(\ref{4.1})$ has a unique local smooth solution and when $t$ is sufficiently small, $\varphi(t)$ is an immersion.

Let $\varphi_t\equiv \varphi(t)$ be the flow defined in Section \ref{section4} and $g_t$ be the metric on $M$ obtained by pulling back $\langle\cdot|\cdot\rangle$ via $\varphi_t$. $(M,g_t)$ is denoted by $M_t$. As described in the introduction, in order to control $||\nabla^pA||_{L^{\infty}(M_t)}$ by $||A||_{W^{k,2}(M_t)}$ we need to establish some Sobolev type interpolation inequalities. Because $M_t$ is time-varying, we will require that the constants in these inequalities are universal.

By the very definition of the flow
\begin{equation}\label{5.1}
\frac{d}{dt}\mathfrak{F}_m(\varphi_t)=-\int_M[E_m(\varphi_t)]^2\,d\mu_t\leqslant0.
\end{equation}
Hence, as long as the flow remains smooth, we have the uniform estimate
\begin{equation}\label{5.2}
\int_M(1+|\nabla^m\nu|^2)\,d\mu_t\leqslant\mathfrak{F}_m(\varphi_0)
\end{equation}
for every $t\geqslant0$.

Throughout the later proof, the upper bound of $||H||_{n+1}$ is crucial. It is a precondition of many theorems. So we are going to estimate it.

In Proposition \ref{lemma5.2}, taking $(\tilde{M},g)=M_t$, $T=\nu$, $j=1$ and $s=m$, since $|\nu|=1$, we have $||\nu||_{L^{\infty}(M_t)}=1$ and
\begin{equation}\label{5.3}
||\nabla\nu||_{2m}\lesssim||\nabla^m\nu||_2^{\frac{1}{m}}.
\end{equation}
By a simple calculation, it is easy to know
\begin{equation}\label{5.4}
|\nabla\nu|=|A|.
\end{equation}
So
\begin{equation}\label{5.5}
||A||_{2m}\lesssim||\nabla^m\nu||_2^{\frac{1}{m}}\leqslant\mathfrak{F}_m(\varphi_t)^{\frac{1}{2m}}\leqslant\mathfrak{F}_m(\varphi_0)^{\frac{1}{2m}}.
\end{equation}
Hence by H\"{o}lder inequality,
\begin{equation}\label{5.6}
||A||_{n+1}\leqslant||A||_{2m}\cdot(\mbox{vol}(M_t))^{\frac{2m-n-1}{2m(n+1)}}\lesssim\mathfrak{F}_m(\varphi_0)^{\frac{1}{n+1}}
\end{equation}
Choosing an orthogonal frames, by definition, we get
\[|A|^2=\sum\limits_{i=1}^n\sum\limits_{k=1}^nh_{ik}^2\]
and
\[|H|^2=(\sum\limits_{i=1}^nh_{ii})^2\leqslant n\sum\limits_{i=1}^nh_{ii}^2\leqslant n|A|^2.\]
So, it follows that
\[|H|\leqslant\sqrt{n}|A|\]
and
\[||H||_{n+1}\lesssim||A||_{n+1}\lesssim(\mathfrak{F}_m(\varphi_0))^{\frac{1}{n+1}}.\]

\section{Some interpolation inequalities independent of time}\label{section5}

In the sequel our arguments are based on the following universal interpolation inequalities for covariant tensors on $\varphi_t$. To establish uniform estimates for a section $T$ with respect to time $t$, first one need to establish some inequalities for a function $u$. For this purpose we establish the following Lemma \ref{lemma5.4}, Remark \ref{remark5.5} and Lemma \ref{lemma5.8}.

\begin{lem}\label{lemma5.4} Suppose that $\varphi_t$ and $M_t$ are as defined at the beginning of section \ref{section4}. Let
$\bar{K}_{\pi}$ denotes the sectional curvature of $N$, $\bar{R}$ the injectivity radius of $N$, $\vec{H}_t$ the mean curvature vector field of the immersion $\varphi_t$, $\omega_n$ the volume of the unit ball of $\mathbb{R}^n$ and $b$ is a positive real number or a pure imaginary one. Assume that $\bar{R}>0$, $\bar{K}_{\pi}\leqslant b^2\leqslant1$, and
\begin{equation}\label{5.8}
\left\{
\begin{array}{llll}
\aligned
&\mathfrak{F}_m(\varphi_0)\leqslant\min\left\{\frac{\omega_n}{|b|^n(n+1)},\,\Big(\frac{b\bar{R}}{\pi}\Big)^n
\frac{\omega_n}{n+1}\right\}\s\s \mbox{for}\s b \s \mbox{real},\\
&\mathfrak{F}_m(\varphi_0)\leqslant\min\left\{\frac{\omega_n}{|b|^n(n+1)},\, \frac{\bar{R}^n}{(n+1)2^n}\omega_n\right\}
\s\s \mbox{for}\s b\s \mbox{imaginary}.
\endaligned
\end{array}
\right.
\end{equation}
Then, for $h\in C^1(M)$ with $h\geqslant0$, we have
\begin{equation}\label{5.9}
\Big(\int_Mh^{\frac{n}{n-1}}\,d\mu_t\Big)^{\frac{n-1}{n}}\leqslant C(n)\int_M\Big[|\nabla h|+h|\vec{H}_t|\Big]\,d\mu_t
\end{equation}
where
\begin{equation}\label{5.10}
C(n)=\frac{\pi}{2}\cdot2^n\frac{n+1}{n-1}(n+1)^{\frac{1}{n}}\omega_n^{-\frac{1}{n}}.
\end{equation}
In the case $b$ is an imaginary number we may omit the factor $\frac{\pi}{2}$ in the definition of $C(n)$.
\end{lem}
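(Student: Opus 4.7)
The strategy is to apply the Hoffman--Spruck Sobolev inequality of Theorem \ref{theorem2.1} directly to the isometric immersion $\varphi_t:M\longrightarrow N$, choosing the free parameter $\alpha$ in (\ref{2.29})--(\ref{2.30}) so that the resulting Hoffman--Spruck constant $c(n,\alpha)$ collapses to exactly $C(n)$ defined in (\ref{5.10}). The only nontrivial input is a uniform volume bound for the evolving hypersurface, which is already supplied by the energy monotonicity (\ref{5.2}): since $1\leqslant 1+|\nabla^m\nu|^2$, integrating gives
\[
\operatorname{vol}(M_t)\;=\;\int_M d\mu_t\;\leqslant\;\mathfrak F_m(\varphi_t)\;\leqslant\;\mathfrak F_m(\varphi_0),
\]
so in particular $\operatorname{vol}(\operatorname{supp} h)\leqslant\mathfrak F_m(\varphi_0)$ for any $h\in C^1(M)$.

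I would then take $\alpha=\tfrac{n}{n+1}$, so that $1-\alpha=\tfrac{1}{n+1}$ and $(1-\alpha)^{-1/n}=(n+1)^{1/n}$. A direct substitution into (\ref{2.30}) with $m=n$ gives
\[
c(n,\alpha)\;=\;\tfrac{\pi}{2}\cdot 2^{n}\cdot\tfrac{n+1}{n}\cdot(n+1)^{1/n}\cdot\tfrac{n}{n-1}\cdot\omega_n^{-1/n}\;=\;\tfrac{\pi}{2}\cdot 2^{n}\cdot\tfrac{n+1}{n-1}\cdot(n+1)^{1/n}\cdot\omega_n^{-1/n}\;=\;C(n),
\]
which is exactly the constant claimed in (\ref{5.10}); for $b$ imaginary the factor $\tfrac{\pi}{2}$ is simultaneously dropped on both sides by the last sentence of Theorem \ref{theorem2.1}.

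It remains to verify that the two admissibility hypotheses (\ref{2.27}) and (\ref{2.28}) of Theorem \ref{theorem2.1} are both consequences of (\ref{5.8}) for this choice of $\alpha$. With $(1-\alpha)^{-2/n}=(n+1)^{2/n}$, condition (\ref{2.27}) reduces to $b^{n}(n+1)\omega_n^{-1}\operatorname{vol}(\operatorname{supp} h)\leqslant 1$, i.e.\ $\operatorname{vol}(\operatorname{supp} h)\leqslant \tfrac{\omega_n}{|b|^n(n+1)}$, which matches the first term in the $\min$ of (\ref{5.8}) combined with the volume bound above. For (\ref{2.28}), in the real case I use $\arcsin(b)\leqslant\pi/2$ (valid since $|b|\leqslant 1$) to estimate $2\rho_0\leqslant\tfrac{\pi}{b}\bigl[(n+1)\omega_n^{-1}\operatorname{vol}(\operatorname{supp} h)\bigr]^{1/n}$, so $2\rho_0\leqslant\bar R$ follows from $\operatorname{vol}(\operatorname{supp} h)\leqslant\tfrac{\omega_n}{n+1}\bigl(\tfrac{b\bar R}{\pi}\bigr)^n$, the second term in the real-case $\min$ of (\ref{5.8}). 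In the imaginary case $\rho_0=\bigl[(n+1)\omega_n^{-1}\operatorname{vol}(\operatorname{supp} h)\bigr]^{1/n}$, and $2\rho_0\leqslant\bar R$ follows from $\operatorname{vol}(\operatorname{supp} h)\leqslant\tfrac{\bar R^n}{2^n}\tfrac{\omega_n}{n+1}$, the second term in the imaginary-case $\min$ of (\ref{5.8}).

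Once both hypotheses have been verified, Theorem \ref{theorem2.1} applied to the nonnegative function $h$ with $\vec H=\vec H_t$ yields (\ref{5.9}) with constant $c(n,\alpha)=C(n)$, and the proof is complete. There is no real obstacle here beyond careful bookkeeping; the content of the lemma is essentially the observation that the energy bound (\ref{5.2}) forces the support of any test function to lie inside the Hoffman--Spruck admissible volume regime, and that the distinguished choice $\alpha=n/(n+1)$ is the one that recovers the clean constant $C(n)$ recorded in the statement.
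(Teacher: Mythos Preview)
Your proof is correct and follows exactly the same route as the paper: apply Theorem~\ref{theorem2.1} with $m=n$ and $\alpha=\tfrac{n}{n+1}$, use the energy monotonicity $\operatorname{vol}(M_t)\leqslant\mathfrak F_m(\varphi_0)$ to bound $\operatorname{vol}(\operatorname{supp} h)$, and then check that the two branches of (\ref{5.8}) guarantee (\ref{2.27}) and (\ref{2.28}) respectively. Your write-up is in fact more explicit than the paper's, which merely states the choice of $\alpha$ and asserts the verification of the hypotheses without working out the constant or the imaginary case in detail.
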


\begin{proof} Obviously, we know that $\mbox{vol}(M_t)\leqslant\mathfrak{F}(\varphi_t)\leqslant\mathfrak{F}(\varphi_0)$. In Theorem \ref{theorem2.1}, we take $m=n$, $\tilde{n}=n+1$, $\vec{H}=\vec{H}_t$ and $\alpha=\frac{n}{n+1}$(the optimal choice of $\alpha$ to minimize $c(n,\alpha)$ is $\alpha=\frac{n}{n+1}$).

Clearly,
\begin{eqnarray*}
\mbox{vol}(\mbox{supp}\,h)\leqslant \mbox{vol}(M_t)\leqslant\mathfrak{F}(\varphi_0)
\end{eqnarray*}
and $b^2\leqslant|b|^2$. Noting
\begin{equation}\label{5.7}
\mathfrak{F}_m(\varphi_0)\leqslant\frac{\omega_n}{|b|^n(n+1)},
\end{equation}
by a simple computation one can see that $(\ref{5.7})$ implies the condition $(\ref{2.27})$ holds true in Theorem \ref{theorem2.1}. Since $\arcsin(b)\leqslant\frac{\pi}{2}$ for $b$ real, $(\ref{5.8})$ yields $(\ref{2.28})$. This completes the proof.\end{proof}

\begin{rem}\label{remark5.5}
For any $p\in[1,n)$, replacing $h$ by $h^{\frac{np-p}{n-p}}$ in Lemma \ref{lemma5.4} and using H$\ddot{o}$lder inequality, we obtain
\begin{equation}\label{5.11}
||h||_{L^{\frac{np}{n-p}}(\mu_t)}\leqslant C(n,p)(||\nabla h||_{L^p(\mu_t)}+||h\cdot\vec{H}_t||_{L^p(\mu_t)}).
\end{equation}
For any function $u\in C^1(M)$, by taking $h=\sqrt{u^2+\varepsilon^2}$, we can see
\begin{equation}\label{5.12}
|\nabla h|=\Big|\frac{u\cdot\nabla u}{\sqrt{u^2+\varepsilon^2}}\Big|\leqslant|\nabla u|.
\end{equation}
Substituting $(\ref{5.12})$ into $(\ref{5.11})$, we have
\begin{equation}\label{5.13}
||\sqrt{u^2+\varepsilon^2}||_{L^{\frac{np}{n-p}}(\mu_t)}\leqslant C(n,p)(||\nabla u||_{L^p(\mu_t)}+||\sqrt{u^2+\varepsilon^2}\vec{H}_t||_{L^p(\mu_t)}).
\end{equation}
Letting $\varepsilon$ tend to 0, one can easily get
\begin{equation}\label{5.14}
||u||_{L^{\frac{np}{n-p}}(\mu_t)}\leqslant C(n,p)(||\nabla u||_{L^p(\mu_t)}+||u\cdot\vec{H}_t||_{L^p(\mu_t)}).
\end{equation}
\end{rem}

\begin{rem}
In the next, we are going to give a lower bound of $\mbox{vol}(M_t)$. Indeed, taking $h\equiv1$ in (\ref{5.9}) and applying the same trick below Proposition 5.2 of \cite{M}, we will get
\begin{eqnarray*}
\mathfrak{F}(\varphi_t)^{-n}\lesssim vol(M_t)\leqslant\mathfrak{F}(\varphi_t).
\end{eqnarray*}
On the other hand, from the inequality $\mathfrak{F}(\varphi_t)\leqslant\mathfrak{F}(\varphi_0)$ the lower bound of $\mbox{vol}(M_t)$ follows.
\end{rem}

Now let us return to prove the following universal inequality.
\begin{lem}\label{lemma5.8}
Let $\varphi$ be the solution to $(\ref{4.1})$ and for each $t$ which is in the maximal existence interval of $\varphi$, $\varphi(t)$ is an immersion from $M$ into $N$. Suppose $\dim M=n$ and $\dim N=n+1$. Let $\bar{R}$ be $N$'s injectivity radius and $\bar{K}_{\pi}$ be the sectional curvature of $N$. Assume that $\bar{R}$ is positive, $\bar{K}_{\pi}\leqslant b^2\leqslant1$ where $b$ is a positive real number or a pure imaginary one. Then, for any $p\in(n,\infty)$ there exists a constant $C$ which depends only on $n$, $p$, $\mathfrak{F}_m(\varphi_0)$ and $\bar{R}$ (maybe also depend upon $|b|$) such that, for any $u\in C^1(M)$, there holds true
\begin{equation}\label{5.49}
\max\limits_M|u|\leqslant C(||\nabla u||_p+||u||_p).
\end{equation}
\end{lem}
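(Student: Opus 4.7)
Fix $x_0 \in M$ where $|u|$ attains its maximum, and set $\xi = \varphi(x_0) \in N$. The strategy is to apply Proposition \ref{lemma5.7} to $h = \sqrt{u^2+\varepsilon^2}$ centered at $\xi$, let $\sigma \to 0$, and then send $\varepsilon \to 0$, so as to extract a pointwise bound on $|u(x_0)|$. Because $\varphi$ is a smooth immersion, its restriction to a small neighbourhood of $x_0$ is an embedding, which gives
\[
\liminf_{\sigma\to 0}\frac{\mu_t(B_\sigma(\xi))}{\sigma^n}\,\ge\,\omega_n
\]
(respectively $\ge \omega_n/|b|^n$ with the $(\sin b\sigma)^n$ factor in the real case, since $\sin b\sigma\sim b\sigma$); combined with continuity of $\sqrt{u^2+\varepsilon^2}$, the left-hand side of \eqref{5.38}/\eqref{5.39} converges, as $\sigma\to 0$, to a constant depending only on $n$ and $|b|$ times $|u(x_0)|$.

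\textbf{Bounding the right-hand side.} Fix $\rho>0$ small (for instance $\rho<\min\{\bar R/2,\,\pi/(2|b|)\}$), and use the uniform estimate $\|\vec H_t\|_{L^{n+1}(\mu_t)}\lesssim\mathfrak F_m(\varphi_0)^{1/(n+1)}$ established at the end of Section \ref{section4}. At exponent $n+1>n$, Proposition \ref{lemma5.6} combined with the crude inequality $\mu_t(B_\rho)/\rho^n\le\operatorname{vol}(M_t)/\rho^n\le \mathfrak F_m(\varphi_0)/\rho^n$ yields the uniform density bound
\[
\mu_t(B_\tau(\xi))/\tau^n\,\le\,C_1,\qquad 0<\tau\le\rho,
\]
where $C_1$ depends only on $n$, $\mathfrak F_m(\varphi_0)$, $\bar R$ and $|b|$. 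Using $r\le\tau$ inside $B_\tau(\xi)$ together with H\"older's inequality at exponent $p>n$ in \eqref{5.38}/\eqref{5.39}, one obtains
\[
|u(x_0)|\,\lesssim\,\rho^{-n/p}\|u\|_p \,+\,\rho^{\,1-n/p}\bigl(\|\nabla u\|_p + \|u\vec H_t\|_p\bigr),
\]
the inner $\tau$-integral $\int_0^\rho\tau^{-n/p}d\tau$ being finite precisely because $p>n$.

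\textbf{Handling the $u\vec H_t$ term and absorption.} The only non-standard term is $\|u\vec H_t\|_p$. For $p\in(n,n+1)$, choose $r$ by $1/r=1/p-1/(n+1)$; then H\"older gives $\|u\vec H_t\|_p\le\|u\|_r\|\vec H_t\|_{n+1}$. Since $r>p$, interpolate $\|u\|_r\le\|u\|_\infty^{\theta}\|u\|_p^{1-\theta}$ with $\theta=p/(n+1)\in(0,1)$, and then apply the weighted Young inequality $\|u\|_\infty^{\theta}\|u\|_p^{1-\theta}\le\varepsilon\|u\|_\infty+C_\varepsilon\|u\|_p$. Choosing $\varepsilon$ small enough to absorb the $\|u\|_\infty=|u(x_0)|$ term into the left-hand side yields the claim for $p\in(n,n+1)$. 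For $p\ge n+1$, H\"older on $(M,g_t)$ (whose volume is controlled by $\mathfrak F_m(\varphi_0)$) reduces matters to some $p_0\in(n,n+1)$, and the previous case then applies.

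\textbf{Main obstacle.} The most delicate point is the passage $\sigma\to 0$ in \eqref{5.38}/\eqref{5.39}: one must verify that $\mu_t(B_\sigma(\xi))/\sigma^n$ converges to the correct limiting density even though $\varphi_t$ is only a \emph{local} embedding at $x_0$ (it may fail to be injective globally), and one must carry the approximation $u\mapsto\sqrt{u^2+\varepsilon^2}$ through both sides of the inequality so that $|u|,|\nabla u|,|u\vec H_t|$ appear cleanly in the limit (using $|\nabla\sqrt{u^2+\varepsilon^2}|\le|\nabla u|$ exactly as in Remark \ref{remark5.5}). A secondary—but crucial—book-keeping issue is tracking all constants so that in the end they depend only on $n,p,\mathfrak F_m(\varphi_0),\bar R$ (and $|b|$), which is where the uniform $L^{n+1}$ bound for $\vec H_t$ from Section \ref{section4} becomes essential.
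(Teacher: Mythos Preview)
Your proposal is correct and follows essentially the same route as the paper: Proposition~\ref{lemma5.6} for the uniform density bound, Proposition~\ref{lemma5.7} with $\sigma\to 0$ for the pointwise estimate, and then absorption of the $\|u\vec H_t\|_p$ term via the $L^{n+1}$ control on $\vec H_t$. The paper differs only in cosmetic details---it passes to the limit via the intrinsic-ball inclusion $S_\sigma(\xi)\subseteq B_\sigma(\varphi_t(\xi))$ (citing \cite{CLN}) rather than your local-embedding argument, handles the sign of $u$ by the $u\mapsto u^2$ trick instead of $\sqrt{u^2+\varepsilon^2}$, and defers your H\"older--interpolation--Young absorption to Proposition~6.2 of \cite{M}.
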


\begin{proof} We have known that
\begin{equation}\label{5.50}
\mbox{vol}(M_t)\leqslant\mathfrak{F}_m(\varphi_0)
\end{equation}
and
\begin{equation}\label{5.51}
||H||_{L^{n+1}(M_t)}\lesssim(\mathfrak{F}_m(\varphi_0))^{\frac{1}{n+1}}.
\end{equation}
Firstly, we consider the case that $p\in(n,n+1]$ and $u\geqslant0$.

For any $\xi\in M$, recall that
\[B_{\sigma}(\varphi_t(\xi))=\{x\in M|d^N(\varphi_t(x),\varphi_t(\xi))<\sigma\},\]
and let
\[S_{\sigma}(\xi):=\{x\in M|d^{M_t}(x,\xi)<\sigma\}.\]

When $b$ is real, taking $\rho=\rho_0:=\frac{1}{2}\min\{\bar{R},\frac{\pi}{b}\}$ in Proposition \ref{lemma5.6} we have
\begin{equation}\label{5.52}
\Big[\frac{\mu_t(B_{\sigma}(\varphi_t(\xi)))}{(\sin b\sigma)^n}\Big]^{\frac{1}{p}}\lesssim\Big[\frac{\mu_t(B_{\rho_0}(\varphi_t(\xi)))}{(\sin b\rho_0)^n}\Big]^{\frac{1}{p}}+\int_0^{\rho_0}(\sin b\tau)^{-\frac{n}{p}}\,d\tau.
\end{equation}
Hence
\begin{equation}\label{5.53}
\mu_t(B_{\sigma}(\varphi_t(\xi)))\leqslant C(\bar{R},b,n,p,\mathfrak{F}_m(\varphi_0))\cdot(\sin b\sigma)^n.
\end{equation}
In the next, taking $\rho=\rho_0:=\frac{1}{2}\min\{\bar{R},\frac{\pi}{b}\}$ in Proposition \ref{lemma5.7} we have
\begin{equation}\label{5.54}
\aligned
&\frac{\int_{B_{\sigma}(\varphi_t(\xi))}u\,d\mu_t}{(\sin b\sigma)^n}
\leqslant\frac{\int_Mu\,d\mu_t}{(\sin b\rho_0)^n}+\int_{\sigma}^{\rho_0}d\tau(\sin b\tau)^{-n}\int_{B_{\tau}(\varphi_t(\xi))}(|\nabla u|+u|H|)\,d\mu_t\\
\leqslant&\int_{\sigma}^{\rho_0}d\tau(\sin b\tau)^{-n}(||\nabla u||_{L^p(M_t)}+||uH||_{L^p(M_t)})\cdot\mu_t(B_{\tau}(\varphi_t(\xi)))^{1-\frac{1}{p}}+\frac{\int_Mu\,d\mu_t}{(\sin b\rho_0)^n}\\
\lesssim&\frac{\int_Mu\,d\mu_t}{(\sin b\rho_0)^n}+\int_{\sigma}^{\rho_0}d\tau(\sin b\tau)^{-\frac{n}{p}}(||\nabla u||_{L^p(M_t)}+||uH||_{L^p(M_t)}).
\endaligned
\end{equation}
Since
\begin{equation}\label{5.55}
S_{\sigma}(\xi)\subseteq B_{\sigma}(\varphi_t(\xi)),
\end{equation}
we have
\begin{equation}\label{5.56}
\frac{\int_{B_{\sigma}(\varphi_t(\xi))}u\,d\mu_t}{(\sin b\sigma)^n}\geqslant\frac{\int_{S_{\sigma}(\xi)}u\,d\mu_t}{(\sin b\sigma)^n}.
\end{equation}
As $\sigma\rightarrow0$,
\begin{equation}\label{5.57}
\frac{\int_{S_{\sigma}(\xi)}u\,d\mu_t}{(\sin b\sigma)^n}\longrightarrow u(\xi)\frac{\omega_n}{b^n}.
\end{equation}
The proof of the above can be found in Exercise 1.117 in page 59 of \cite{CLN}. So,
\begin{equation}\label{5.58}
u(\xi)\frac{\omega_n}{b^n}\lesssim\int_Mu\,d\mu_t+||\nabla u||_{L^p(M_t)}+||uH||_{L^p(M_t)}.
\end{equation}

When $b$ is imaginary, taking $\rho=\rho_0:=\frac{1}{2}\bar{R}$ in Proposition \ref{lemma5.6} and Proposition \ref{lemma5.7} we obtain
\begin{equation}\label{5.59}
\aligned
&\frac{\int_{B_{\sigma}(\varphi_t(\xi))}u\,d\mu_t}{\sigma^n}\leqslant\frac{\int_Mu\,d\mu_t}{\rho_0^n}+\int_{\sigma}^{\rho_0}d\tau\cdot\tau^{-n}\int_{B_{\tau}(\varphi_t(\xi))}(|\nabla u|+u|H|)\,d\mu_t\\
\leqslant&\int_{\sigma}^{\rho_0}d\tau\cdot\tau^{-n}(||\nabla u||_{L^p(M_t)}+||uH||_{L^p(M_t)})\cdot\mu_t(B_{\tau}(\varphi_t(\xi)))^{1-\frac{1}{p}}+\frac{\int_Mu\,d\mu_t}{\rho_0^n}\\
\lesssim&\frac{\int_Mu\,d\mu_t}{\rho_0^n}+\int_{\sigma}^{\rho_0}d\tau\cdot\tau^{-\frac{n}{p}}(||\nabla u||_{L^p(M_t)}+||uH||_{L^p(M_t)}).
\endaligned
\end{equation}
Letting $\sigma\rightarrow0$, one can get
\begin{equation}\label{5.60}
u(\xi)\omega_n\lesssim\int_Mu\,d\mu_t+||\nabla u||_{L^p(M_t)}+||uH||_{L^p(M_t)}.
\end{equation}
In conclusion
\begin{equation}\label{5.61}
\max u\lesssim\int_Mu\,d\mu_t+||\nabla u||_{L^p(M_t)}+||uH||_{L^p(M_t)}.
\end{equation}

For arbitrary $u$, replacing $u$ in $(\ref{5.61})$ by $u^2$ we have
\begin{equation}\label{5.62}
\aligned
(\max|u|)^2\lesssim&\int_M|u|^2\,d\mu_t+||\nabla u\cdot u||_{L^p(M_t)}+||u^2H||_{L^p(M_t)}\\
\leqslant&\max|u|\int_M|u|\,d\mu_t+||\nabla u||_{L^p(M_t)}\max|u|+||uH||_{L^p(M_t)}\max|u|.
\endaligned
\end{equation}
So
\begin{equation}\label{5.63}
\max|u|\lesssim\int_Mu\,d\mu_t+||\nabla u||_{L^p(M_t)}+||uH||_{L^p(M_t)}.
\end{equation}
Furthermore, we can take almost the same arguments as in Proposition 6.2 of \cite{M} to get the required result. This completes the proof.
\end{proof}

Now we are going to extend our formulas $(\ref{5.14})$ and $(\ref{5.49})$ to the case of covariant tensor $T$. The method which we use is almost the same as those employed in the proofs of Proposition 6.3, Corollary 6.4 and Proposition 6.5 in \cite{M}. So we only list the results and omit their proofs.

\begin{lem}\label{lemma5.9}
Let $\varphi$ be the solution to $(\ref{4.1})$ and $\varphi(t)$ be an immersion for every $t$. Assume $\bar{R}>0$ where $\bar{R}$ is the $N$'s injectivity radius, the sectional curvature of $N$ satisfies that $\bar{K}_{\pi}\leqslant b^2$ where $b$ is a positive real number belonging to $(0, 1]$ or a pure imaginary one. Suppose the initial immersion map $\varphi_0$ satisfies respectively
\begin{equation}\label{5.65}
\left\{
\begin{array}{llll}
\aligned
&\mathfrak{F}_m(\varphi_0)\leqslant\min\left\{\frac{\omega_n}{|b|^n(n+1)},\, \Big(\frac{b\bar{R}}{\pi}\Big)^n\frac{\omega_n}{n+1}\right\},\,\,\,\,for\,\,\,\,b\,\,\,\,real;\\
&\mathfrak{F}_m(\varphi_0)\leqslant\min\left\{\frac{\omega_n}{|b|^n(n+1)},\, \frac{\bar{R}^n}{(n+1)2^n}\omega_n\right\},\,\,\,\,for\,\,b\,\,\,\,imaginary.
\endaligned
\end{array}
\right.
\end{equation}
Then, there is a constant $C$ which depends only on $n$, $p$ and $\bar{R}$(maybe also depend upon $|b|$) such that for every covariant tensor $T$ there hold
\begin{equation}\label{5.66}
\left\{
\begin{array}{llll}
\aligned
&||T||_{L^{\frac{np}{n-p}}(M_t)}\leqslant C(||\nabla T||_{L^p(M_t)}+||T||_{L^p(M_t)})&\,\,\,\,for\,\,\,\,p\in[1,n);\\
&||T||_{L^{\infty}(M_t)}\leqslant C(||\nabla T||_{L^p(M_t)}+||T||_{L^p(M_t)})&\,\,\,\,for\,\,\,\,p\in(n,\infty).
\endaligned
\end{array}
\right.
\end{equation}
\end{lem}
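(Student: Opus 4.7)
The plan is to reduce the tensor inequality to the corresponding scalar inequalities in Remark \ref{remark5.5} and Lemma \ref{lemma5.8} via Kato's inequality and an $\varepsilon$-regularization. The hypotheses on $\mathfrak{F}_m(\varphi_0)$ in (\ref{5.65}) are identical to those in Lemma \ref{lemma5.4}, so Remark \ref{remark5.5} and Lemma \ref{lemma5.8} are directly applicable to any nonnegative $u \in C^1(M)$ on $M_t$. The idea is to apply them to $u_\varepsilon := \sqrt{|T|^2 + \varepsilon^2}$, which is a nonnegative $C^1$ function satisfying the Kato bound
\[
|\nabla u_\varepsilon| = \frac{|\langle T | \nabla T\rangle_g|}{\sqrt{|T|^2+\varepsilon^2}} \leqslant |\nabla T|
\]
pointwise on $M$.

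For the $L^\infty$-case ($p \in (n,\infty)$), Lemma \ref{lemma5.8} applied to $u_\varepsilon$ gives $\max_M u_\varepsilon \leqslant C(\|\nabla u_\varepsilon\|_p + \|u_\varepsilon\|_p) \leqslant C(\|\nabla T\|_p + \|u_\varepsilon\|_p)$. Since $|T| \leqslant u_\varepsilon \leqslant |T|+\varepsilon$, letting $\varepsilon \to 0$ together with dominated convergence yields the claimed $L^\infty$ bound. For the Sobolev case ($p \in [1,n)$), Remark \ref{remark5.5} applied to $u_\varepsilon$ gives, after the same passage to the limit,
\[
\|T\|_{np/(n-p)} \leqslant C\left(\|\nabla T\|_p + \big\||T|\cdot |H|\big\|_p\right),
\]
where I have used $|\vec{H}_t| = |H|$ (codimension one).

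The remaining task is to absorb the $\||T||H|\|_p$ term into the two terms on the right of (\ref{5.66}). Here I would invoke H\"older's inequality with the splitting $1/p = 1/(n+1) + 1/s$, i.e.\ $s = p(n+1)/(n+1-p)$, to obtain $\||T||H|\|_p \leqslant \|H\|_{n+1}\,\|T\|_s$. The crucial input is the uniform estimate $\|H\|_{n+1} \lesssim \mathfrak{F}_m(\varphi_0)^{1/(n+1)}$ already established in (\ref{5.6}). A direct computation shows $s < np/(n-p) =: q$, so one can interpolate $\|T\|_s \leqslant \|T\|_q^{1-\theta}\|T\|_p^\theta$ for some $\theta \in (0,1)$, and then apply Young's inequality to absorb a small multiple of $\|T\|_q$ into the left-hand side. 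Combining these steps yields $\|T\|_q \leqslant C(\|\nabla T\|_p + \|T\|_p)$ with $C$ depending only on $n$, $p$, $\bar R$, $|b|$, and $\mathfrak{F}_m(\varphi_0)$.

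The main obstacle is conceptually the last step: one needs the $\mathfrak{F}_m$-controlled $L^{n+1}$ bound on $H$ to convert the curvature-weighted term $\||T|H\|_p$ appearing naturally in the Michael--Simon-type inequality into a purely $L^p$ norm of $T$. This is precisely where the smallness assumption (\ref{5.65}) on the initial energy enters, and it explains why the statement of the lemma is constrained by exactly the same hypotheses as Lemma \ref{lemma5.4}. All the remaining manipulations follow the template of Proposition 6.3 and Corollary 6.4 in \cite{M}, adapted to the present Riemannian ambient via the replacement of the Michael--Simon inequality by Lemma \ref{lemma5.4} and Lemma \ref{lemma5.8}.
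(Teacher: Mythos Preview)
Your proposal is correct and follows essentially the same approach the paper has in mind: the paper omits the proof entirely, stating that ``the method which we use is almost the same as those employed in the proofs of Proposition 6.3, Corollary 6.4 and Proposition 6.5 in \cite{M}'', and your argument is precisely the expected adaptation of that template---$\varepsilon$-regularization of $|T|$, Kato's inequality, application of the scalar inequalities (\ref{5.14}) and (\ref{5.49}), and absorption of the $\||T|H\|_p$ term via the uniform bound (\ref{5.6}) on $\|H\|_{n+1}$ combined with H\"older, interpolation, and Young's inequality. One small remark: your constant depends on $\mathfrak{F}_m(\varphi_0)$, but under hypothesis (\ref{5.65}) this is itself bounded in terms of $n$, $|b|$, and $\bar{R}$, so this is consistent with the stated dependence.
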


\begin{lem}\label{lemma5.10}
Under the same hypotheses as in Lemma \ref{lemma5.9}, there is a constant $C$ which only depends on $n$, $p$, $\bar{R}$, $j$, $s$, $q$, $p$ (may be also depend on $|b|$) such that for any covariant tensor $T$, there hold true
\begin{equation}\label{5.67}
\left\{
\begin{array}{llll}
\aligned
&||\nabla^jT||_{L^p(M_t)}\leqslant C||T||_{W^{s,q}(M_t)}\,\,\,\,\quad \mbox{with}\,\,\,\,\frac{1}{p}=\frac{1}{q}-\frac{s-j}{n}>0;\\
&||\nabla^jT||_{L^{\infty}(M_t)}\leqslant C||T||_{W^{s,q}(M_t)}\,\,\,\,\quad \mbox{with}\,\,\,\,\frac{1}{q}-\frac{s-j}{n}<0.
\endaligned
\end{array}
\right.
\end{equation}
\end{lem}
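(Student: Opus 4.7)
The plan is to reduce the general Sobolev embedding of Lemma \ref{lemma5.10} to iterated applications of Lemma \ref{lemma5.9}, treating $\nabla^j T$ at each stage as a new covariant tensor to which the single-step embedding applies. I would handle the two inequalities separately.

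For the first inequality with $\frac{1}{p}=\frac{1}{q}-\frac{s-j}{n}>0$, I would induct on the integer $k=s-j\geqslant 0$. The base case $k=0$ is immediate: then $p=q$ and $||\nabla^s T||_{L^q(M_t)}\leqslant||T||_{W^{s,q}(M_t)}$ by definition. For the inductive step, set $p_1$ by $\frac{1}{p_1}=\frac{1}{p}+\frac{1}{n}=\frac{1}{q}-\frac{s-j-1}{n}$. Since $\frac{1}{p}>0$ and $\frac{1}{q}\leqslant 1$ one verifies $p_1\in[1,n)$, so the first inequality of Lemma \ref{lemma5.9} applied to the covariant tensor $\nabla^j T$ yields
\[
||\nabla^j T||_{L^p(M_t)}\leqslant C\bigl(||\nabla^{j+1}T||_{L^{p_1}(M_t)}+||\nabla^j T||_{L^{p_1}(M_t)}\bigr).
\]
Both terms on the right fit the inductive hypothesis with $k$ replaced by $k-1$: the first with $(j',s')=(j+1,s)$ and the second with $(j',s')=(j,s-1)$, both producing exactly the exponent $p_1$. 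Thus each is bounded by $C||T||_{W^{s,q}(M_t)}$, which closes the induction.

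For the second inequality with $\frac{1}{q}-\frac{s-j}{n}<0$, the plan is to reduce matters to the situation where a single Morrey step suffices. First, using the monotonicity $||T||_{W^{s',q}(M_t)}\leqslant||T||_{W^{s,q}(M_t)}$ for $s'\leqslant s$, I replace $s$ by the smallest $s'$ satisfying $\frac{1}{q}-\frac{s'-j}{n}<0$; generically this forces $\frac{1}{q}-\frac{s'-j-1}{n}>0$. In this reduced setting define $p'$ by $\frac{1}{p'}=\frac{1}{q}-\frac{s'-j-1}{n}$, so that $n<p'<\infty$. The first inequality already established bounds both $||\nabla^{j+1}T||_{L^{p'}(M_t)}$ and $||\nabla^j T||_{L^{p'}(M_t)}$ by $C||T||_{W^{s,q}(M_t)}$, and the Morrey-type second inequality of Lemma \ref{lemma5.9} applied to $\nabla^j T$ at exponent $p'>n$ then gives
\[
||\nabla^j T||_{L^{\infty}(M_t)}\leqslant C\bigl(||\nabla^{j+1}T||_{L^{p'}(M_t)}+||\nabla^j T||_{L^{p'}(M_t)}\bigr)\leqslant C||T||_{W^{s,q}(M_t)}.
\]

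The step I expect to be most delicate is the critical borderline in which $n/q$ is an integer; in that case the reduction above lands on $p'=n$ for which neither case of Lemma \ref{lemma5.9} applies. I would handle it by perturbing $q$ to a slightly smaller $\tilde q$ with $\tilde q>n/(s-j)$ and $n/\tilde q\notin\mathbb{N}$, invoking the non-borderline result at $\tilde q$, and then absorbing the transition via H\"older's inequality together with the two-sided bound on $\mbox{vol}(M_t)$ provided by Lemma \ref{lemma5.4} and the energy monotonicity (\ref{5.2}). Because the Sobolev constant in Lemma \ref{lemma5.9} and the volume bounds are themselves independent of $t$, the resulting constant $C$ in Lemma \ref{lemma5.10} is also independent of $t$, as required.
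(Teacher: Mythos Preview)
Your proposal is correct and proceeds by the standard iteration of the single-step embedding (Lemma~\ref{lemma5.9}), which is exactly the route the paper takes: it omits the proof entirely and refers to Corollary~6.4 of \cite{M}, where precisely this bootstrap argument is carried out. Your handling of the borderline case via a H\"older perturbation together with the uniform volume bound is also the standard device, so nothing is missing.
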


\begin{lem}\label{lemma5.11}
Under the same hypotheses as in Lemma \ref{lemma5.9}, there is a constant $C$ which only depends on $n,p,\bar{R},j,s,q,p,r$(may be also depend on $|b|$) such that for any covariant tensor and for all $0\leqslant j\leqslant s$, $p,q,r\in[1,\infty)$ and $a\in[j/s,1]$, there holds
\begin{equation}\label{5.68}
||\nabla^jT||_{L^p(M_t)}\leqslant C||T||^a_{W^{s,q}(M_t)}||T||^{1-a}_{L^r(M_t)}
\end{equation}
with
\begin{equation}\label{5.69}
\frac{1}{p}=\frac{j}{n}+a(\frac{1}{q}-\frac{s}{n})+\frac{1-a}{r}.
\end{equation}
If $$\frac{j}{n}+a(\frac{1}{q}-\frac{s}{n})+\frac{1-a}{r}<0,$$ then $(\ref{5.68})$ holds for all $p\in[1,\infty)$.
\end{lem}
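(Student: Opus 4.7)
The strategy parallels Proposition 6.5 of \cite{M}, with Lemma \ref{lemma5.9} and Lemma \ref{lemma5.10} replacing the Euclidean Michael--Simon--Sobolev input used there. The proof proceeds in two steps: first establish the extremal case $a=j/s$ (the classical Gagliardo--Nirenberg inequality on the submanifold $M_t$), and then obtain every intermediate exponent $a\in[j/s,1]$ by a Hölder interpolation between the case $a=j/s$ and the Sobolev endpoint $a=1$, which is already contained in Lemma \ref{lemma5.10}.

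\textbf{Step 1 (the extremal case $a=j/s$).} I would argue by induction on $s$ with $j$ and $r$ fixed. The base case $s=1$, $j=0$ is trivial (both sides agree). For the inductive step, apply Lemma \ref{lemma5.9} to the nonnegative function $|\nabla^{j} T|^{\alpha}$ with $\alpha$ chosen so that the resulting scaling matches (\ref{5.69}); this bounds $\|\nabla^{j} T\|_{L^{p}(M_t)}$ by a combination of $\|\nabla^{j+1}T\|_{L^{p_1}(M_t)}$, $\|\nabla^{j}T\|_{L^{p_2}(M_t)}$ and $\||H|\cdot|\nabla^{j}T|\|_{L^{p_3}(M_t)}$ with appropriate exponents. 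Equivalently, integration by parts gives the Bochner-type bound
\begin{equation*}
\int_M |\nabla^{j}T|^{p}\,d\mu_t \;\leqslant\; C\int_M |\nabla^{j-1}T|^{q_1}\,d\mu_t \cdot \int_M |\nabla^{j+1}T|^{q_2}\,d\mu_t \;+\; (\text{lower order}).
\end{equation*}
Iterating this inequality peels off derivatives on one side until $|T|$ is reached in $L^r$ and on the other side until $|\nabla^{s} T|$ is reached in $L^q$; the final exponents are forced by the scaling relation (\ref{5.69}) with $a=j/s$. The mean curvature terms produced by Lemma \ref{lemma5.9} are controlled using $\|H\|_{L^{n+1}(\mu_t)}\lesssim \mathfrak{F}_m(\varphi_0)^{1/(n+1)}$ from Section \ref{section4}, so the constants remain universal.

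\textbf{Step 2 (interpolation for $a\in[j/s,1]$).} Set $a_0=j/s$ and $a_1=1$, and let $p_0,p_1\in[1,\infty]$ be the Lebesgue exponents associated with $a_0,a_1$ by (\ref{5.69}). For arbitrary $a\in[a_0,a_1]$ write $a=(1-\theta)a_0+\theta a_1$ with $\theta\in[0,1]$. A direct computation from (\ref{5.69}) shows
\begin{equation*}
\frac{1}{p}=\frac{1-\theta}{p_0}+\frac{\theta}{p_1},
\end{equation*}
so Hölder's inequality yields
\begin{equation*}
\|\nabla^{j}T\|_{L^{p}(M_t)} \;\leqslant\; \|\nabla^{j}T\|_{L^{p_0}(M_t)}^{1-\theta}\,\|\nabla^{j}T\|_{L^{p_1}(M_t)}^{\theta}.
\end{equation*}
Applying Step 1 to the first factor and Lemma \ref{lemma5.10} to the second (recall that $a=1$ in (\ref{5.69}) reduces to $1/p_1=1/q-(s-j)/n$, which is precisely the Sobolev range covered there) gives (\ref{5.68}) after collecting powers. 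The addendum for $\tfrac{j}{n}+a(\tfrac{1}{q}-\tfrac{s}{n})+\tfrac{1-a}{r}<0$ follows by choosing $p_1=\infty$ in the interpolation and invoking the $L^{\infty}$-version of Lemma \ref{lemma5.10}.

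\textbf{Main obstacle.} The genuine difficulty lies in Step 1: because the ambient manifold $N$ is curved, every integration by parts or commutation of covariant derivatives produces curvature remainders via the Ricci identity (\ref{2.15}) and relation (\ref{2.19}). One must check that these remainders can be reabsorbed into the same Sobolev norms on the right hand side without degrading the exponents in (\ref{5.69}), and that the universal constant produced depends only on $n,p,q,r,j,s,\bar{R},|b|$ and the fixed quantity $\mathfrak{F}_m(\varphi_0)$, not on any further time-dependent data of $M_t$. Tracking this dependence through the iteration is the place where the bounded-geometry hypothesis on $N$ and the a priori control $\mathfrak{F}_m(\varphi_t)\leqslant \mathfrak{F}_m(\varphi_0)$ are used in an essential way.
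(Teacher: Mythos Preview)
Your overall two-step architecture---establish the endpoint $a=j/s$, then H\"older-interpolate between that and the Sobolev case $a=1$ from Lemma~\ref{lemma5.10}---is the standard route and is exactly what the paper intends: it omits the proof entirely and refers to Proposition~6.5 of \cite{M}, whose argument is precisely this. Your verification in Step~2 that the exponents interpolate linearly is correct.

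However, your ``Main obstacle'' is a phantom. No curvature remainders arise in Step~1. The integration-by-parts underlying Hamilton's interpolation (Propositions~\ref{lemma5.1} and~\ref{lemma5.2}) is simply
\[
\int_{M}\langle\nabla^{j}T,\nabla^{j}T\rangle|\nabla^{j}T|^{p-2}\,d\mu_t
= -\int_{M}\langle\nabla^{j-1}T,\,\nabla(\nabla^{j}T|\nabla^{j}T|^{p-2})\rangle\,d\mu_t,
\]
and $\nabla(\nabla^{j}T)=\nabla^{j+1}T$ by definition of the iterated covariant derivative; no commutation, hence no Ricci identity~(\ref{2.15}) or relation~(\ref{2.19}), is ever invoked. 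This is why Propositions~\ref{lemma5.1} and~\ref{lemma5.2} carry constants depending only on $n$ and $s$, with no geometric input whatsoever. The only place geometry enters the proof of Lemma~\ref{lemma5.11} is through Lemma~\ref{lemma5.9} (the Hoffman--Spruck Sobolev inequality), and there the dependence on $N$ has already been packaged into the constant via the hypotheses on $\bar{R}$, $b$, and $\mathfrak{F}_m(\varphi_0)$. So the bounded-geometry assumption on $N$ plays no role in this particular lemma beyond what Lemma~\ref{lemma5.9} already encodes; you should delete the obstacle paragraph rather than try to resolve it.
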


\section{The uniform bound for the covariant derivatives of the second fundamental form}
In this section we will provide the proof of Theorem 1.1. For this goal we need to take a contradiction argument. If the conclusions of Theorem 1.1 are false, then the evolving hypersurface will blow up at some time. Suppose that at a certain time $T>0$ the evolving hypersurface develops a singularity. Then, for the family $\{M_t\}_{t\in[0,T)}$, we are going to use the time-independent inequality $(\ref{5.68})$ to show the following uniform estimates
\[||\nabla^pA||_{L^{\infty}(M_t)}\leqslant C_p<\infty\]
for any $t\in[0,T)$ and all $p\in \mathbb{N}$. For this purpose, Our strategy is to compute $$\frac{d}{dt}||\nabla^pA||^2_{L^2(M_t)}$$
and derive an ordinary differential inequality with respect to $||\nabla^pA||^2_{L^2(M_t)}$ by the interpolation inequalities discussed in the above sections. Then, by the Gronwall inequality, one are able to obtain an upper bound of $||\nabla^pA||^2_{L^2(M_t)}$.\medskip

First we derive the evolution equations for $g$, $g^{-1}$, $\nu$, $\Gamma^i_{jk}$ and $A$. Recalling the definition of $X$ and substituting $(\ref{4.1})$ into $(\ref{3.7}),(\ref{3.10}),(\ref{3.11})$ and $(\ref{3.12})$ respectively, we get
\begin{equation}\label{6.1}
\frac{\partial g_{ij}}{\partial t}=-2E_m(\varphi)h_{ij},
\end{equation}
\begin{equation}\label{6.2}
\frac{\partial g^{ij}}{\partial t}=2g^{is}h_{sl}g^{lj}E_m(\varphi),
\end{equation}
\begin{equation}\label{6.3}
\nabla_t\nu=\mbox{grad}^ME_m(\varphi)
\end{equation}
and
\begin{equation}\label{6.4}
\frac{\partial\Gamma^i_{jk}}{\partial t}=-3(\nabla E_m\ast A+E_m\ast\nabla A).
\end{equation}

For the convenience of later calculation, we need to analyzing the specific expression of $E_m(\varphi)$ as follows
\begin{equation}\label{6.5}
\aligned
E_m(\varphi)=&\,\mathfrak{q}^{2m+1}(A,\nabla\nu)+\mathfrak{q}^1(A)+R^{2m}_1(\nabla\varphi,\nu)+2(-1)^m\Delta^mH\\
&+R_2^{2m+2}(\nabla\varphi,\nu)+\sum\limits_{i+j+k=2m-2}C_{ijk}\langle\nabla^{i+1}\varphi|\nabla^{j+1}\nu\rangle\ast\nabla^k\mathfrak{R}\\
&+\sum\limits_{i+j=2m-2}C_{ij}\langle\nabla^{i+1}\nu|\nabla\varphi\rangle\ast\nabla^j(\mathfrak{R}+A\otimes A).
\endaligned
\end{equation}
As in \cite{M} we hope that the last two terms on the right-hand side of $(\ref{6.5})$ do not contain $\nabla^{i+1}\varphi(i\in\mathbb{N})$.

Firstly, we consider $\langle\nabla^{i+1}\nu|\nabla\varphi\rangle$. Our strategy is to carry derivatives from $\nu$ to $\varphi$.
Since
\begin{equation}\label{6.6}
\langle\nu|\nabla\varphi\rangle=0,
\end{equation}
differentiating the two sides of $(\ref{6.6})$ $i+1$ times, we get
\begin{equation}\label{6.7}
\nabla^{i+1}\langle\nu|\nabla\varphi\rangle=0.
\end{equation}
So, recalling $(\ref{4.3})$, we have
\begin{equation}\label{6.8}
\aligned
\langle\nabla^{i+1}\nu|\nabla\varphi\rangle
=&-\sum\limits_{p=1}^{i+1}\binom{i+1}{p}\langle\nabla^{p+1}\varphi|\nabla^{i+1-p}\nu\rangle\\
=&\sum\limits_{p=1}^{i+1}\binom{i+1}{p}\langle\nabla^{p-1}(A\otimes\nu)|\nabla^{i+1-p}\nu\rangle\\
=&\sum\limits_{p=1}^{i+1}\sum\limits_{a=0}^{p-1}\binom{i+1}{p}\binom{p-1}{a}\nabla^{p-1-a}A\otimes\langle\nabla^a\nu|\nabla^{i+1-p}\nu\rangle.
\endaligned
\end{equation}
For the case $i=0$,
\begin{equation}\label{6.9}
\langle\nabla\nu|\nabla\varphi\rangle=A=\mathfrak{q}^1(A);
\end{equation}
For $i=1$,
\begin{equation}\label{6.10}
\langle\nabla^2\nu|\nabla\varphi\rangle=\nabla A=\mathfrak{q}^2(A);
\end{equation}
For $i\geqslant2$,
\begin{equation}\label{6.11}
\aligned
\langle\nabla^{i+1}\nu|\nabla\varphi\rangle
=&\sum\limits_{p=2}^i\sum\limits_{a=0}^{p-1}\binom{i+1}{p}\binom{p-1}{a}\nabla^{p-1-a}A\otimes\langle\nabla^a\nu|\nabla^{i+1-p}\nu\rangle\\
&+(i+1)A\otimes\langle\nu|\nabla^i\nu\rangle+\sum\limits_{a=0}^i\binom{i}{a}\nabla^{i-a}A\otimes\langle\nabla^a\nu|\nu\rangle\\
=&\sum\limits_{p=2}^i\sum\limits_{a=1}^{p-1}\binom{i+1}{p}\binom{p-1}{a}\nabla^{p-1-a}A\otimes\langle\nabla^a\nu|\nabla^{i+1-p}\nu\rangle\\
&+\sum\limits_{p=2}^i\binom{i+1}{p}\nabla^{p-1}A\otimes\langle\nu|\nabla^{i+1-p}\nu\rangle\\
&+(i+1)A\otimes\langle\nu|\nabla^i\nu\rangle+\sum\limits_{a=0}^i\binom{i}{a}\nabla^{i-a}A\otimes\langle\nabla^a\nu|\nu\rangle.
\endaligned
\end{equation}
For $i=2$, recalling $(\ref{4.2})$ we have
\begin{equation}\label{6.12}
\aligned
\langle\nabla^3\nu|\nabla\varphi\rangle
=&3A\otimes\langle\nabla\nu|\nabla\nu\rangle+3A\otimes\langle\nu|\nabla^2\nu\rangle+\sum\limits_{a=0}^2\binom{2}{a}\nabla^{2-a}A\otimes\langle\nabla^a\nu|\nu\rangle\\
=&\nabla^2A-A\otimes\langle\nabla\nu|\nabla\nu\rangle=\nabla^2A-A\ast A\ast A=\mathfrak{q}^3(A).
\endaligned
\end{equation}
For $i\geqslant3$,
\begin{equation*}\label{6.13}
\aligned
\langle\nabla^{i+1}\nu|\nabla\varphi\rangle
=&\sum\limits_{p=2}^i\sum\limits_{a=1}^{p-1}\binom{i+1}{p}\binom{p-1}{a}\nabla^{p-1-a}A\otimes\langle\nabla^a\nu|\nabla^{i+1-p}\nu\rangle\\
&+(i+1)A\otimes\langle\nu|\nabla^i\nu\rangle+\sum\limits_{a=2}^i\binom{i}{a}\nabla^{i-a}A\otimes\langle\nabla^a\nu|\nu\rangle\\
&+\nabla^iA+\sum\limits_{p=2}^{i-1}\binom{i+1}{p}\nabla^{p-1}A\otimes\langle\nu|\nabla^{i+1-p}\nu\rangle.
\endaligned
\end{equation*}
Furthermore, we compute
\begin{equation}\label{6.13}
\aligned
\langle\nabla^{i+1}\nu|\nabla\varphi\rangle
=&\sum\limits_{p=2}^i\sum\limits_{a=1}^{p-1}\binom{i+1}{p}\binom{p-1}{a}\nabla^{p-1-a}A\otimes\langle\nabla^a\nu|\nabla^{i+1-p}\nu\rangle\\
&-\frac{1}{2}\sum\limits_{p=2}^{i-1}\sum\limits_{a=1}^{i-p}\binom{i+1}{p}\nabla^{p-1}A\otimes\langle\nabla^a\nu|\nabla^{i+1-p-a}\nu\rangle\\
&-\frac{i+1}{2}\sum\limits_{p=1}^{i-1}\binom{i}{p}A\otimes\langle\nabla^p\nu|\nabla^{i-p}\nu\rangle+\nabla^iA\\
&-\frac{1}{2}\sum\limits_{a=2}^i\sum\limits_{b=1}^{a-1}\binom{i}{a}\binom{a}{b}\nabla^{i-a}A\otimes\langle\nabla^{a-b}\nu|\nabla^b\nu\rangle\\
=&\mathfrak{q}^{i+1}(A,\nabla\nu).
\endaligned
\end{equation}
Secondly, for $j\geqslant1$, we consider $\langle\nabla^{i+1}\nu|\nabla^{j+1}\varphi\rangle$ which equals to
\begin{equation}\label{6.14}
-\langle\nabla^{i+1}\nu|\nabla^{j-1}(A\otimes\nu)\rangle=-\sum\limits_{p=0}^{j-1}\binom{j-1}{p}\nabla^{j-1-p}A\otimes\langle\nabla^{i+1}\nu|\nabla^p\nu\rangle.
\end{equation}
While $j=1$, $i=0$,
\begin{equation}\label{6.15}
\langle\nabla\nu|\nabla^2\varphi\rangle=0.
\end{equation}
While $j=1$, $i\geqslant1$,
\begin{equation}\label{6.16}
\aligned
\langle\nabla^{i+1}\nu|\nabla^2\varphi\rangle
=&-\langle\nabla^{i+1}\nu|A\otimes\nu\rangle
=-A\otimes\langle\nabla^{i+1}\nu|\nu\rangle\\
=&\frac{1}{2}\sum\limits_{p=1}^i\binom{i+1}{p}A\otimes\langle\nabla^p\nu|\nabla^{i+1-p}\nu\rangle\\
=&\mathfrak{q}^{i+2}(A,\nabla\nu).
\endaligned
\end{equation}
While $j\geqslant2$, $i=0$,
\begin{equation}\label{6.17}
\aligned
\langle\nabla\nu|\nabla^{j+1}\varphi\rangle
=&-\sum\limits_{p=0}^{j-1}\binom{j-1}{p}\nabla^{j-1-p}A\otimes\langle\nabla\nu|\nabla^p\nu\rangle\\
=&-\sum\limits_{p=1}^{j-1}\binom{j-1}{p}\nabla^{j-1-p}A\otimes\langle\nabla\nu|\nabla^p\nu\rangle\\
=&\mathfrak{q}^{j+1}(A,\nabla\nu).
\endaligned
\end{equation}
While $j\geqslant2$, $i\geqslant1$,
\begin{equation}\label{6.18}
\aligned
\langle\nabla^{i+1}\nu|\nabla^{j+1}\varphi\rangle
=&-\sum\limits_{p=1}^{j-1}\binom{j-1}{p}\nabla^{j-1-p}A\otimes\langle\nabla^{i+1}\nu|\nabla^p\nu\rangle\\
&-\nabla^{j-1}A\otimes\langle\nabla^{i+1}\nu|\nu\rangle\\
=&-\sum\limits_{p=1}^{j-1}\binom{j-1}{p}\nabla^{j-1-p}A\otimes\langle\nabla^{i+1}\nu|\nabla^p\nu\rangle\\
&+\frac{1}{2}\sum\limits_{p=1}^i\binom{i+1}{p}\nabla^{j-1}A\otimes\langle\nabla^p\nu|\nabla^{i+1-p}\nu\rangle\\
=&\mathfrak{q}^{i+j+1}(A,\nabla\nu).
\endaligned
\end{equation}
In conclusion, we have
\begin{equation}\label{6.19}
\langle\nabla^{i+1}\nu|\nabla^{j+1}\varphi\rangle=
\left\{
\begin{array}{llll}
\aligned
&\mathfrak{q}^{i+1}(A),\s & j=0,\s 0\leqslant i\leqslant2\\
&0, &j=1,\s i=0\\
&\mathfrak{q}^{i+j+1}(A,\nabla\nu), &\mbox{otherwise.}
\endaligned
\end{array}
\right.
\end{equation}
So, we have
\begin{equation}\label{6.20}
\aligned
E_m(\varphi)=&\mathfrak{q}^{2m+1}(A,\nabla\nu)+\mathfrak{q}^1(A)+R^{2m}_1(\nabla\varphi,\nu)+2(-1)^m\Delta^mH\\
&+R_2^{2m+2}(\nabla\varphi,\nu)+\mathfrak{q}^{2m+1}(A,\mathfrak{R})+\mathfrak{q}^{2m+1}(A,\nabla\nu,\mathfrak{R})\\
&+\mathfrak{q}^{2m+1}(A,\mathfrak{R}+A\otimes A)+\mathfrak{q}^{2m+1}(A,\nabla\nu,\mathfrak{R}+A\otimes A).
\endaligned
\end{equation}

Let us return to compute the evolution equation of $A$. By the same way as in \cite{M}, we know
\begin{equation}\label{6.21}
\aligned
\frac{\partial h_{ij}}{\partial t}=&E_m(\varphi)\langle R^N(\nu,\nabla_i\varphi)\nabla_j\varphi|\nu\rangle+\nabla_{ij}E_m(\varphi)-E_m(\varphi)h_{is}g^{sl}h_{lj}\\
=&E_m(\varphi)(R^2_4(\nabla\varphi,\nu)-A\ast A)+2(-1)^m\nabla_{ij}(\Delta^mH)\\
 &+\mathfrak{q}^{2m+3}(A,\nabla\nu)+\mathfrak{q}^3(A)+R_1^{2m+2}(\nabla\varphi,\nu)+R_2^{2m+4}(\nabla\varphi,\nu)\\
 &+\mathfrak{q}^{2m+3}(A,\mathfrak{R})+\mathfrak{q}^{2m+3}(A,\nabla\nu,\mathfrak{R})+\mathfrak{q}^{2m+3}(A,\mathfrak{R}+A\otimes A)\\
 &+\mathfrak{q}^{2m+3}(A,\nabla\nu,\mathfrak{R}+A\otimes A).
\endaligned
\end{equation}
Recalling $(\ref{2.11})$, we have
\begin{equation}\label{6.22}
\nabla_{ij}H=\Delta h_{ij}+\mathfrak{q}^3(A,A,A)+\mathfrak{q}^3(A,\mathfrak{R})+R^4_3(\varphi,\nu).
\end{equation}
Since
\begin{equation}\label{6.23}
\nabla_{ij}(\Delta^mH)=\Delta^m\nabla_{ij}H+\mathfrak{q}^{2m+3}(A,A),
\end{equation}
we conclude
\begin{equation}\label{6.24}
\aligned
\frac{\partial h_{ij}}{\partial t}=&\,E_m(\varphi)(R^2_4(\nabla\varphi,\nu)-A\ast A)+2(-1)^m\Delta^{m+1}h_{ij}\\
&+\mathfrak{q}^{2m+3}(A,A,A)+\mathfrak{q}^{2m+3}(A,\mathfrak{R})\\
&+R^{2m+4}_3(\nabla\varphi,\nu)+\mathfrak{q}^{2m+3}(A,A)\\
&+\mathfrak{q}^{2m+3}(A,\nabla\nu)+\mathfrak{q}^3(A)+R_1^{2m+2}(\nabla\varphi,\nu)\\
&+R_2^{2m+4}(\nabla\varphi,\nu)+\mathfrak{q}^{2m+3}(A,\nabla\nu,\mathfrak{R})\\
&+\mathfrak{q}^{2m+3}(A,\mathfrak{R}+A\otimes A)+\mathfrak{q}^{2m+3}(A,\nabla\nu,\mathfrak{R}+A\otimes A).
\endaligned
\end{equation}
\begin{lem}\label{lemma6.1}
The covariant derivatives of $A$ satisfy following evolution equation
\begin{eqnarray}\label{6.25}
\begin{array}{ll}
\frac{\partial}{\partial t}\nabla^kh_{ij}=&2(-1)^m\Delta^{m+1}\nabla^kh_{ij}+\mathfrak{q}^{2m+k+3}(A,A)+\mathfrak{q}^{2m+k+3}(A,A,A)\\
&+\mathfrak{q}^{2m+k+3}(A,\mathfrak{R})+R^{2m+k+4}_3(\nabla\varphi,\nu)\\
&+\mathfrak{q}^{2m+k+3}(A,\nabla\nu)+\mathfrak{q}^{k+3}(A)\\
&+R_1^{2m+k+2}(\nabla\varphi,\nu)+R_2^{2m+k+4}(\nabla\varphi,\nu)+\mathfrak{q}^{2m+k+3}(A,\nabla\nu,\mathfrak{R})\\
&+\mathfrak{q}^{2m+k+3}(A,\mathfrak{R}+A\otimes A)\\
&+\mathfrak{q}^{2m+k+3}(A,\nabla\nu,\mathfrak{R}+A\otimes A)+\mathfrak{q}^{2m+k+3}(A,A,A,\nabla\nu)\\
&+\mathfrak{q}^{k+3}(A,A,A)+\sum\limits_{l=0}^k\mathfrak{q}^{l+2}(A,A)\ast R_1^{2m+k-l}(\nabla\varphi,\nu)\\
&+\sum\limits_{l=0}^k\mathfrak{q}^{l+2}(A,A)\ast R_2^{2m+k+2-l}(\nabla\varphi,\nu)\\
&+\mathfrak{q}^{2m+k+3}(A,A,A,\mathfrak{R})+\mathfrak{q}^{2m+k+3}(A,A,A,\nabla\nu,\mathfrak{R})\\
&+\mathfrak{q}^{2m+k+3}(A,A,\mathfrak{R}+A\otimes A)+\mathfrak{q}^{2m+k+3}(A,A,A,\nabla\nu,\mathfrak{R}+A\otimes A)\\
&+\sum\limits_{a=0}^kR_4^{a+2}(\nabla\varphi,\nu)\ast[\mathfrak{q}^{2m+k-a+1}(A)+\mathfrak{q}^{2m+k-a+1}(A,\nabla\nu)\\
&+\mathfrak{q}^{k-a+1}(A)+R_1^{2m+k-a}(\nabla\varphi,\nu)+R_2^{2m+k-a+2}(\nabla\varphi,\nu)\\
&+\mathfrak{q}^{2m+k+1-a}(A,\mathfrak{R})+\mathfrak{q}^{2m+k+1-a}(A,\nabla\nu,\mathfrak{R})\\
&+\mathfrak{q}^{2m+k+3-a}(A,\mathfrak{R}+A\otimes A)+\mathfrak{q}^{2m+k+3-a}(A,\nabla\nu,\mathfrak{R}+A\otimes A)]\\
&+\sum\limits_{a=0}^k\mathfrak{q}^{a+2}(A,A)\ast R_1^{2m+k-a}(\nabla\varphi,\nu)\\
&+\sum\limits_{a=0}^k\mathfrak{q}^{a+2}(A,A)\ast R_2^{2m+k-a+2}(\nabla\varphi,\nu).
\end{array}
\end{eqnarray}
\end{lem}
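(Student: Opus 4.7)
The plan is to argue by induction on $k$. The base case $k=0$ is precisely equation $(\ref{6.24})$ after observing that each term on its right-hand side is a specialization of the corresponding schematic family in $(\ref{6.25})$ with $k=0$; the terms that are displayed as summations $\sum_{a=0}^{k}\cdots$ collapse to a single contribution or are already absorbed among the other summands at that level.

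For the inductive step, I would apply one more covariant derivative and then commute $\nabla$ past $\partial_t$. The identity
$$
\partial_t \nabla^{k+1} h_{ij} \;=\; \nabla(\partial_t \nabla^{k} h_{ij}) \;+\; [\,\partial_t,\nabla\,](\nabla^{k} h_{ij})
$$
splits the analysis into two pieces. The commutator is controlled by $(\ref{6.4})$, which tells us that at each of the $k+1$ covariant slots the correction is a contraction with $\partial_t \Gamma \thickapprox \nabla E_m \ast A + E_m \ast \nabla A$. Expanding $E_m$ and $\nabla E_m$ by the schematic form $(\ref{6.20})$, every such contribution is a sum of terms of the types already listed in $(\ref{6.25})$ with $k$ replaced by $k+1$; in particular this step is responsible for the appearance of the families $R^{a+2}_4(\nabla\varphi,\nu)\ast[\cdots]$ and of $\mathfrak{q}^{a+2}(A,A)\ast R^{\,\cdot}_1$, $\mathfrak{q}^{a+2}(A,A)\ast R^{\,\cdot}_2$ on the right-hand side.

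The remaining piece is $\nabla$ applied to the right-hand side of $(\ref{6.25})$ at level $k$. The leading term $2(-1)^m \Delta^{m+1} \nabla^k h_{ij}$ becomes $2(-1)^m \Delta^{m+1} \nabla^{k+1} h_{ij}$ after commuting one covariant derivative past the $2(m+1)$ spatial ones via $(\ref{2.15})$, plus curvature correction terms that, by the Gauss equation $(\ref{2.5})$, fit into $\mathfrak{q}^{2m+k+4}(A,A)$, $\mathfrak{q}^{2m+k+4}(A,\mathfrak{R})$ or their $\nabla\nu$-enriched variants. For every other term, differentiation is algebraic: applying $\nabla$ to a $\mathfrak{q}^s$-polynomial raises its order to $s+1$ while preserving the allowed factor types, and applying $\nabla$ to an $R^s_{\alpha}(\nabla\varphi,\nu)$ term is governed by $(\ref{2.14})$, which produces a finite sum of $R^{s+1}_{\alpha}$-type terms (with either the $R^N$-derivative order, or one of the tangential indices, raised by one). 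Whenever second-order derivatives of $\varphi$ or of $\nu$ appear, they are rewritten through the Gauss-Weingarten relations $(\ref{4.2})$--$(\ref{4.5})$, which only introduces additional $A$-factors and thus stays within the admissible schematic classes.

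The main obstacle is bookkeeping rather than new estimates: one must verify that each of the many ways a single $\nabla$ can land on an existing factor (on $A$, on $\nu$, on $\mathfrak{R}$, on $R^N$, or through $g^{ij}$ via $(\ref{6.2})$) yields a term whose schematic type already appears in $(\ref{6.25})$ at level $k+1$. The coefficient conventions introduced in Section \ref{section2} for $n(S\ast T)\pm(S\ast T):=(n+1)(S\ast T)$ and $n\langle S|T\rangle\pm\langle S|T\rangle:=(n+1)\langle S|T\rangle$ are essential here: they let us merge the many constant-factor contributions coming from the Ricci commutator $(\ref{2.15})$ and from $\partial_t g^{ij}$ without tracking exact scalars. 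Once every newly generated term has been matched against the schematic classes on the right-hand side of $(\ref{6.25})$ at order $k+1$, the induction closes.
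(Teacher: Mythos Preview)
Your induction argument is correct and follows essentially the same route as the paper's proof: the paper invokes Lemma~3.1 of \cite{M} (which packages your step-by-step commutator $[\partial_t,\nabla]$ into the single identity $\partial_t\nabla^k h_{ij}=\nabla^k\partial_t h_{ij}+\mathfrak{p}_k(A,A,E_m(\varphi))$) and Lemma~7.3 of \cite{M} (which packages your use of $(\ref{2.15})$ into $\nabla^k\Delta^{m+1}h_{ij}=\Delta^{m+1}\nabla^kh_{ij}+\mathfrak{q}^{2m+k+3}(A,A)$), then substitutes $(\ref{6.24})$ and expands $E_m$ via $(\ref{6.20})$ exactly as you do. One small slip: in your final bookkeeping paragraph you mention $\nabla$ landing on $g^{ij}$ via $(\ref{6.2})$, but covariant derivatives of the metric vanish, so $(\ref{6.2})$ plays no role when applying a spatial $\nabla$---it only enters through the $[\partial_t,\nabla]$ commutator, which you have already handled.
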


\begin{proof} In Lemma 3.1 of \cite{M}, we take $T=A$. Since $a(X)=-2E_m(\varphi)A$, we get
\begin{eqnarray}\label{6.26}
\frac{\partial}{\partial t}\nabla^kh_{ij}=\nabla^k\frac{\partial h_{ij}}{\partial t}+\mathfrak{p}_k(A,A,E_m(\varphi)).
\end{eqnarray}
By Lemma 7.3 of \cite{M}, one can obtain
\begin{equation}\label{6.27}
\nabla^k\Delta^{m+1}h_{ij}=\Delta^{m+1}\nabla^kh_{ij}+\mathfrak{q}^{2m+k+3}(A,A).
\end{equation}
Then substituting $(\ref{6.24})$ into $(\ref{6.26})$, we take a direct calculation to get the required equality. This completes the proof.
\end{proof}

\begin{lem}\label{lemma6.2}
The following formula holds,
\begin{eqnarray}\label{6.28}
&&\frac{d}{dt}\int_M|\nabla^kA|^2\,d\mu_t\nonumber\\
&=&-4\int_M|\nabla^{k+m+1}A|^2\,d\mu_t+\int_M\mathfrak{q}^{2m+2k+4}(A,A,A)\,d\mu_t\nonumber\\
&&+\int_M\mathfrak{q}^{2m+2k+4}(A,A,A,A)\,d\mu_t+\int_M\mathfrak{q}^{2m+2k+4}(A,A,\mathfrak{R})\,d\mu_t\nonumber\\
&&+\int_MR_3^{2m+k+4}(\nabla\varphi,\nu)\ast\nabla^kA\,d\mu_t+\int_M\mathfrak{q}^{2m+2k+4}(A,A,\nabla\nu)\,d\mu_t\nonumber\\
&&+\int_M\mathfrak{q}^{2k+4}(A,A)\,d\mu_t+\int_MR^{2m+k+2}_1(\nabla\varphi,\nu)\ast\nabla^kA\,d\mu_t\nonumber\\
&&+\int_MR^{2m+k+4}_2(\nabla\varphi,\nu)\ast\nabla^kA\,d\mu_t+\int_M\mathfrak{q}^{2m+2k+4}(A,A,\nabla\nu,\mathfrak{R})\,d\mu_t\nonumber\\
&&+\int_M\mathfrak{q}^{2m+2k+4}(A,A,\mathfrak{R}+A\otimes A)\,d\mu_t+\int_M\mathfrak{q}^{2m+2k+4}(A,A,\nabla\nu,\mathfrak{R}+A\otimes A)\,d\mu_t\nonumber\\
&&+\int_M\mathfrak{q}^{2m+2k+4}(A,A,A,A,\nabla\nu)\,d\mu_t+\int_M\mathfrak{q}^{2k+4}(A,A,A,A)\,d\mu_t\nonumber\\
&&+\sum\limits_{l=0}^k\int_M\mathfrak{q}^{l+k+3}(A,A,A)\ast R_1^{2m+k-l}(\nabla\varphi,\nu)\,d\mu_t\nonumber\\
&&+\sum\limits_{l=0}^k\int_M\mathfrak{q}^{l+k+3}(A,A,A)\ast R_2^{2m+k-l+2}(\nabla\varphi,\nu)\,d\mu_t\nonumber\\
&&+\int_M\mathfrak{q}^{2m+2k+4}(A,A,A,A,\mathfrak{R})\,d\mu_t+\int_M\mathfrak{q}^{2m+2k+4}(A,A,A,A,\nabla\nu,\mathfrak{R})\,d\mu_t\nonumber\\
&&+\int_M\mathfrak{q}^{2m+2k+4}(A,A,A,A,\mathfrak{R}+A\otimes A)\,d\mu_t\\
&&+\int_M\mathfrak{q}^{2m+2k+4}(A,A,A,A,\nabla\nu,\mathfrak{R}+A\otimes A)\,d\mu_t\nonumber\\
&&+\sum\limits_{a=0}^k\int_MR_4^{a+2}(\nabla\varphi,\nu)\ast[\mathfrak{q}^{2m+2k-a+2}(A,A)+\mathfrak{q}^{2m+2k-a+2}(A,A,\nabla\nu)\nonumber\\
&&+\mathfrak{q}^{2k-a+2}(A,A)+R_1^{2m+k-a}(\nabla\varphi,\nu)\ast\nabla^kA+R_2^{2m+k-a+2}(\nabla\varphi,\nu)\ast\nabla^kA\nonumber\\
&&+\mathfrak{q}^{2m+2k-a+2}(A,A,\mathfrak{R})+\mathfrak{q}^{2m+2k-a+2}(A,A,\nabla\nu,\mathfrak{R})\nonumber\\
&&+\mathfrak{q}^{2m+2k-a+4}(A,A,\mathfrak{R}+A\otimes A)+\mathfrak{q}^{2m+2k-a+4}(A,A,\nabla\nu,\mathfrak{R}+A\otimes A)]\,d\mu_t\nonumber\\
&&+\sum\limits_{a=0}^k\int_MR_1^{2m+k-a}(\nabla\varphi,\nu)\ast\mathfrak{q}^{k+a+3}(A,A,A)\,d\mu_t\nonumber\\
&&+\int_MR_1^{2m}(\nabla\varphi,\nu)\ast\mathfrak{q}^{2k+3}(A,A,A)\,d\mu_t\nonumber\\
&&+\int_MR_2^{2m+2}(\nabla\varphi,\nu)\ast\mathfrak{q}^{2k+3}(A,A,A)\,d\mu_t.\nonumber
\end{eqnarray}
\end{lem}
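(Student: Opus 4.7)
The plan is a direct Leibniz-rule differentiation of $\int_M |\nabla^k A|^2\,d\mu_t$, combined with the evolution equation for $\nabla^k h_{ij}$ established in Lemma \ref{lemma6.1}, and then an integration by parts to convert the leading $\Delta^{m+1}$ term into an $L^2$-norm of $\nabla^{k+m+1}A$. Everything else is grouped into the prescribed $\mathfrak{q}^{\cdot}$ and $R^{\cdot}_{\cdot}$ polynomials.

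First I would write
\[
|\nabla^k A|^2 = g^{i_1 j_1}\cdots g^{i_{k+2} j_{k+2}}\,\nabla^k h_{i_1\cdots i_{k+2}}\,\nabla^k h_{j_1\cdots j_{k+2}},
\]
and differentiate in $t$, producing three kinds of contributions. Contribution (a) comes from $\partial_t g^{ij}$, which by $(\ref{6.2})$ equals $2g^{is}h_{sl}g^{lj}E_m(\varphi)$; contracting this with two copies of $\nabla^k A$ and using the expansion $(\ref{6.20})$ of $E_m(\varphi)$ term by term produces precisely entries of the types $\mathfrak{q}^{2m+2k+4}(A,A,A)$, $\mathfrak{q}^{2m+2k+4}(A,A,\mathfrak{R})$, $R^{2m+k+2}_1(\nabla\varphi,\nu)\ast\nabla^kA$, etc., listed in the statement. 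Contribution (b) comes from $\partial_t d\mu_t = -E_m(\varphi)\,H\,d\mu_t$ derived along the lines of $(\ref{3.1})$ with $X=-E_m(\varphi)\nu$; expanding $E_m$ as in $(\ref{6.20})$ yields the same classes of terms. Contribution (c) is the main one, namely $2\langle \nabla^k A,\partial_t\nabla^k A\rangle$, into which I substitute Lemma \ref{lemma6.1}.

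The principal term of Lemma \ref{lemma6.1} is $2(-1)^m\Delta^{m+1}\nabla^k h_{ij}$. Pairing against $\nabla^k A$, contracting, and integrating over $M$ (a closed manifold, so no boundary terms), I integrate by parts $2(m+1)$ times:
\[
\int_M 4(-1)^m\langle\nabla^k A,\Delta^{m+1}\nabla^k A\rangle\,d\mu_t
=4(-1)^m(-1)^{m+1}\int_M |\nabla^{m+1}\nabla^k A|^2\,d\mu_t
=-4\int_M |\nabla^{k+m+1}A|^2\,d\mu_t,
\]
which is the leading good term in the statement. Every other term in Lemma \ref{lemma6.1} contracts with $\nabla^k A$ to land in exactly one of the $\mathfrak{q}$- or $R^{\cdot}_{\cdot}$-classes listed; for instance $\mathfrak{q}^{2m+k+3}(A,A)\ast\nabla^kA$ produces $\mathfrak{q}^{2m+2k+4}(A,A,A)$, $R_3^{2m+k+4}\ast\nabla^kA$ stays as is, and the curvature-coupled polynomials combine similarly.

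The main obstacle is purely bookkeeping: verifying that the index count $s$ in each $\mathfrak{q}^s$ and $R^s_i$ class comes out right after absorbing the two factors $\nabla^k A$ from contribution (c), the factor $\nabla^k A\otimes\nabla^k A$ carried from the metric derivative in (a), and the factor $|\nabla^k A|^2$ in (b). Here one uses that, under the conventions of Section \ref{section2}, each $\nabla^k h_{ij}$ counts as a $\mathfrak{q}^{k+2}(A)$-factor, so contractions with two such factors raise the index by $2(k+2)=2k+4$, which matches the exponents $2m+2k+4$ and $2k+4$ appearing throughout. A further small point is that the derivation commutator terms (from $\nabla^k\Delta^{m+1}h_{ij}-\Delta^{m+1}\nabla^k h_{ij}$ in the proof of Lemma \ref{lemma6.1}) and the $\mathfrak{p}_k$-correction in $(\ref{6.26})$ both lie in the $\mathfrak{q}$-classes already present, so they do not produce new structural terms. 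Once all contributions have been classified, regrouping the equalities yields $(\ref{6.28})$ exactly, completing the proof.
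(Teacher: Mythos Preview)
Your approach is correct and essentially identical to the paper's own proof: differentiate under the integral via the Leibniz rule (picking up contributions from $\partial_t g^{ij}$, $\partial_t d\mu_t=-E_m(\varphi)H\,d\mu_t$, and $2\langle\nabla^kA,\partial_t\nabla^kA\rangle$), substitute Lemma~\ref{lemma6.1}, integrate the leading $2(-1)^m\Delta^{m+1}\nabla^kA$ term by parts to produce $-4\int_M|\nabla^{k+m+1}A|^2\,d\mu_t$, and then bookkeep the remainder into the listed $\mathfrak{q}$- and $R_i$-classes. One small slip: in the $\mathfrak{q}^s$ conventions of Section~\ref{section2}, a factor $\nabla^kA$ (with $A$ a $2$-tensor) contributes $k+1$ to the superscript, not $k+2$; the exponent $2k+4$ in $\mathfrak{q}^{2k+4}(A,A)$ arises from pairing $\nabla^kA$ (weight $k+1$) with the $\mathfrak{q}^{k+3}(A)$ term of Lemma~\ref{lemma6.1}, not from two copies of $\nabla^kA$ alone.
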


\begin{proof} Recalling $(\ref{3.1})$, we get
\begin{equation}\label{6.29}
\aligned
\frac{d}{dt}\int_M|\nabla^kA|^2\,d\mu_t=&2\int_Mg^{i_1j_1}\cdots g^{i_kj_k}g^{is}g^{jz}\frac{\partial h_{ij,i_1\cdots i_k}}{\partial t}h_{sz,j_1\cdots j_k}\,d\mu_t\\
&+\sum\limits_{l=1}^k\int_Mg^{i_1j_1}\cdots\frac{\partial g^{i_lj_l}}{\partial t}\cdots g^{i_kj_k}g^{is}g^{jz}h_{ij,i_1\cdots i_k}h_{sz,j_1\cdots j_k}\,d\mu_t\\
&+\int_Mg^{i_1j_1}\cdots g^{i_kj_k}\frac{\partial g^{is}}{\partial t}g^{jz}h_{ij,i_1\cdots i_k}h_{sz,j_1\cdots j_k}\,d\mu_t\\
&+\int_Mg^{i_1j_1}\cdots g^{i_kj_k}\frac{\partial g^{jz}}{\partial t}g^{is}h_{ij,i_1\cdots i_k}h_{sz,j_1\cdots j_k}\,d\mu_t\\
&+\int_M|\nabla^kA|^2div^N(X)\,d\mu_t.\\
\endaligned
\end{equation}
Since there holds by the definiton of $X$,
\begin{equation}\label{6.30}
\aligned
div^N(X)&=g^{ij}\langle\nabla_i\varphi|\nabla_j\nabla_t\varphi\rangle=g^{ij}\langle\nabla_i\varphi|\nabla_j(-E_m(\varphi)\nu)\rangle\\
&=g^{ij}\langle\nabla_i\varphi|-E_m(\varphi)\nabla_j\nu\rangle=-E_m(\varphi)div^N\nu\\
&=-E_m(\varphi)H,
\endaligned
\end{equation}
in view of  $(\ref{6.2})$ we have
\begin{equation}\label{6.31}
\frac{d}{dt}\int_M|\nabla^kA|^2\,d\mu_t=2\int_M\frac{\partial}{\partial t}\nabla^kA\ast\nabla^kA\,d\mu_t+(2k+5)\int_M\nabla^kA\ast\nabla^kA\ast A\ast E_m(\varphi)\,d\mu_t.
\end{equation}
By Lemma \ref{lemma6.1},
\begin{equation}\label{6.32}
2\int_M\frac{\partial}{\partial t}\nabla^kA\ast\nabla^kA\,d\mu_t=4(-1)^m\int_M\Delta^{m+1}\nabla^kA\ast\nabla^kA\,d\mu_t+remainder\,\,terms.
\end{equation}
Using divergence theorem, we get
\begin{equation}\label{6.33}
4(-1)^m\int_M\Delta^{m+1}\nabla^kA\ast\nabla^kA\,d\mu_t=-4\int_M|\nabla^{m+k+1}A|^2\,d\mu_t.
\end{equation}
By a direct calculation, we get the needed result. This completes the proof.
\end{proof}

In order to apply Gronwall inequality, we need to use $-\int_M|\nabla^{k+m+1}A|^2\,d\mu_t$ to control the other terms on the right-hand side of $(\ref{6.28})$. However, these terms maybe contain derivatives of orders which are higher than $k+m+1$. So, first of all, we should use divergence theorem to lower their orders.

For the following
$$\int_M\mathfrak{q}^{2m+2k+4}(A,A,A)\,d\mu_t,\s\s \int_M\mathfrak{q}^{2m+2k+4}(A,A,A,A)\,d\mu_t, \s\s \int_M\mathfrak{q}^{2m+2k+4}(A,A,\nabla\nu)\,d\mu_t$$
and $$\int_M\mathfrak{q}^{2m+2k+4}(A,A,A,A,\nabla\nu)\,d\mu_t,$$
by the analysis below and Proposition 7.4 of \cite{M}, we can use the divergence theorem to lower the highest derivatives, and then get the integrals of new polynomials which do not contain derivatives of orders higher than $k+m+1$. Moreover, if there is a derivative of order $k+m+1$ in an additive term, then the orders of all the other derivatives in this term must be lower than or equal to $k+m$.\medskip

For $\int_M\mathfrak{q}^{2k+4}(A,A)\,d\mu_t$ and $\int_M\mathfrak{q}^{2k+4}(A,A,A,A)\,d\mu_t$, by the same argument as above, we can transform them into the sums of integrals of polynomials whose terms do not contain derivatives of orders higher than $k+1$.

Now we want to treat with integrals of $(\ref{6.28})$ containing curvature tensors. For the following term $$\int_MR_3^{2m+k+4}(\nabla\varphi,\nu)\ast\nabla^kA\,d\mu_t,$$ recalling the definition of $R^s_3(\nabla\varphi,\nu)$ in Quantity 5 of Section \ref{section2} and the formulas $(\ref{4.2})$, $(\ref{4.3})$, $(\ref{4.4})$ and $(\ref{4.5})$, we take $\mathfrak{q}^s(A)$ out from curvature tensor. That is to say,
\begin{equation}\label{6.34}
\aligned
R_3^{2m+k+4}(\nabla\varphi,\nu)
=&\sum C_{a_1\cdots a_{\alpha}bcde}\mathfrak{q}^{a_1}(A)\ast\cdots\ast\mathfrak{q}^{a_{\alpha}}(A)\ast\mathfrak{q}^b(A)\ast\mathfrak{q}^c(A)\\
&\ast\mathfrak{q}^d(A)\ast\mathfrak{q}^e(A)\ast\langle(\nabla^{\alpha}R^N)(\omega_1,\cdots,\omega_{\alpha})(\psi,\rho)\delta|\eta\rangle
\endaligned
\end{equation}
where
\begin{equation}\label{6.35}
(a_1+1)+\cdots+(a_{\alpha}+1)+(b+1)+(c+1)+(d+1)+e=2m+k+4,
\end{equation}
and $\omega_i,\psi,\rho,\delta,\eta$ are either $\nabla\varphi$ or $\nu$. So
\begin{equation}\label{6.36}
\aligned
\int_MR_3^{2m+k+4}(\nabla\varphi,\nu)\ast\nabla^kA\,d\mu_t
=&\sum C_{a_1\cdots a_{\alpha}bcde}\int_M\mathfrak{q}^{a_1}(A)\ast\cdots\ast\mathfrak{q}^{a_{\alpha}}(A)\\
&\ast\mathfrak{q}^b(A)\ast\mathfrak{q}^c(A)\ast\mathfrak{q}^d(A)\ast\mathfrak{q}^e(A)\ast\nabla^kA\\
&\ast\langle(\nabla^{\alpha}R^N)(\omega_1,\cdots,\omega_{\alpha})(\psi,\rho)\delta|\eta\rangle\,d\mu_t.
\endaligned
\end{equation}
Since $\alpha$ may be zero, we have
\begin{equation}\label{6.37}
b+c+d+e\leqslant 2m+k+1.
\end{equation}
Because the polynomial on the right-hand side of $(\ref{6.36})$ has at least two terms, using divergence theorem to lower the order of derivatives in $(\ref{6.36})$, we get an integral of a new polynomial which does not contain derivatives of order higher than $k+m$. That is to say,
\begin{equation}\label{6.38}
\aligned
&\int_MR_3^{2m+k+4}(\nabla\varphi,\nu)\ast\nabla^kA\,d\mu_t\\
=&\sum\tilde{C}_{l_1\cdots l_{\theta}\lambda}\int_M\mathfrak{q}^{l_1}(A)\ast\cdots\ast\mathfrak{q}^{l_{\theta}}(A)\ast\langle(\nabla^{\lambda}R^N)(\omega_1,\cdots,\omega_{\lambda})(\psi,\rho)\delta|\eta\rangle\,d\mu_t
\endaligned
\end{equation}
where $\theta\geqslant2$, for $1\leqslant i\leqslant\theta$,
\begin{equation}\label{6.39}
1\leqslant l_i\leqslant k+m+1
\end{equation}
and
\begin{equation}\label{6.40}
l_1+\cdots+l_{\theta}\leqslant2m+2k+2
\end{equation}
and $\omega_i$, $\psi$, $\rho$, $\delta$, $\eta$ are either $\nabla\varphi$ or $\nu$.

Since there hold true  that $(\ref{G:2})$ and
\begin{equation}\label{6.41}
|\nabla\varphi|=\sqrt{n},\s\,\,\,\s |\nu|=1,
\end{equation}
it follows that
\begin{equation}\label{6.42}
|\langle(\nabla^{\lambda}R^N)(\omega_1,\cdots,\omega_{\lambda})(\psi,\rho)\delta|\eta\rangle|\lesssim 1.
\end{equation}
So we obtain that
\begin{equation}\label{6.43}
|\int_MR_3^{2m+k+4}(\nabla\varphi,\nu)\ast\nabla^kA\,d\mu_t|\lesssim\sum\limits_j\int_M\prod\limits_{i=0}^{k+m}|\nabla^iA|^{\alpha_{ij}}\,d\mu_t
\end{equation}
with
\begin{equation}\label{6.44}
\sum\limits_i(i+1)\alpha_{ij}\leqslant2m+2k+2.
\end{equation}

The same method also works for:
\[\int_MR_1^{2m+k+2}(\nabla\varphi,\nu)\ast\nabla^kA\,d\mu_t,\quad\quad \int_MR_1^{2m+k-l}(\nabla\varphi,\nu)\ast\mathfrak{q}^{l+k+3}(A,A,A)\,d\mu_t,\]
 \[\int_MR_1^{2m}(\nabla\varphi,\nu)\ast\mathfrak{q}^{2k+3}(A,A,A)\,d\mu_t,\quad\quad \int_MR_1^{2m+k-a}(\nabla\varphi,\nu)\ast\mathfrak{q}^{a+k+3}(A,A,A)\,d\mu_t,\]

\[\int_MR_2^{2m+k+4}(\nabla\varphi,\nu)\ast\nabla^kA\,d\mu_t,\quad\quad
\int_MR_2^{2m+k-l+2}(\nabla\varphi,\nu)\ast\mathfrak{q}^{l+k+3}(A,A,A)\,d\mu_t,\]
\[\int_MR_2^{2m+2}(\nabla\varphi,\nu)\ast\mathfrak{q}^{2k+3}(A,A,A)\,d\mu_t.\]

\[\int_MR_4^{a+2}(\nabla\varphi,\nu)\ast\mathfrak{q}^{2m+2k-a+2}(A,A)\,d\mu_t,\] \[\int_MR_4^{a+2}(\nabla\varphi,\nu)\ast\mathfrak{q}^{2m+2k-a+2}(A,A,\nabla\nu)\,d\mu_t,\]
\[\int_MR_4^{a+2}(\nabla\varphi,\nu)\ast\mathfrak{q}^{2k-a+2}(A,A)\,d\mu_t,\]

\[\int_MR_4^{a+2}(\nabla\varphi,\nu)\ast R_1^{2m+k-a}(\nabla\varphi,\nu)\ast\nabla^kA\,d\mu_t,\]
and
\[\int_MR_4^{a+2}(\nabla\varphi,\nu)\ast R_2^{2m+k-a+2}(\nabla\varphi,\nu)\ast\nabla^kA\,d\mu_t.\]
They are all bounded (up to a universal constant) by
\[\sum\limits_j\int_M\prod\limits_{i=0}^{k+m}|\nabla^iA|^{\alpha_{ij}}\,d\mu_t\]
where
\[\sum\limits_i(i+1)\alpha_{ij}\leqslant2m+2k+2.\]

Now we are going to deal with $$\int_M\mathfrak{q}^{2m+2k+4}(A,A,\mathfrak{R})\,d\mu_t.$$
Note that if the polynomial contains a derivative (for example $\nabla^pA$ or $\nabla^p\mathfrak{R}$) of order $p\geqslant k+m+1$, then all the other derivatives must be of order lower than or equal to $k+m-1$, since the order of the polynomial is $2m+2k+4$ and there are at least three factors in every additive term. In this case, using divergence theorem to lower the highest derivative, we get the integral of a new polynomial which contain derivatives of order not higher than $k+m$.

The same approach also works for:
\[\int_M\mathfrak{q}^{2m+2k+4}(A,A,\nabla\nu,\mathfrak{R})\,d\mu_t,\quad\quad\int_M\mathfrak{q}^{2m+2k+4}(A,A,\mathfrak{R}+A\otimes A)\,d\mu_t,\]
\[\int_M\mathfrak{q}^{2m+2k+4}(A,A,\nabla\nu,\mathfrak{R}+A\otimes A)\,d\mu_t,\quad\quad\int_M\mathfrak{q}^{2m+2k+4}(A,A,A,A,\mathfrak{R})\,d\mu_t,\]
\[\int_M\mathfrak{q}^{2m+2k+4}(A,A,A,A,\nabla\nu,\mathfrak{R})\,d\mu_t,\quad\quad\int_M\mathfrak{q}^{2m+2k+4}(A,A,A,A,\mathfrak{R}+A\otimes A)\,d\mu_t,\]
\[\int_M\mathfrak{q}^{2m+2k+4}(A,A,A,A,\nabla\nu,\mathfrak{R}+A\otimes A)\,d\mu_t.\]
It means that we can use divergence theorem to lower the highest derivatives and get integrals of new polynomials whose orders of derivatives are not higher than $k+m$.

Now we are going to deal with $$\int_MR_4^{2+a}(\nabla\varphi,\nu)\ast\mathfrak{q}^{2m+2k-a+2}(A,A,\mathfrak{R})\,d\mu_t.$$
Recalling the definition of $R^s_4(\nabla\varphi,\nu)$ in Quantity 6 of Section 2 and our formulas $(\ref{4.2})$, $(\ref{4.3})$, $(\ref{4.4})$ and $(\ref{4.5})$ and taking $\mathfrak{q}^s(A)$ out from curvature tensor, we see that $R_4^{2+a}(\nabla\varphi,\nu)$ can be transformed into
\begin{eqnarray*}
\sum C_{a_1\cdots a_{\alpha}bcde}\mathfrak{q}^{a_1}(A)\ast\cdots\ast\mathfrak{q}^{a_{\alpha}}(A)\ast\mathfrak{q}^b(A)\ast\mathfrak{q}^c(A)\ast\mathfrak{q}^d(A)\ast\mathfrak{q}^e(A)\\
\ast\langle(\nabla^{\alpha}R^N)(\omega_1,\cdots,\omega_{\alpha})(\psi,\rho)\delta|\eta\rangle
\end{eqnarray*}
where
\begin{eqnarray*}
(a_1+1)+\cdots+(a_{\alpha}+1)+b+c+d+e=a,
\end{eqnarray*}
and $\omega_i$, $\psi$, $\rho$, $\delta$, $\eta$ are either $\nabla\varphi$ or $\nu$. Therefore
\begin{eqnarray*}
\aligned
&\int_MR_4^{2+a}(\nabla\varphi,\nu)\ast\mathfrak{q}^{2m+2k-a+2}(A,A,\mathfrak{R})\,d\mu_t\\
=&\sum C_{a_1\cdots a_{\alpha}bcde}\int_M\mathfrak{q}^{a_1}(A)\ast\cdots\ast\mathfrak{q}^{a_{\alpha}}(A)\ast\mathfrak{q}^b(A)\ast\mathfrak{q}^c(A)\ast\mathfrak{q}^d(A)\ast\mathfrak{q}^e(A)\\
&\ast\mathfrak{q}^{2m+2k+2-a}(A,A,\mathfrak{R})
\ast\langle(\nabla^{\alpha}R^N)(\omega_1,\cdots,\omega_{\alpha})(\psi,\rho)\delta|\eta\rangle\,d\mu_t.
\endaligned
\end{eqnarray*}
Note that if the above polynomial on the right-hand side contains a derivative (for example $\nabla^pA$ or $\nabla^p\mathfrak{R}$) of order $p\geqslant k+m+1$ in an additive term, then all the other derivatives in this term must be of order lower than or equal to $k+m-4$, since the order of the polynomial($=a_1+\cdots+a_{\alpha}+b+c+d+e+2m+2k+2-a$) is not larger than $2m+2k+2$ and there are at least four factors in every additive term. In this case, using divergence theorem to lower the highest derivative, we get the integral of a new polynomial which contains derivatives of order not higher than $k+m-1$.

The same method also works for:
\begin{eqnarray*}
\int_MR_4^{2+a}(\nabla\varphi,\nu)\ast\mathfrak{q}^{2m+2k-a+2}(A,A,\nabla\nu,\mathfrak{R})\,d\mu_t,
\end{eqnarray*}
\begin{eqnarray*}
\int_MR_4^{2+a}(\nabla\varphi,\nu)\ast\mathfrak{q}^{2m+2k-a+4}(A,A,\mathfrak{R}+A\otimes A)\,d\mu_t,
\end{eqnarray*}
and
\begin{eqnarray*}
\int_MR_4^{2+a}(\nabla\varphi,\nu)\ast\mathfrak{q}^{2m+2k-a+4}(A,A,\nabla\nu,\mathfrak{R}+A\otimes A)\,d\mu_t.
\end{eqnarray*}
It means that we can use the divergence theorem to lower the highest derivatives and get integrals of new polynomials whose orders of derivatives are not higher than $k+m$.

In conclusion, we can transform all the terms (except for $-4\int_M|\nabla^{k+m+1}A|^2\,d\mu_t$) on the right-hand side of $(\ref{6.28})$ into new integrals of polynomials whose additive terms contain derivatives of order not higher than $k+m+1$. Furthermore, if there is a derivative of order $k+m+1$ in an additive term of a polynomial, then the order of all the other derivatives in this term must be lower than or equal to $k+m$.

Now we are going to estimate the above integrals. Recalling (\ref{2.17}), it is easy to know that
\begin{equation}\label{6.45}
\mathfrak{R}=\langle R^N(\nabla\varphi,\nabla\varphi)\nabla\varphi|\nabla\varphi\rangle.
\end{equation}
It follows from $(\ref{G:2})$ that
\begin{equation}\label{6.46}
\aligned
|\nabla^k\mathfrak{R}|
=&|\sum C_{a_1\cdots a_{\alpha}bcde}\langle(\nabla^{\alpha}R^N)(\nabla^{a_1+1}\varphi,\cdots,\nabla^{a_{\alpha}+1}\varphi)(\nabla^{b+1}\varphi,\nabla^{c+1}\varphi)\nabla^{d+1}\varphi|\nabla^{e+1}\varphi\rangle|\\
\leqslant&\sum C_{a_1\cdots a_{\alpha}bcde}|\langle(\nabla^{\alpha}R^N)(\nabla^{a_1+1}\varphi,\cdots,\nabla^{a_{\alpha}+1}\varphi)(\nabla^{b+1}\varphi,\nabla^{c+1}\varphi)\nabla^{d+1}\varphi|\nabla^{e+1}\varphi\rangle|\\
\leqslant&\sum C_{a_1\cdots a_{\alpha}bcde}\bar{K}_{\alpha}|\nabla^{a_1+1}\varphi|\cdots|\nabla^{a_{\alpha}+1}\varphi|\cdot|\nabla^{b+1}\varphi|\cdot|\nabla^{c+1}\varphi|\cdot|\nabla^{d+1}\varphi|\cdot|\nabla^{e+1}\varphi|.
\endaligned
\end{equation}
where
\begin{equation}\label{6.47}
(a_1+1)+\cdots+(a_{\alpha}+1)+b+c+d+e=k,
\end{equation}
and $\alpha$ may be zero.
Since
\[|\nabla^s\varphi|\lesssim|\mathfrak{q}^{s-1}(A)|\]
and $\mathfrak{q}^0(A)$ is just a universal constant, we have
\begin{equation}\label{6.48}
\aligned
|\nabla^k\mathfrak{R}|
\leqslant\sum C_{a_1\cdots a_{\alpha}bcde}\bar{K}_{\alpha}|\mathfrak{q}^{a_1}(A)|\cdots|\mathfrak{q}^{a_{\alpha}}(A)|\cdot|\mathfrak{q}^b(A)|\cdot|\mathfrak{q}^c(A)|
\cdot|\mathfrak{q}^d(A)|\cdot|\mathfrak{q}^e(A)|.
\endaligned
\end{equation}

Firstly, we estimate $$\int_M\mathfrak{q}^{2m+2k+4}(A,A,\mathfrak{R})\,d\mu_t.$$
Since it can be transformed into
\begin{equation}\label{6.49}
\sum C_{i_1\cdots i_aj_1\cdots j_b}\int_M\nabla^{i_1}A\ast\cdots\ast\nabla^{i_a}A\ast\nabla^{j_1}\mathfrak{R}\ast\cdots\ast\nabla^{j_b}\mathfrak{R}\,d\mu_t
\end{equation}
where $b\geqslant1$, $a\geqslant2$, and
\begin{equation}\label{6.50}
(i_1+1)+\cdots+(i_a+1)+(j_1+2)+\cdots+(j_b+2)=2m+2k+4
\end{equation}
with
\[0\leqslant i_t\leqslant m+k \s\s\mbox{and}\s\s 0\leqslant j_s\leqslant m+k,\]
we have
\begin{equation}\label{6.51}
\Big|\int_M\mathfrak{q}^{2m+2k+4}(A,A,\mathfrak{R})\,d\mu_t\Big|\lesssim\sum\int_M|\nabla^{i_1}A|\cdots|\nabla^{i_a}A|
\cdot|\nabla^{j_1}\mathfrak{R}|\cdots|\nabla^{j_b}\mathfrak{R}|\,d\mu_t.
\end{equation}
Noting $(\ref{6.48})$ and substituting the upper bound of $\nabla^{j_s}\mathfrak{R}$ into $(\ref{6.51})$, one can obtain
\begin{equation}\label{6.52}
\Big|\int_M\mathfrak{q}^{2m+2k+4}(A,A,\mathfrak{R})\,d\mu_t\Big|\lesssim\sum\int_M|\mathfrak{q}^{l_1}A|\cdots|\mathfrak{q}^{l_{\theta}}A|\cdot|\nabla^{i_1}A|
\cdots|\nabla^{i_a}A|\,d\mu_t,
\end{equation}
where
\begin{equation}\label{6.53}
(l_1+2)+\cdots+(l_{\theta}+2)+(i_1+1)+\cdots+(i_a+1)\leqslant2m+2k+4,
\end{equation}
and
\[\theta\geqslant1,\,\,\,\,\s a\geqslant2,\,\,\,\,\s 0\leqslant l_t\leqslant m+k.\]
By the definition of $\mathfrak{q}^{l_t}(A)$, the right-hand side of $(\ref{6.52})$ is bounded by (up to a universal constant)
\[\sum\limits_j\int_M\prod\limits_{i=0}^{k+m}|\nabla^iA|^{\alpha_{ij}}\,d\mu_t,\]
where
\[\sum\limits_i(i+1)\alpha_{ij}\leqslant2m+2k+2.\]

Because
\[|\nabla^s\nu|\lesssim|\mathfrak{q}^s(A)|\]
and
\[|\nabla^s(\mathfrak{R}+A\otimes A)|\leqslant|\nabla^s\mathfrak{R}|+|\mathfrak{q}^{s+2}(A,A)|,\]
the above trick also works for
\[\int_M\mathfrak{q}^{2m+2k+4}(A,A,\nabla\nu,\mathfrak{R})\,d\mu_t,\quad\quad\int_M\mathfrak{q}^{2m+2k+4}(A,A,\mathfrak{R}+A\otimes A)\,d\mu_t,\]
\[\int_M\mathfrak{q}^{2m+2k+4}(A,A,\nabla\nu,\mathfrak{R}+A\otimes A)\,d\mu_t,\quad\quad\int_M\mathfrak{q}^{2m+2k+4}(A,A,A,A,\mathfrak{R})\,d\mu_t,\]
\[\int_M\mathfrak{q}^{2m+2k+4}(A,A,A,A,\nabla\nu,\mathfrak{R})\,d\mu_t,\quad\quad\int_M\mathfrak{q}^{2m+2k+4}(A,A,A,A,\mathfrak{R}+A\otimes A)\,d\mu_t,\]
\[\int_M\mathfrak{q}^{2k+4}(A,A,A,A)\,d\mu_t,\quad\quad\int_M\mathfrak{q}^{2m+2k+4}(A,A,A,A,\nabla\nu,\mathfrak{R}+A\otimes A)\,d\mu_t,\]
\[\int_MR_4^{2+a}(\nabla\varphi,\nu)\ast\mathfrak{q}^{2m+2k-a+2}(A,A,\mathfrak{R})\,d\mu_t,\]
\begin{eqnarray*}
\int_MR_4^{2+a}(\nabla\varphi,\nu)\ast\mathfrak{q}^{2m+2k-a+2}(A,A,\nabla\nu,\mathfrak{R})\,d\mu_t,
\end{eqnarray*}
\begin{eqnarray*}
\int_MR_4^{2+a}(\nabla\varphi,\nu)\ast\mathfrak{q}^{2m+2k-a+4}(A,A,\mathfrak{R}+A\otimes A)\,d\mu_t,
\end{eqnarray*}
\begin{eqnarray*}
\int_MR_4^{2+a}(\nabla\varphi,\nu)\ast\mathfrak{q}^{2m+2k-a+4}(A,A,\nabla\nu,\mathfrak{R}+A\otimes A)\,d\mu_t.
\end{eqnarray*}
And they are all bounded by (up to a universal constant)
\[\sum\limits_j\int_M\prod\limits_{i=0}^{k+m}|\nabla^iA|^{\alpha_{ij}}\,d\mu_t,\]
where
\[\sum\limits_i(i+1)\alpha_{ij}\leqslant2m+2k+4.\]

As for
\[\int_M\mathfrak{q}^{2m+2k+4}(A,A,A)\,d\mu_t,\quad\quad \int_M\mathfrak{q}^{2m+2k+4}(A,A,A,A)\,d\mu_t,\]
\[\int_M\mathfrak{q}^{2m+2k+4}(A,A,\nabla\nu)\,d\mu_t,\quad
\quad\int_M\mathfrak{q}^{2m+2k+4}(A,A,A,A,\nabla\nu)\,d\mu_t,\]
by the analysis in Proposition 7.4 of \cite{M}, it is easy to see that they are all bounded by (up to a universal constant)
\[\sum\limits_j\int_M\prod\limits_{i=0}^{k+m}|\nabla^iA|^{\alpha_{ij}}|\nabla^{k+m+1}A|^{\theta_j}\,d\mu_t,\]
where
\[\sum\limits_i(i+1)\alpha_{ij}+(k+m+2)\theta_j=2m+2k+4,\s\,\,\,\theta_j=0\s \mbox{or}\,\,1.\]
To sum up, we obtain
\begin{equation}\label{6.54}
\aligned
\frac{d}{dt}\int_M|\nabla^kA|^2\,d\mu_t\lesssim&-\int_M|\nabla^{k+m+1}A|^2\,d\mu_t+\sum\limits_j\int_M\prod\limits_{i=0}^{k+m}|\nabla^iA|^{\alpha_{ij}}\,d\mu_t\\
&+\int_M|\mathfrak{q}^{2k+4}(A,A)|\,d\mu_t+\sum\limits_j\int_M\prod\limits_{i=0}^{k+m}|\nabla^iA|^{\beta_{ij}}\,d\mu_t\\
&+\sum\limits_j\int_M\prod\limits_{i=0}^{k+m}|\nabla^iA|^{\gamma_{ij}}|\nabla^{k+m+1}A|^{\theta_j}\,d\mu_t,
\endaligned
\end{equation}
where
\begin{equation}\label{6.55}
\sum\limits_i(i+1)\alpha_{ij}\leqslant2m+2k+2,
\end{equation}
\begin{equation}\label{6.56}
\sum\limits_i(i+1)\beta_{ij}\leqslant2m+2k+4,
\end{equation}
and
\begin{equation}\label{6.57}
\sum\limits_i(i+1)\gamma_{ij}+(k+m+2)\theta_j=2m+2k+4,\s\,\,\s \theta_j=0\,\,\,\mbox{or}\,\,\,1.
\end{equation}

Note that the last term on the right-hand side of $(\ref{6.54})$ may contain derivatives of order $i=k+m+1$. To obtain Gronwall inequality, we hope that they vanish.

If $\theta_j=0$, we do nothing. If $\theta_j=1$, using Young inequality, we have
\begin{equation}\label{6.58}
\aligned
&\int_M\prod\limits_{i=0}^{k+m}|\nabla^iA|^{\gamma_{ij}}|\nabla^{k+m+1}A|\,d\mu_t\\
\leqslant&\frac{1}{2\varepsilon_j}\int_M\prod\limits_{i=0}^{k+m}|\nabla^iA|^{2\gamma_{ij}}\,d\mu_t+\varepsilon_j\int_M|\nabla^{k+m+1}A|^2\,d\mu_t.
\endaligned
\end{equation}
At this time,
\[\sum\limits_{i=0}^{k+m}(i+1)\cdot2\gamma_{ij}=2m+2k+4.\]
Choose $\{\varepsilon_j\}$ such that $\sum\limits_j\varepsilon_j$ is sufficiently small. So
\begin{equation}\label{6.59}
\aligned
\frac{d}{dt}\int_M|\nabla^kA|^2\,d\mu_t\lesssim &-\int_M|\nabla^{k+m+1}A|^2\,d\mu_t +\sum\limits_j\int_M\prod\limits_{i=0}^{k+m}|\nabla^iA|^{\beta_{ij}}\,d\mu_t\\
&+\int_M|\mathfrak{q}^{2k+4}(A,A)|\,d\mu_t,
\endaligned
\end{equation}
where $\beta_{ij}$ satisfies $(\ref{6.56})$. It is easy to see that
\begin{equation}\label{6.60}
\int_M|\mathfrak{q}^{2k+4}(A,A)|\,d\mu_t\lesssim\sum\limits_j\int_M\prod\limits_{i=0}^{k+1}|\nabla^iA|^{\eta_{ij}}\,d\mu_t,
\end{equation}
where
\begin{equation}\label{6.61}
\sum\limits_{i=0}^{k+1}(i+1)\eta_{ij}=2k+4.
\end{equation}

Set
\begin{equation}\label{6.62}
\rho_j:=\sum\limits_{i=0}^{k+m}(i+1)\beta_{ij}.
\end{equation}
By H\"{o}lder inequality, we have
\begin{equation}\label{6.63}
\int_M\prod\limits_{i=0}^{k+m}|\nabla^iA|^{\beta_{ij}}\,d\mu_t\leqslant\prod\limits_{i=0}^{k+m}||\nabla^iA||^{\beta_{ij}}_{L^{\frac{\rho_j}{i+1}}(\mu_t)}.
\end{equation}
In view of $(\ref{5.50})$, $(\ref{6.56})$ and $(\ref{6.62})$, one can easily get that
\begin{equation}\label{6.64}
||\nabla^iA||_{L^{\frac{\rho_j}{i+1}}(\mu_t)}\lesssim||\nabla^iA||_{L^{\frac{2m+2k+4}{i+1}}(\mu_t)}.
\end{equation}
By taking $q=2$, $s=k+m+1$, $r=n+1$, $j=i$ and $T=A$ in Lemma \ref{lemma5.11}, we have
\begin{equation}\label{6.65}
||\nabla^iA||_{L^{\frac{2m+2k+4}{i+1}}(\mu_t)}\lesssim||A||^{a_{ij}}_{W^{k+m+1,2}(\mu_t)}||A||^{1-a_{ij}}_{L^{n+1}(\mu_t)}
\end{equation}
with
\begin{equation}\label{6.66}
a_{ij}:=\frac{\frac{i+1}{2m+2k+4}-\frac{i}{n}-\frac{1}{n+1}}{\frac{1}{2}-\frac{k+m+1}{n}-\frac{1}{n+1}}\in\Big[\frac{i}{k+m+1},\,1\Big].
\end{equation}
By the same argument as the author dealt with the formula $(7.3)$ in \cite{M}, we know that $(\ref{6.66})$ is true for all $i\in\{0,1,\cdots,k+m\}$. Therefore, by Proposition \ref{lemma5.1} we obtain
\begin{eqnarray}\label{6.67}
\aligned
||A||_{W^{k+m+1,2}(\mu_t)}\lesssim&\sum\limits_{s=1}^{k+m+1}||\nabla^{k+m+1}A||_{L^2(\mu_t)}^{\frac{s}{k+m+1}}||A||_{L^2(\mu_t)}^{1-\frac{s}{k+m+1}}+||A||_{L^2(\mu_t)}\\
\lesssim&\sum\limits_{s=1}^{k+m+1}||\nabla^{k+m+1}A||_{L^2(\mu_t)}^{\frac{s}{k+m+1}}||A||_{L^{n+1}(\mu_t)}^{1-\frac{s}{k+m+1}}+||A||_{L^{n+1}(\mu_t)}\\
\lesssim&\sum\limits_{s=1}^{k+m+1}||\nabla^{k+m+1}A||_{L^2(\mu_t)}^{\frac{s}{k+m+1}}+1\\
\lesssim&\sum\limits_{s=1}^{k+m+1}||\nabla^{k+m+1}A||_{L^2(\mu_t)}+1\\
\lesssim&||\nabla^{k+m+1}A||_{L^2(\mu_t)}+1.
\endaligned
\end{eqnarray}
Here we have used Young inequality and $(\ref{5.6})$. So, it follows that
\begin{equation}\label{6.68}
\int_M\prod\limits_{i=0}^{k+m}|\nabla^iA|^{\beta_{ij}}\,d\mu_t\lesssim(1+||\nabla^{k+m+1}A||_{L^2(\mu_t)})^{\sum\limits_{i=0}^{k+m}a_{ij}\beta_{ij}}.
\end{equation}
Now, we need to prove that
\[\sum\limits_{i=0}^{k+m}a_{ij}\beta_{ij}<2.\]
Indeed,
\begin{equation}\label{6.69}
\sum\limits_{i=0}^{k+m}a_{ij}\beta_{ij}=\frac{\frac{\rho_j}{2m+2k+4}-\frac{\rho_j}{n}+\frac{\sum\limits_{i=0}^{k+m}\beta_{ij}}{n(n+1)}}{\frac{1}{2}-\frac{k+m+1}{n}-\frac{1}{n+1}}.
\end{equation}
Clearly,
\begin{equation}\label{6.70}
\sum\limits_{i=0}^{k+m}\beta_{ij}\geqslant\sum\limits_{i=0}^{k+m}\beta_{ij}\frac{i+1}{k+m+1}=\frac{\rho_j}{k+m+1}
\end{equation}
and the denominator of $(\ref{6.69})$ is negative. So
\begin{equation}\label{6.71}
\aligned
&\sum\limits_{i=0}^{k+m}a_{ij}\beta_{ij}
\leqslant\rho_j\frac{\frac{1}{2m+2k+4}-\frac{1}{n}+\frac{1}{n(n+1)(k+m+1)}}{\frac{1}{2}-\frac{k+m+1}{n}-\frac{1}{n+1}}\\
=&\frac{\rho_j}{2m+2k+4}\Big\{2-\frac{4}{(k+m+1)[2(k+m+1)(n+1)-n(n-1)]}\Big\}<2.
\endaligned
\end{equation}

For the term $\int_M|\mathfrak{q}^{2k+4}(A,A)|\,d\mu_t$, also it can be dealt with as Mantegazza did in \cite{M}.

In conclusion, we derive
\begin{equation}\label{6.72}
\frac{d}{dt}\int_M|\nabla^kA|^2\,d\mu_t\lesssim-\int_M|\nabla^{k+m+1}A|^2\,d\mu_t+(||\nabla^{k+m+1}A||^2_{L^2(\mu_t)}+1)^{1-\delta_0}.
\end{equation}
Young Inequality yields
\begin{equation}\label{6.73}
\frac{d}{dt}\int_M|\nabla^kA|^2\,d\mu_t\lesssim-\int_M|\nabla^{k+m+1}A|^2\,d\mu_t+1.
\end{equation}
Using Proposition \ref{lemma5.1} and Young inequality, we obtain
\begin{equation}\label{6.74}
||\nabla^kA||^2_{L^2(\mu_t)}\lesssim||\nabla^{k+m+1}A||^2_{L^2(\mu_t)}+1.
\end{equation}
Substituting $(\ref{6.74})$ into $(\ref{6.73})$, we get
\begin{equation}\label{6.75}
\frac{d}{dt}\int_M|\nabla^kA|^2\,d\mu_t\leqslant C(-\int_M|\nabla^kA|^2\,d\mu_t+1),
\end{equation}
where $C$ is a universal constant. It follows from the Gronwall's inequality that
\begin{equation}\label{6.76}
\aligned
||\nabla^kA||^2_{L^2(\mu_t)}\leqslant&||\nabla^kA||^2_{L^2(\mu_0)}\cdot\exp\{-Ct\}+1-\exp\{-Ct\}\\
\leqslant&||\nabla^kA||^2_{L^2(\mu_0)}+1.
\endaligned
\end{equation}
Furthermore, from $(\ref{5.67})$ we have
\begin{equation}\label{6.77}
||\nabla^kA||_{L^{\infty}(M_t)}\lesssim||A||_{W^{k+[\frac{n}{2}]+1,2}(M_t)}\leqslant C_k.
\end{equation}

\section{Long Time Existence}
Now let us focus on how to extend $\varphi(t)$ to $\varphi(T)$ smoothly. Recall that we have the following isometric embedding
\[\Xi:N\longrightarrow\mathbb{R}^{n+1+L}.\]
$\nabla$ is the connection induced by $\varphi(t)$ and $D$ denotes the connection induced by $\Xi\circ\varphi(t)$. As is described in the introduction, our aim is to prove that
\begin{equation}\label{6.78}
\max\limits_{M_t}\Big|D_{i_1}D_{i_2}\cdots D_{i_k}D_t^s\varphi\Big|\leqslant C_{s,k}.
\end{equation}

Firstly, we claim that, for all $k\geqslant0$, there holds
\begin{equation}\label{6.79}
\max\limits_{M_t}\Big|\nabla_{i_1}\nabla_{i_2}\cdots\nabla_{i_k}\varphi\Big|\leqslant C_k.
\end{equation}

\begin{proof} For the case $k=0$, from \[D_t\varphi=-E_m(\varphi)\nu,\] we have
\[\max\limits_{M_t}|D_t\varphi|=\max\limits_{M_t}|E_m(\varphi)|\leqslant C_m.\]
It follows that for any $t\in[0,T)$
\begin{equation}\label{6.80}
\max\limits_{M_t}|\varphi(t)|\leqslant\max\limits_{M_0}|\varphi_0|+C_m\cdot T
\end{equation}
where $T<\infty$.

By an induction argument, we can prove the following equality:
\begin{equation}\label{6.81}
\nabla_{i_1}\nabla_{i_2}\cdots\nabla_{i_k}\varphi=\nabla_{i_ki_{k-1}\cdots i_1}\varphi+\sum\limits_{p=1}^k\sum\limits_{\substack{j_1+\cdots+j_p+l\leqslant k-1,\\l\geqslant1}}\partial^{j_1}\Gamma\cdots\partial^{j_p}\Gamma\nabla^l\varphi,
\end{equation}
where $k\geqslant2$ and the second term on the right-hand side of $(\ref{6.81})$ contains contraction. Using the same approach as in page 173-174 of \cite{M}, we have
\begin{equation}\label{6.82}
\Big|\Big|\partial^k\frac{\partial\Gamma}{\partial t}\Big|\Big|_{L^{\infty}(M_t)}\leqslant C_k
\end{equation}
and
\begin{equation}\label{6.83}
||\partial^k\Gamma||_{L^{\infty}(M_t)}\leqslant C_k,
\end{equation}
and for any $k$ and any $s\in\mathbb{N}$
\begin{equation}\label{6.84}
\Big|\Big|\nabla^k\frac{\partial g}{\partial t}\Big|\Big|_{L^{\infty}(M_t)}\leqslant C_k
\end{equation}
and
\begin{equation}\label{6.85}
||\partial^k\nabla^sA||_{L^{\infty}(M_t)}\leqslant C_{k,s}.
\end{equation}
By Lemma 7.6 of \cite{M}, there exists a positive universal constant $\tilde{C}_1$ which depends on $T$ such that for all $t\in[0,T)$,
\begin{equation}\label{6.86}
\frac{1}{\tilde{C}_1}\leqslant g(t)\leqslant\tilde{C}_1.
\end{equation}
It is easy to see that
\begin{equation}\label{6.87}
\frac{1}{\tilde{C}_1}\leqslant g^{-1}(t)\leqslant\tilde{C}_1.
\end{equation}
So we have
\begin{equation}\label{6.88}
\sum\limits_{i_k,i_{k-1},\cdots,i_1}|\nabla_{i_ki_{k-1}\cdots i_1}\varphi|^2\leqslant\tilde{C}_1^k|\nabla^k\varphi|^2.
\end{equation}
Combining $(\ref{4.5})$, $(\ref{6.81})$, $(\ref{6.83})$, $(\ref{6.85})$, $(\ref{6.87})$ and $(\ref{6.88})$, we derive $(\ref{6.79})$. This completes the proof.
\end{proof}

From $(\ref{6.79})$, $(\ref{6.83})$ and the definition of covariant derivative, one can easily get
\begin{equation}\label{6.89}
\max\limits_{M_t}|\nabla_{i_1}\nabla_{i_2}\cdots\nabla_{i_k}\nabla_{j_1j_2\cdots j_l}\varphi|\leqslant C_{kl}.
\end{equation}

Since
\begin{equation}\label{6.90}
D_t\nabla_t^s\varphi=D_t(P(\varphi)\nabla_t^s\varphi)=\nabla_t^{s+1}\varphi+DP(\varphi)(\nabla_t\varphi)\nabla_t^s\varphi,
\end{equation}
By an induction argument, it is easy to see that there exist multi-linear forms $B_{a_1\cdots a_k}(y)$ on $TN(y\in N)$ such that
\begin{equation}\label{6.91}
D_t^s\varphi=\nabla_t^s\varphi+\sum B_{a_1\cdots a_k}(\varphi)(\nabla_t^{a_1}\varphi,\cdots,\nabla_t^{a_{k-1}}\varphi)(\nabla_t^{a_k}\varphi)
\end{equation}
where $k\geqslant2$; $1\leqslant a_i\leqslant s-1$ for $1\leqslant i\leqslant k$; and $a_1+\cdots+a_k=s$. Moreover, from $(\ref{6.91})$ we infer that
\begin{equation}\label{6.92}
\aligned
D_{b_1}\cdots D_{b_l}D_t^s\varphi=&\nabla_{b_1}\cdots\nabla_{b_l}\nabla_t^s\varphi\\
&+\sum B_{\vec{e}c_1\cdots c_r}(\varphi)(\tilde{\nabla}_{\vec{e}_1}\nabla_t^{c_1}\varphi,\cdots,\tilde{\nabla}_{\vec{e}_{r-1}}\nabla_t^{c_{r-1}}\varphi)(\tilde{\nabla}_{\vec{e}_r}\nabla_t^{c_r}\varphi).
\endaligned
\end{equation}
Here
\[\vec{e}:=(\vec{e}_1,\cdots,\vec{e}_r)=\sigma(b_1,b_2,\cdots,b_l)\] with $|\vec{e}_1|\geqslant0$, $\cdots$, $|\vec{e}_r|\geqslant0$, where $\sigma$ denotes a permutation; and $$c_1+\cdots+c_r=s$$ with $c_1\geqslant0$, $\cdots$, $c_{r-1}\geqslant0$ and $c_r\geqslant1$.

So, in order to prove $(\ref{6.78})$, we only have to estimate $\max\limits_{M_t}|\tilde{\nabla}_{\vec{e}_i}\nabla_t^{c_i}\varphi|$. For this goal, we need following two lemmas.
\begin{lem}\label{lemma6.3}
For $k\geqslant2$, there exists $C_k$ such that
\begin{equation}\label{6.93}
\max\limits_{M_t}|\nabla_{i_1}\nabla_{i_2}\cdots\nabla_{i_k}\nu|\leqslant C_k.
\end{equation}
Obviously, $(\ref{6.93})$ and $(\ref{6.83})$ imply the following estimate
\begin{equation}\label{6.94}
\max\limits_{M_t}|\nabla_{i_1}\nabla_{i_2}\cdots\nabla_{i_k}\nabla_{j_1j_2\cdots j_l}\nu|\leqslant C_{kl}.
\end{equation}
\end{lem}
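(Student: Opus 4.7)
The plan is to mirror the induction that established $(\ref{6.79})$, substituting $\nu$ for $\varphi$ and using $(\ref{4.4})$ in place of $(\ref{4.5})$. Formula $(\ref{4.4})$ expresses the covariant-derivative tensor as
\[
\nabla^s\nu=\mathfrak{q}^s(A)\ast\nabla\varphi+\mathfrak{q}^s(A)\otimes\nu\qquad(s\geqslant 2),
\]
where $\mathfrak{q}^s(A)$ is a polynomial in $A,\nabla A,\ldots,\nabla^{s-1}A$ of total order $s$. Combined with the uniform estimate $(\ref{6.77})$ on every $||\nabla^p A||_{L^{\infty}(M_t)}$, together with the identities $|\nabla\varphi|=\sqrt{n}$ and $|\nu|=1$, this already yields a uniform pointwise bound $|\nabla^k\nu|\leqslant\widetilde{C}_k$ on the magnitude of the covariant-derivative tensor for every $t\in[0,T_{\max})$.

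Next I would establish the coordinate identity analogous to $(\ref{6.81})$,
\[
\nabla_{i_1}\nabla_{i_2}\cdots\nabla_{i_k}\nu=\nabla_{i_ki_{k-1}\cdots i_1}\nu+\sum_{p=1}^{k}\sum_{\substack{j_1+\cdots+j_p+l\leqslant k-1\\ l\geqslant 1}}\partial^{j_1}\Gamma\cdots\partial^{j_p}\Gamma\ast\nabla^l\nu,
\]
obtained by a straightforward induction on $k$: each time one converts an application of $\nabla_{\partial_{i_r}}$ into a coordinate component of the covariant-derivative tensor, the only correction is a contraction with derivatives of the Christoffel symbols of $g$. The derivation is formally identical to the one already given for $\varphi$, because $\nu$ is simply a vector field along $\varphi$ and the Christoffel corrections depend only on the metric $g$.

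Finally I would combine these ingredients. The bound $(\ref{6.87})$ on $g^{\pm 1}$ converts the tensor estimate $|\nabla^k\nu|\leqslant\widetilde{C}_k$ into a uniform bound on the coordinate components $|\nabla_{i_ki_{k-1}\cdots i_1}\nu|$; the estimate $(\ref{6.83})$ controls each $\partial^{j_i}\Gamma$; and an induction on $k$ supplies the required $L^{\infty}$ bound on $|\nabla^l\nu|$ for $l\leqslant k-1$, with base cases furnished by $|\nu|=1$ and $|\nabla\nu|=|A|\leqslant C_0$ via $(\ref{5.4})$ and $(\ref{6.77})$. Substituting into the displayed identity yields $\max_{M_t}|\nabla_{i_1}\cdots\nabla_{i_k}\nu|\leqslant C_k$, which is $(\ref{6.93})$. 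The assertion $(\ref{6.94})$ then follows immediately from $(\ref{6.93})$ and $(\ref{6.83})$ exactly as noted in the statement. The only real obstacle is purely combinatorial bookkeeping — both in expanding $\mathfrak{q}^s(A)$ and in accounting for the Christoffel corrections — but no new analytic input is required beyond the estimates already collected in Sections~\ref{section5} and the earlier part of this section.
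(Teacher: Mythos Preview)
Your proposal is correct and follows essentially the same route as the paper: establish the coordinate identity $(\ref{6.95})$ (your analogue of $(\ref{6.81})$), control the leading term via $(\ref{6.87})$ and the bound $|\nabla^k\nu|\leqslant\widetilde C_k$ coming from $(\ref{4.4})$ together with the $L^\infty$ estimates on $\nabla^jA$, and handle the Christoffel corrections with $(\ref{6.83})$. The paper cites $(\ref{6.85})$ where you cite $(\ref{6.77})$, but since $\mathfrak{q}^s(A)$ involves only covariant derivatives of $A$, your reference is already sufficient.
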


\begin{proof}
Inductively, we can get
\begin{equation}\label{6.95}
\nabla_{i_1}\nabla_{i_2}\cdots\nabla_{i_k}\nu=\nabla_{i_ki_{k-1}\cdots i_1}\nu+\sum\limits_{p=1}^k\sum\limits_{\substack{j_1+\cdots+j_p+l\leqslant k-1\\l\geqslant1}}\partial^{j_1}\Gamma\cdots\partial^{j_p}\Gamma\cdot\nabla^l\nu,
\end{equation}
where $k\geqslant2$ and the second term on the right-hand side of $(\ref{6.95})$ concerns the tensor contraction with respect to the metric. From $(\ref{6.87})$ it is easy to know that
\begin{equation}\label{6.96}
\sum\limits_{i_k,i_{k-1},\cdots,i_1}|\nabla_{i_ki_{k-1}\cdots i_1}\nu|^2\leqslant\tilde{C}_1^k\cdot|\nabla^k\nu|^2.
\end{equation}
Combining $(\ref{4.4})$, $(\ref{6.83})$, $(\ref{6.85})$, $(\ref{6.95})$ and $(\ref{6.96})$, we get $(\ref{6.93})$. This completes the proof.
\end{proof}

\begin{lem}\label{lemma6.4}
There holds true
\begin{equation}\label{6.97}
\max\limits_{M_t}\Big|\frac{\partial^kE_m(\varphi)}{\partial x^{i_1}\cdots\partial x^{i_k}}\Big|\leqslant C_k.
\end{equation}
\end{lem}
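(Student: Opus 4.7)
The plan is to leverage the explicit structural decomposition of $E_m(\varphi)$ given in (6.20) and convert partial derivatives into covariant ones plus correction terms, then invoke the uniform estimates already at hand. Recall that (6.20) expresses $E_m(\varphi)$ as a finite universal polynomial in $A$, $\nabla\nu$, $\nu$, $\nabla\varphi$ and covariant derivatives of the ambient curvature $R^N$ pulled back along $\varphi(t)$, all contracted via powers of $g^{ij}$. So it suffices to control partial derivatives of each such building block.

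First, I would prove a general passage formula: for any tensor field $T$ on $M_t$, one has
\[
\partial^{a_1}\!\cdots\partial^{a_k} T \;=\; \nabla_{a_1}\cdots\nabla_{a_k} T \;+\; \sum_{p\geqslant 1}\sum_{\substack{j_1+\cdots+j_p+l\leqslant k-1\\ l\geqslant 0}} \partial^{j_1}\Gamma\ast\cdots\ast\partial^{j_p}\Gamma \ast \nabla^{l} T,
\]
obtained by induction on $k$ exactly as in the derivations of (6.81) and (6.95). Applied to $T=A$, $T=\nu$ and $T=\nabla^{s}\varphi$ and combined with the bounds (6.83) on $\partial^{j}\Gamma$, (6.85) on $\partial^{j}\nabla^{s}A$, (6.94) on $\partial^{j}\nabla^{s}\nu$ and (6.89) on $\partial^{j}\nabla^{s}\varphi$, it follows that every mixed partial derivative of the non-curvature factors appearing in (6.20) is bounded by a universal constant depending only on the order. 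The metric factors $g^{ij}$ are handled analogously: $\partial^k g^{ij}$ is a universal polynomial in $g^{-1}$ and $\partial^{j}\Gamma$ via differentiating $g^{il}\Gamma^{k}_{lj}$ expressions, and is bounded by (6.83) together with (6.86)--(6.87).

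Second, for the curvature-type factors such as $\langle(\nabla^{q}R^N)(\nabla^{a_1+1}\varphi,\cdots,\nabla^{a_q+1}\varphi)(\nabla^{b+1}\varphi,\nabla^{c+1}\varphi)\nabla^{d+1}\nu|\nabla^{e+1}\varphi\rangle$, I would apply the Faà di Bruno chain rule to the scalar function obtained by fixing a coordinate chart on $N$. The partial derivatives of the components of $\nabla^{q}R^N$ evaluated along $\varphi$ produce sums of terms of the form (derivatives of $R^N$-components in $N$) times polynomials in partial derivatives of $\varphi$. The first type is universally bounded by the bounded geometry assumption (Remark 1.2) and the uniform a priori bound (6.80) for $|\varphi|$; the second type is bounded by (6.89). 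Combining these gives a universal bound on $\partial^{k}$ applied to each curvature factor.

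Putting it together, $\partial^{k} E_m(\varphi)$ is a finite universal polynomial in the quantities already bounded in (6.83), (6.85), (6.89), (6.94) and in bounded-geometry derivatives of $R^N$ along $\varphi$, so the desired estimate
\[
\max_{M_t}\bigl|\partial^{k} E_m(\varphi)\bigr| \;\leqslant\; C_k
\]
follows. The only slightly delicate point is the curvature term: one must remember that $\partial^{j}\varphi$ and not $\nabla^{j}\varphi$ enters the chain rule, but the passage formula above converts between them at the cost of factors controlled by (6.83), so no genuinely new estimate is needed. This is the main technical obstacle and is the reason why we insist on the bounded-geometry hypothesis (\ref{G:2}) for all orders $\alpha\in\mathbb{N}$.
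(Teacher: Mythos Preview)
Your proposal is correct and follows essentially the same route as the paper: decompose $E_m(\varphi)$ via (\ref{6.20}), convert partial derivatives to covariant ones plus Christoffel corrections, and invoke the already-established bounds (\ref{6.83}), (\ref{6.85}), (\ref{6.89}), (\ref{6.94}) together with bounded geometry. The only noteworthy technical variation is that the paper obtains $\|\partial^k g^{-1}\|_{L^\infty}$ by going through the time evolution of $g$ (deriving (\ref{6.100})--(\ref{6.103}) from (\ref{6.84})), whereas you get it more directly from $\nabla g^{-1}=0$, i.e.\ $\partial_l g^{ij}=-\Gamma^i_{lk}g^{kj}-\Gamma^j_{lk}g^{ik}$, which expresses $\partial^k g^{-1}$ as a polynomial in $g^{-1}$ and $\partial^{\leqslant k-1}\Gamma$; and the paper handles the curvature factors intrinsically via the differentiation rule (\ref{2.14}) rather than via Fa\`a di Bruno in $N$-coordinates, which spares it from having to invoke (\ref{6.80}) to localize in a chart.
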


\begin{proof}
We need to take a complicated computation to estimate all terms on the right hand side of $(\ref{6.20})$. Since these terms can be treated by almost the same method, for simplicity we only pick the following term to make estimation
\begin{equation}\label{6.98}
\max\limits_{M_t}\Big|\frac{\partial^kR_1^{2m}(\nabla\varphi,\nu)}{\partial x^{i_1}\cdots\partial x^{i_k}}\Big|.
\end{equation}
By the definition,
\begin{equation}\label{6.99}
\aligned
R_1^{2m}(\nabla\varphi,\nu)
=\sum C_{a_1\cdots a_qbcde}\langle(\nabla^qR^N)(\nabla^{a_1+1}\varphi,\cdots,\nabla^{a_q+1}\varphi)(\nabla^b\nu,\nabla^{c+1}\varphi)\nabla^d\nu|\nabla^e\nu\rangle.
\endaligned
\end{equation}
Because the right-hand side of $(\ref{6.99})$ concerns some contractions with respect to $g^{-1}$, we need to estimate
\[
||\partial^k(g^{-1})||_{L^{\infty}(M_t)}.
\]
Using $(7.7)$ of \cite{M} inductively, from our formula $(\ref{6.84})$ we get
\begin{equation}\label{6.100}
\Big|\Big|\partial^k\Big(\frac{\partial g}{\partial t}\Big)\Big|\Big|_{L^{\infty}(M_t)}\leqslant C_k
\end{equation}
which implies
\begin{equation}\label{6.101}
||\partial^kg||_{L^{\infty}(M_t)}\leqslant C_k(T).
\end{equation}
Noting
\begin{equation}\label{6.102}
\partial^k(g^{-1}\circ g)=0,
\end{equation}
one can obtain inductively that
\begin{equation}\label{6.103}
||\partial^k(g^{-1})||_{L^{\infty}(M_t)}\leqslant C_k(T).
\end{equation}
Substituting $(\ref{6.89})$, $(\ref{6.94})$ and $(\ref{6.99})$ into $(\ref{6.98})$, we get an upper bound of $(\ref{6.98})$ which depends upon $k$. This completes the proof.
\end{proof}

Now we turn to estimate $\max\limits_{M_t}|\tilde{\nabla}_{\vec{e}_i}\nabla_t^{c_i}\varphi|$. Indeed, we can get a stronger conclusion. It is the following theorem.

\begin{thm}\label{theorem6.5}
(1). For any $s\in\mathbb{N}$ and any $l\in\mathbb{N}^+$, we have
\begin{equation}\label{6.104}
\max\limits_{M_t}|\nabla_t^{a_1}\tilde{\nabla}_{\vec{i}_1}\nabla_t^{a_2}\tilde{\nabla}_{\vec{i}_2}\cdots\nabla_t^{a_k}\tilde{\nabla}_{\vec{i}_k}\varphi|\leqslant C_{s,l},
\end{equation}
\begin{equation}\label{6.105}
\max\limits_{M_t}|\nabla_t^{a_1}\tilde{\nabla}_{\vec{i}_1}\nabla_t^{a_2}\tilde{\nabla}_{\vec{i}_2}\cdots\nabla_t^{a_k}\tilde{\nabla}_{\vec{i}_k}(E_m(\varphi))|\leqslant C_{s,l},
\end{equation}
\begin{equation}\label{6.106}
\max\limits_{M_t}|\nabla_t^{a_1}\tilde{\nabla}_{\vec{i}_1}\nabla_t^{a_2}\tilde{\nabla}_{\vec{i}_2}\cdots\nabla_t^{a_k}\tilde{\nabla}_{\vec{i}_k}\nu|\leqslant C_{s,l},
\end{equation}
where $(a_1,a_2,\cdots,a_k)$ and $(\vec{i}_1,\vec{i}_2,\cdots,\vec{i}_k)$ satisfy
\[a_1+\cdots+a_k=s,\,\,\,\,\s a_1\geqslant0,\cdots,a_k\geqslant0\]
and
\[|\vec{i}_1|+\cdots+|\vec{i}_k|=l,\,\,\,\,\s |\vec{i}_1|\geqslant0,\cdots,|\vec{i}_k|\geqslant0.\]

(2). For any $s\in\mathbb{N}$, any $l\in\mathbb{N}^+$ and any $p\in\mathbb{N}$, there hold true
\begin{equation}\label{6.107}
||\partial_t^s\partial^l\nabla^pA||_{L^{\infty}(M_t)}\leqslant C_{s,l,p}
\end{equation}
and
\begin{equation}\label{6.108}
||\partial_t^s\partial^lg^{-1}||_{L^{\infty}(M_t)}\leqslant C_{s,l}.
\end{equation}
\end{thm}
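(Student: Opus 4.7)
The plan is to prove all four estimates of Theorem \ref{theorem6.5} simultaneously by strong induction on the total number $s$ of time covariant derivatives, with the inner loop on the space order $l$ handled by commutation and by the space-only estimates already established above. The base case $s=0$ is immediate: (\ref{6.104}) reduces to (\ref{6.89}); (\ref{6.106}) reduces to (\ref{6.94}); (\ref{6.105}) follows by Lemma \ref{lemma6.4} once the mixed $\tilde{\nabla}$'s are converted into iterated partial derivatives using (\ref{6.83}); and (\ref{6.107}), (\ref{6.108}) with $s=0$ are exactly (\ref{6.85}) and (\ref{6.103}). Assume the statements hold for all orders less than $s$.

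I would first treat Part (2), which isolates the purely analytic content. For $g^{-1}$: differentiate the identity $g^{-1}\circ g=0$ in $t$ to get $\partial_t g^{-1}=-g^{-1}(\partial_t g)g^{-1}$, then combine (\ref{6.1}) and Leibniz to obtain an explicit polynomial expression for $\partial_t^s\partial^l g^{-1}$ in factors of the form $\partial_t^{s'}\partial^{l'}g^{-1}$, $\partial_t^{s''}\partial^{l''}A$ and $\partial_t^{s'''}\partial^{l'''}E_m(\varphi)$ with strictly smaller $s$; by the inductive hypothesis every factor is controlled, yielding (\ref{6.108}). Next, differentiating (\ref{6.4}) an analogous number of times establishes a universal bound on $\|\partial_t^s\partial^l\Gamma\|_{L^\infty(M_t)}$, extending (\ref{6.82})--(\ref{6.83}) to all time orders. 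For (\ref{6.107}), apply $\partial_t^{s-1}\partial^l$ to the evolution equation (\ref{6.25}) for $\nabla^k h_{ij}$ from Lemma \ref{lemma6.1}; every term on the right-hand side is a polynomial in $A$, its covariant derivatives, curvature tensors of $N$, $\nabla\nu$, $g^{-1}$ and $\Gamma$, so after converting the $\nabla$'s into partial derivatives via the bounded Christoffel symbols and applying Leibniz, each factor falls into the inductive hypothesis or the above bounds for $g^{-1}$ and $\Gamma$.

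For Part (1), I would reduce an arbitrary interleaving of $\nabla_t^{a_j}$'s and $\tilde{\nabla}_{\vec{i}_j}$'s to the normal form $\nabla_t^{s}\tilde{\nabla}_{\vec{I}}$ by successive applications of the commutation identity (\ref{2.12})--(\ref{2.13}), taking $\nu'$ to be $\varphi$, $\nu$, or the scalar $E_m(\varphi)$ as appropriate. Each swap of a $\nabla_t$ past a $\nabla_i$ introduces curvature terms of $R^N$ evaluated on $\nabla_t\varphi=-E_m(\varphi)\,\nu$ together with lower-order factors; since the resulting remainder has strictly fewer $\nabla_t$'s appearing in each individual factor, the inductive hypothesis applies, and the uniform bounded-geometry assumption (\ref{G:2}) controls the curvature factors. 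To bound the remaining normal-form term $\nabla_t^s\tilde{\nabla}_{\vec{I}}\varphi$, iterate the flow identity $\nabla_t\varphi=-E_m(\varphi)\nu$ to write $\nabla_t^s\varphi$ as a universal polynomial in $\nabla_t^{i}E_m(\varphi)$ and $\nabla_t^{j}\nu$ with $i,j\leqslant s-1$, and then distribute $\tilde{\nabla}_{\vec{I}}$ by Leibniz; each factor of the resulting sum has at most $s-1$ time derivatives and hence is controlled by the inductive hypothesis. The estimates (\ref{6.105}) and (\ref{6.106}) for $E_m(\varphi)$ and $\nu$ are handled identically, using the fact that $E_m(\varphi)$ is a universal polynomial in $g^{-1}$, $A$, $\nabla\nu$, curvature and their covariant derivatives (cf.\ (\ref{6.20})) and that $\nabla_t\nu=\mathrm{grad}^M E_m(\varphi)$ by (\ref{6.3}).

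The principal obstacle is bookkeeping: one must verify that after every commutation and every Leibniz expansion, each summand contains only factors whose number of $\nabla_t$'s is strictly smaller than $s$ in at least one relevant slot, so that the simultaneous induction on $s$ actually closes. The cleanest way to organize this is to assign to every factor a weighted order equal to the number of $\nabla_t$'s appearing in it and to check, term by term, that the commutation formula (\ref{2.13}) and the expressions for $\nabla_t\varphi$, $\nabla_t\nu$, $\partial_t g^{ij}$, $\partial_t\Gamma^i_{jk}$ each strictly decrease this weight on every resulting factor, thereby guaranteeing induction closure and producing the universal constants $C_{s,l}$ and $C_{s,l,p}$ claimed in the statement.
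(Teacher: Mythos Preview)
Your proposal is essentially the same argument as the paper's: a simultaneous strong induction on the total time order $s$, reducing by one via the evolution identities $\nabla_t\varphi=-E_m(\varphi)\nu$, $\nabla_t\nu=\mathrm{grad}^M E_m(\varphi)$, $\partial_t g^{-1}$, $\partial_t\Gamma$, and Lemma~\ref{lemma6.1}, with commutation formulas handling the mixed orderings and bounded geometry controlling the curvature factors. The paper organizes the inductive step by a case split on whether the innermost block has $|\vec{i}_k|=0$ (then apply the evolution equation directly) or $|\vec{i}_k|\geqslant1$ (then commute one $\nabla_t$ past $\tilde{\nabla}_{\vec{i}_k}$ via \eqref{6.118}--\eqref{6.119} and fall back on the first case), rather than by your global normalization, but the content is the same.

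One small slip to fix: you write the normal form as $\nabla_t^{s}\tilde{\nabla}_{\vec{I}}\varphi$ but then argue as if it were $\tilde{\nabla}_{\vec{I}}\nabla_t^{s}\varphi$ (``write $\nabla_t^{s}\varphi$ as a polynomial \ldots\ then distribute $\tilde{\nabla}_{\vec{I}}$ by Leibniz''). The flow identity is only directly applicable once a $\nabla_t$ sits immediately against $\varphi$, so either normalize with the time derivatives on the \emph{inside}, or, as the paper does, commute just the innermost $\nabla_t$ through $\tilde{\nabla}_{\vec{i}_k}$ and then invoke $\nabla_t\varphi=-E_m(\varphi)\nu$; either way the remainder factors carry at most $s-1$ time derivatives and the induction closes.
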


\begin{proof} We intend to prove the theorem by an inductive argument for $s$.

For the case $s=0$, the above results have been shown. Assume that for all the numbers which are smaller than or equal to $s$, Theorem \ref{theorem6.5} is true.
For $s+1$, without loss of generality we suppose that $a_k\geqslant1$. We need to discuss the following two cases:\medskip

\noindent\textbf{Case \uppercase\expandafter{\romannumeral1}:} $|\vec{i}_k|=0$.

\textbf{(1).} For $\varphi$, we have
\begin{equation}\label{6.109}
\aligned
&\nabla_t^{a_1}\tilde{\nabla}_{\vec{i}_1}\cdots\nabla_t^{a_k}\varphi
=\nabla_t^{a_1}\tilde{\nabla}_{\vec{i}_1}\cdots\nabla_t^{a_{k-1}}\tilde{\nabla}_{\vec{i}_{k-1}}\nabla_t^{a_k-1}(E_m(\varphi)\nu)\\
\thickapprox&\sum\nabla_t^{c_1}\tilde{\nabla}_{\vec{p}_1}\cdots\nabla_t^{c_r}\tilde{\nabla}_{\vec{p}_r}(E_m(\varphi))\cdot\nabla_t^{b_1}
\tilde{\nabla}_{\vec{j}_1}\cdots\nabla_t^{b_l}\tilde{\nabla}_{\vec{j}_l}\nu
\endaligned
\end{equation}
with
\[c_1+\cdots+c_r+b_1+\cdots+b_l=s\]
and
\[(\vec{p}_1,\cdots,\vec{p}_r,\vec{j}_1,\cdots,\vec{j}_l)=\sigma(\vec{i}_1,\cdots,\vec{i}_{k-1}).\]
Using the induction hypotheses $(\ref{6.105})$ and $(\ref{6.106})$, we get
\[
\max\limits_{M_t}|\nabla_t^{a_1}\tilde{\nabla}_{\vec{i}_1}\nabla_t^{a_2}\tilde{\nabla}_{\vec{i}_2}\cdots\nabla_t^{a_k}\varphi|\leqslant C_{s+1,l}.
\]

\textbf{(2).} For $E_m(\varphi)$ we have
\begin{equation}\label{6.110}
\nabla_t^{a_1}\tilde{\nabla}_{\vec{i}_1}\nabla_t^{a_2}\tilde{\nabla}_{\vec{i}_2}\cdots\nabla_t^{a_k}(E_m(\varphi))=\nabla_t^{a_1}
\tilde{\nabla}_{\vec{i}_1}\nabla_t^{a_2}\tilde{\nabla}_{\vec{i}_2}\cdots\nabla_t^{a_k-1}(\frac{\partial E_m(\varphi)}{\partial t}).
\end{equation}
For simplicity, we only calculate a term of the right hand side of  $(\ref{6.20})$. From the definition of $R_2^s(\nabla\varphi,\nu)$, $(\ref{4.4})$ and $(\ref{4.5})$, we have
\begin{equation}\label{6.111}
\frac{\partial}{\partial t}R^{2m+2}_2(\nabla\varphi,\nu)\thickapprox\sum\frac{\partial}{\partial t}[\mathfrak{q}^{l_1}(A)\ast\cdots\ast\mathfrak{q}^{l_{\theta}}(A)\langle(\nabla^{\alpha}R^N)(\omega_1,\cdots,\omega_{\alpha})(\psi,\rho)\delta|\eta\rangle]
\end{equation}
where $\omega_i$, $\psi$, $\rho$, $\delta$, $\eta$ are either $\nabla\varphi$ or $\nu$. We rewrite $(\ref{6.2})$ as
\begin{equation}\label{6.112}
\frac{\partial g^{-1}}{\partial t}=2g^{-1}Ag^{-1}E_m(\varphi)
\end{equation}
and note that
\begin{equation}\label{6.113}
\aligned
&\frac{\partial}{\partial t}\langle(\nabla^{\alpha}R^N)(\omega_1,\cdots,\omega_{\alpha})(\psi,\rho)\delta|\eta\rangle\\
=&\sum\limits_{i=1}^{\alpha}\langle(\nabla^{\alpha}R^N)(\omega_1,\cdots,\nabla_t\omega_i,\cdots,\omega_{\alpha})(\psi,\rho)\delta|\eta\rangle\\
&+\langle(\nabla^{\alpha}R^N)(\omega_1,\cdots,\omega_{\alpha})(\nabla_t\psi,\rho)\delta|\eta\rangle\\
&+\langle(\nabla^{\alpha}R^N)(\omega_1,\cdots,\omega_{\alpha})(\psi,\nabla_t\rho)\delta|\eta\rangle\\
&+\langle(\nabla^{\alpha}R^N)(\omega_1,\cdots,\omega_{\alpha})(\psi,\rho)\nabla_t\delta|\eta\rangle\\
&+\langle(\nabla^{\alpha}R^N)(\omega_1,\cdots,\omega_{\alpha})(\psi,\rho)\delta|\nabla_t\eta\rangle.
\endaligned
\end{equation}
We rewrite $(\ref{6.3})$ as
\begin{equation}\label{6.114}
\nabla_t\nu=\partial E_m(\varphi)g^{-1}\nabla\varphi
\end{equation}
and note that
\begin{equation}\label{6.115}
\nabla_t\nabla\varphi=\nabla\nabla_t\varphi=\nabla(E_m(\varphi)\nu).
\end{equation}
Then, we can transform the derivative with respect to time variable $t$ of
$$\frac{\partial}{\partial t}\langle(\nabla^{\alpha}R^N)(\omega_1,\cdots,\omega_{\alpha})(\psi,\rho)\delta|\eta\rangle$$
into the derivatives with respect to space variables $x^i$. Besides, noting $$\mathfrak{q}^{l_i}(A)\thickapprox\sum\circledast_p\nabla^{j_p}A$$ and using Lemma \ref{lemma6.1}, we can transform $\frac{\partial}{\partial t}\nabla^{j_p}A$ into an expression which only contains the derivatives with respect to space variables $x^i$. For the other remaining terms of $E_m(\varphi)$, we take the same procedure.
In conclusion, $\frac{\partial}{\partial t}E_m(\varphi)$ can be transformed into an expression which contains some derivatives with respect to space variables $x^i$. Hence, the orders of the derivatives with respect to $t$ of $\nabla_t^{a_1}\tilde{\nabla}_{\vec{i}_1}\nabla_t^{a_2}\tilde{\nabla}_{\vec{i}_2}\cdots\nabla_t^{a_k}(E_m(\varphi))$
can be lowered to $s$. It follows from $(\ref{G:2})$ and the induction hypotheses $(\ref{6.104})-(\ref{6.108})$ that
\[\max\limits_{M_t}|\nabla_t^{a_1}\tilde{\nabla}_{\vec{i}_1}\nabla_t^{a_2}\tilde{\nabla}_{\vec{i}_2}\cdots\nabla_t^{a_k}(E_m(\varphi))|\leqslant C_{s+1,l}.\]

\textbf{(3).} For $\nu$, we have
\begin{equation}\label{6.116}
\nabla_t^{a_1}\tilde{\nabla}_{\vec{i}_1}\nabla_t^{a_2}\tilde{\nabla}_{\vec{i}_2}\cdots\nabla_t^{a_k-1}(\nabla_t\nu)
=\nabla_t^{a_1}\tilde{\nabla}_{\vec{i}_1}\nabla_t^{a_2}\tilde{\nabla}_{\vec{i}_2}\cdots\nabla_t^{a_k-1}(\partial E_m(\varphi)g^{-1}\nabla\varphi).
\end{equation}
Using the induction hypotheses $(\ref{6.104})$, $(\ref{6.105})$ and $(\ref{6.108})$, we get
\begin{equation}\label{6.117}
\max\limits_{M_t}|\nabla_t^{a_1}\tilde{\nabla}_{\vec{i}_1}\nabla_t^{a_2}\tilde{\nabla}_{\vec{i}_2}\cdots\nabla_t^{a_k}\nu|\leqslant C_{s+1,l}.
\end{equation}

\noindent\textbf{Case \uppercase\expandafter{\romannumeral2}:} $|\vec{i}_k|\geqslant1$.

Now, we have the following relations of changing the order of the derivatives of $\varphi$
\begin{equation}\label{6.118}
\nabla_t\tilde{\nabla}_{\vec{i}_k}\varphi=
\left\{
\begin{array}{llll}
\aligned
&\tilde{\nabla}_{\vec{i}_k}\nabla_t\varphi,&\,\,\,\,|\vec{i}_k|=1,\\
&\tilde{\nabla}_{\vec{i}_k}\nabla_t\varphi+\sum(\nabla^{\alpha}R^N)(\varphi)(\tilde{\nabla}_{\vec{a}_1}\varphi,\cdots,\tilde{\nabla}_{\vec{a}_{\alpha}}\varphi)
(\tilde{\nabla}_{\vec{c}}\nabla_t\varphi,\tilde{\nabla}_{\vec{b}}\varphi)\tilde{\nabla}_{\vec{e}}\varphi,&\,\,\,\,|\vec{i}_k|\geqslant2.
\endaligned
\end{array}
\right.
\end{equation}
Here $|\vec{b}|\geqslant1$ and $|\vec{e}|\geqslant1$.

For $\nu$ we also have
\begin{equation}\label{6.119}
\nabla_t\tilde{\nabla}_{\vec{i}_k}\nu=\tilde{\nabla}_{\vec{i}_k}\nabla_t\nu+\sum(\nabla^{\alpha}R^N)(\varphi)
(\tilde{\nabla}_{\vec{a}_1}\varphi,\cdots,\tilde{\nabla}_{\vec{a}_{\alpha}}\varphi)(\tilde{\nabla}_{\vec{c}}
\nabla_t\varphi,\tilde{\nabla}_{\vec{b}}\varphi)\tilde{\nabla}_{\vec{e}}\nu
\end{equation}
where $|\vec{b}|\geqslant1$.

Using the induction hypotheses $(\ref{6.104})$ and $(\ref{6.106})$, we infer from $(\ref{6.114})$ and $(\ref{4.1})$ that
\[\max\limits_{M_t}|\nabla_t^{a_1}\tilde{\nabla}_{\vec{i}_1}\nabla_t^{a_2}\tilde{\nabla}_{\vec{i}_2}\cdots\nabla_t^{a_k}\tilde{\nabla}_{\vec{i}_k}\varphi|\leqslant C_{s+1,l}\]
and
\[\max\limits_{M_t}|\nabla_t^{a_1}\tilde{\nabla}_{\vec{i}_1}\nabla_t^{a_2}\tilde{\nabla}_{\vec{i}_2}\cdots\nabla_t^{a_k}\tilde{\nabla}_{\vec{i}_k}\nu|\leqslant C_{s+1,l}.\]

\textbf{(4).} For $A$ we have
\[\partial^{s+1}_t\partial^l\nabla^pA=\partial_t^s\partial^l(\partial_t\nabla^pA).\]
By Lemma \ref{lemma6.1}, $\partial_t\nabla^pA$ can be transformed into an expression without derivative with respect to $t$. So, for $\partial^{s+1}_t\partial^l\nabla^pA$ we can lower the orders of derivatives with respect to $t$ to $s$. Then using induction hypotheses $(\ref{6.104})-(\ref{6.108})$, we get
\[
||\partial_t^{s+1}\partial^l\nabla^pA||_{L^{\infty}(M_t)}\leqslant C_{s+1,l,p}.
\]

\textbf{(5).} For $g^{-1}$ we have
\[\partial_t^{s+1}\partial^lg^{-1}=\partial_t^s\partial^l(\partial_tg^{-1})=\partial_t^s\partial^l(2g^{-1}Ag^{-1}E_m(\varphi)).\]
By the induction hypotheses $(\ref{6.105})$, $(\ref{6.107})$ and $(\ref{6.108})$, it is easy to see that
\[||\partial_t^{s+1}\partial^lg^{-1}||_{L^{\infty}(M_t)}\leqslant C_{s+1,l}.\]
This completes the proof.\end{proof}

Now we are in the position to prove our main theorem (Theorem 1.1).

\begin{proof} By the above discussion we know that the convergence $\varphi(t)\longrightarrow\varphi(T)$ , when $t\rightarrow T$, is in the $C^{\infty}$-topology. Then, using Theorem \ref{theorem 4.1} to restart the flow with $\varphi(T)$ as the initial hypersurface, we get a contradiction to the fact that $[0, T)$ is the maximal interval of existence. This tells us that $\varphi$ is global.
\end{proof}

\vspace{1cm}

Zonglin Jia

\noindent{\small\it Institute of Applied Physics and Computational Mathematics, China Academy of Engineering Physics, Beijing, 100088, P. R. China}

\noindent{\small\it Email: 756693084@qq.com}\\

Youde Wang

\noindent{\small\it College of Mathematics and Information Sciences, Guangzhou University, Guangzhou 510006, People¡¯s Republic of China}\\

\noindent{\small\it Institute of Mathematics, Academy of Mathematics and System Sciences Chinese Academy of Sciences, Beijing 100190, People¡¯s Republic of China}\\

\noindent{\small\it University of Chinese Academy of Sciences, Beijing, People¡¯s Republic of China}\\

\noindent{\small\it Email: wyd@math.ac.cn}

\end{document}